\renewcommand*{\backref}[1]{}
\renewcommand*{\backrefalt}[4]{%
  \ifcase #1 (Not cited.)%
  \or        (Page~#2.)%
  \else      (Pages~#2.)%
  \fi}
\renewcommand{\[}{\begin{equation}\begin{aligned}}
\renewcommand{\]}{\end{aligned} \end{equation}}
\newcommand{\ddb}{\sqrt{-1}\partial\bar\partial}
\newcommand{\ddbar}{\partial\bar\partial}
\newcommand{\ric}{\mathrm{Ric}}
\newtheorem{thm}{Theorem}
\newtheorem{prop}[thm]{Proposition}
\newtheorem{lemma}[thm]{Lemma}
\newtheorem{cor}[thm]{Corollary}
\theoremstyle{remark}
\theoremstyle{definition}
\newtheorem{definition}[thm]{Definition}
\numberwithin{equation}{section}
\numberwithin{thm}{section}
\author{Shih-Kai Chiu}
\address{Mathematical Institute, University of Oxford, Oxford, UK}
\email{Shih-Kai.Chiu@maths.ox.ac.uk}
\author{G\'abor Sz\'ekelyhidi}
\address{Department of Mathematics, University of Notre Dame, Notre Dame, Indiana, USA}
\email{gszekely@nd.edu}
\title{Higher regularity for singular K\"ahler-Einstein metrics}
\date{}
\begin{document}

\begin{abstract}
  We study singular K\"ahler-Einstein metrics that are obtained as
  non-collapsed limits of polarized K\"ahler-Einstein manifolds. Our
  main result is that if the metric tangent cone at a point is locally
  isomorphic to the germ of the singularity, then the metric converges
  to the metric on its tangent cone at a polynomial rate on the level
  of K\"ahler potentials. When the tangent cone at the point has a
  smooth cross section, then the result implies polynomial convergence of the
  metric in the usual sense, generalizing a result due to Hein-Sun. We show that a
  similar result holds even in certain cases where the tangent cone is not
  locally isomorphic to the germ of the singularity. Finally we prove
  a rigidity result for complete $\partial\bar\partial$-exact
  Calabi-Yau metrics with maximal volume growth. This generalizes a result
  of Conlon-Hein, which applies to the case of asymptotically conical manifolds. 
\end{abstract}

\maketitle

\section{Introduction}
Since the celebrated work of Yau~\cite{Yau} on the existence of
K\"ahler-Einstein metrics there has been increasing interest in the
understanding of singular K\"ahler-Einstein metrics. An early result
in this direction is Kobayashi~\cite{Ko} on orbifold K\"ahler-Einstein
metrics, while a definitive existence result for a large class of
singularities was obtained by
Eyssidieux-Guedj-Zeriahi~\cite{EGZ}. These works focus on the case of
non-positive Ricci curvature, however recently Li-Tian-Wang~\cite{LTW}
extended Chen-Donaldson-Sun's solution~\cite{CDS0,CDS1,CDS2,CDS3} of
the Yau-Tian-Donaldson conjecture to general $\mathbf{Q}$-Fano
varieties. As a result we now have several sources of singular
K\"ahler-Einstein manifolds on normal varieties.

For applications it
is desirable to have control of the geometry of these singular metrics
near the singularities, but so far little is known in general. The
main progress in this direction is due to Hein-Sun~\cite{HS}, who
showed that near a large class of smoothable isolated singularities
that are locally isomorphic to a Calabi-Yau cone, the singular
Calabi-Yau metric must be asymptotic in a strong sense to the
Calabi-Yau cone metric. Recently an analogous result was shown by
Datar-Fu-Song~\cite{DFS} in the case of isolated log canonical
singularities using the bounded geometry method, and precise
asymptotics were obtained shortly after by Fu-Hein-Jiang~\cite{FHJ}. In
more general settings the best results so far give some control of the
K\"ahler potential, such as the work of
Guedj-Guenancia-Zeriahi~\cite{GGZ} showing continuity.

Our main result in this paper extends the work of Hein-Sun~\cite{HS}
to a large class of non-isolated singularities. In order to state the
result, let us suppose that $(Z, p)$ is the non-collapsed pointed
Gromov-Hausdorff limit of a sequence of complete polarized
K\"ahler-Einstein manifolds $(M_i, g_i, p_i)$, satisfying
$\mathrm{Ric}(g_i) = \lambda_i g_i$ with $|\lambda_i|\leq 1$. The
results of Donaldson-Sun~\cite{DS14,DS17}, Li-Xu~\cite{LX} and
Li-Wang-Xu~\cite{LWX} imply that $Z$ is a normal complex variety
admitting a singular K\"ahler-Einstein metric $\omega_Z$, and the
metric tangent cone $Z_p$ at $p$ is homeomorphic to a normal affine
variety uniquely determined by the germ $(Z, p)$. The tangent cone
$Z_p$ admits a singular Ricci flat cone metric $\omega_{Z_p}$. Our
first result is the following.
\begin{thm}\label{thm:main}
  Suppose that the germ $(Z_p, o)$ is biholomorphic to $(Z,p)$, where
  $o$ denotes the vertex of the cone $Z_p$. Then for some $r_0 > 0$
  there exists a biholomorphism $\phi : B(o,r_0) \to U$ from the unit
  ball in $Z_p$ to a neighborhood of $p\in Z$ with $\phi(o) = p$
  satisfying the following. There are constants $C, \alpha > 0$ and
  functions $u_r$ on $B(o,r)$ for $0 < r < r_0$, satisfying
  \[ \phi^*\omega_Z = \omega_{Z_p} + \ddb u_r \]
  on the smooth locus of $Z_p$, and
  \[ \sup_{B(o, r)} |u_r| \leq C r^{2+\alpha} \]
  for all $0 < r < r_0$.
\end{thm}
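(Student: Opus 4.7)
My plan is to adapt the three-annulus / frequency-type argument of Hein-Sun to the singular setting, working on the cone $Z_p$ itself and using pluripotential methods that remain valid across the persistent singular strata away from the vertex. The first step is to set up a potential: using the given germ biholomorphism, adjusted by composition with an automorphism of the tangent cone so as to match the Donaldson-Sun identification to first order at $o$, both $\phi^*\omega_Z$ and $\omega_{Z_p}$ become closed positive currents on $B(o,r_0)$ with bounded local K\"ahler potentials on the smooth locus. The $\ddbar$-lemma on a Stein neighborhood of $o$ then produces a bounded function $u_r$ with $\phi^*\omega_Z - \omega_{Z_p} = \ddb u_r$, normalized by subtracting its average on a sphere of radius of order $r$. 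The fact that $(Z_p,o)$ is actually the metric tangent cone of $(Z,p)$, combined with the Donaldson-Sun convergence of rescaled balls, translates at the level of potentials into the initial decay $\sup_{B(o,r)}|u_r| = o(r^2)$ as $r \to 0$.

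Next I translate the K\"ahler-Einstein condition into a PDE for $u_r$. Pulling back by $\phi$ and comparing with the Ricci-flat cone metric gives a complex Monge-Amp\`ere equation
\[ (\omega_{Z_p} + \ddb u_r)^n = e^{F + \lambda u_r}\omega_{Z_p}^n \]
on the smooth locus, where $\lambda\in\{-1,0,1\}$ is the Einstein constant and $F$ records the discrepancy between the holomorphic volume forms of $Z$ and $Z_p$ under $\phi$; because $\phi$ matches the structures to first order at $o$, $F$ vanishes to positive order there. The linearization at $u_r=0$ is $\Delta_{\omega_{Z_p}}$, and all remaining terms are either at least quadratic in $\ddb u_r$ or carry a positive power of $r$ through $F$.

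I then upgrade the $o(r^2)$ estimate to $O(r^{2+\alpha})$ by a three-annulus contradiction argument on dyadic balls. If the conclusion fails, there are scales $r_k\to 0$ with $r_k^{-2-\alpha}\sup_{B(o,r_k)}|u_r|\to\infty$; rescaling $\tilde u_k = c_k^{-1}\lambda_{r_k}^* u_r$ with $c_k = \sup_{B(o,r_k)}|u_r|$, the dilation invariance $\lambda_r^*\omega_{Z_p} = r^2\omega_{Z_p}$ together with regularity estimates on the smooth locus yields a subsequential limit $u_\infty$ defined on all of $Z_p$, nonzero on the unit annulus, and satisfying the linearized equation $\Delta_{\omega_{Z_p}}u_\infty=0$ (the nonlinear and $F$-terms scale away with extra powers of $r_k$). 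After absorbing degree-$\leq 2$ pluriharmonic pieces by adjusting $\phi$ and the normalization of $u_r$, what remains is a nonzero harmonic function on $Z_p$ whose polynomial growth rate lies strictly between $2$ and $2+\alpha$; for $\alpha$ smaller than the indicial gap of $\Delta_{\omega_{Z_p}}$ above the eigenvalue corresponding to homogeneity $2$, no such function exists, yielding the contradiction.

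The main analytic obstacle is precisely this spectral / indicial gap on the singular cone: harmonic functions on $Z_p$ must have discrete homogeneity degrees with no accumulation immediately above $2$. On smooth Sasaki-Einstein cones this reduces to separation of variables on the link, but here the link $Z_p\cap\{r=1\}$ is itself singular. I expect to establish the gap by combining the RCD structure of the link (yielding a discrete Laplace spectrum and eigenspace decomposition) with the algebraic description of $Z_p$ as a normal affine variety carrying the Reeb action, so that harmonic functions decompose into weight spaces of rational degree classifiable in terms of the coordinate ring. A subsidiary technical point is the passage from uniform $L^\infty$ bounds on $u_r$ to genuine convergence of $u_\infty$ and its $\ddb$ through the singular strata, which I expect to handle via pluripotential comparison and capacity estimates on the smooth locus.
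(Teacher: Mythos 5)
Your outline is in the Hein--Sun spirit, and parts of it (blow-up to a harmonic function on the cone, spectral gap above degree $2$, adjusting the identification of germs) do correspond to ingredients of the paper. But the step you relegate to a ``subsidiary technical point'' is in fact the crux, and your proposed tool for it would not work. In the blow-up/three-annulus argument the rescaled potentials converge to a harmonic function only locally smoothly on the regular locus of $Z_p$, whereas the conclusion requires a sup bound on $|u_r|$ over the whole ball $B(o,r)$; here the singular set of the cone is in general \emph{not} isolated (this is precisely the new case beyond Hein--Sun), and nothing in your argument rules out the potential at scale $r_k$ being of size comparable to $\sup_{B(o,r_k)}|u_{r_k}|$ in a shrinking neighborhood of the singular set while converging to the small limit $u_\infty$ away from it. Capacity/pluripotential estimates control plurisubharmonic-type functions off sets of small capacity; they do not yield the required $L^\infty$ bound up to the singular set. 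The paper's main new ingredient is exactly this non-concentration estimate, Theorem~\ref{thm:nonconc}, which bounds $\sup_{B(p,1/2)}|u|$ by $\sup_{\{r_h>\delta\}}|u|+\sup|f|+\gamma\sup|u|$; its proof uses barrier functions adapted to cones $\mathbf{C}^k\times C(Y)$ together with an induction on the dimension of the splitting Euclidean factor, and it exploits the Monge--Amp\`ere structure (the inequality $\sum_i\mu_i+\mu_{\max}<-1/10$ for the barrier), not merely harmonicity of the limit. Without an ingredient of this kind your iteration stalls at the passage from regular-locus convergence to decay of the sup norm on smaller balls.

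A second gap concerns the degree-$2$ modes. On a general Ricci-flat K\"ahler cone the $2$-homogeneous harmonic functions are not all pluriharmonic: by Lemma~\ref{lemma:HS} they split as a pluriharmonic part plus a $J(r\partial_r)$-invariant part whose gradient generates an automorphism commuting with scaling. The pluriharmonic part can be subtracted from the potential without changing the metric, but the automorphism part cannot; your phrase ``absorbing degree-$\leq 2$ pluriharmonic pieces by adjusting $\phi$'' conflates the two. Handling the automorphism modes forces one to change the reference metric (equivalently compose with a cone automorphism) at \emph{every} dyadic scale and to show the accumulated corrections form a convergent series, so that a single biholomorphism works for all $r$; this is the content of the family of model metrics $\omega_h$, the decay step Proposition~\ref{prop:decay}, and the iteration Proposition~\ref{prop:convergence}, and a one-shot contradiction argument with a fixed $\phi$ does not produce it. Relatedly, your starting point --- that the germ isomorphism can be corrected so that $\phi^*\omega_Z-\omega_{Z_p}$ has potential $o(r^2)$ and volume-form discrepancy of positive order --- is asserted rather than argued; in the paper this is Proposition~\ref{prop:spec1}, which needs the identification of the K-semistable valuation (Li--Xu, Li--Wang--Xu), Donaldson--Sun adapted bases, and a Luna-slice correction to land the embeddings in $C(Y)$. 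By contrast, the spectral/indicial gap above degree $2$ that you single out as the main obstacle is the comparatively routine part, following from the discreteness of homogeneities coming from the algebraic weight decomposition.
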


Hein-Sun~\cite{HS} consider the case of singular Calabi-Yau metrics
where the tangent cone $Z_p$ has an isolated singularity at the
vertex, and in addition is ``strongly regular''. Most likely the
approach of Hein-Sun can be extended to the more general
K\"ahler-Einstein setting, without the strongly regular assumption, by
appealing to the more recent works \cite{LX, LWX}. On the other hand
their approach uses that the tangent cone $Z_p$ has a smooth cross
section in an essential way, since they rely on analysis in weighted
H\"older spaces. The main novelty in our approach is that by working
on the level of $L^\infty$-bounds for the K\"ahler potential, we are
able to treat tangent cones with arbitrary singular sets. We can then
obtain estimates for derivatives of the metric away from the singular
set, which in particular can be used to recover Hein-Sun's result in
the setting of tangent cones with isolated singularities (see
Corollary~\ref{cor:isolated}).

When the germ of the tangent cone $(Z_p, o)$ is not biholomorphic to
$(Z,p)$, then the situation is more complicated, and has not been
considered before. A family of examples given in \cite{Sz19} (also
Hein-Naber~\cite{HN}), are the hypersurfaces
$A_{p-1} \subset \mathbf{C}^{n+1}$ defined by
\[ \label{eq:Ap-1} z^p + x_1^2 + \ldots + x_n^2=0, \]
where $p > 2\frac{n-1}{n-2}$. 
In \cite{Sz19} the second author constructed a Calabi-Yau metric
$\omega_{A_{p-1}}$ on a neighborhood of $0\in A_{p-1}$, with tangent cone given by
$\mathbf{C}\times A_1$, where $A_1\subset \mathbf{C}^n$ is defined by
$x_1^2 + \ldots + x_n^2 = 0$ and is equipped with the Stenzel cone
metric. Our result in this case is the following.
\begin{thm}\label{thm:main2}
  Suppose that, as above, $(Z,p)$ is the pointed
  Gromov-Hausdorff limit of a non-collapsing sequence of polarized
  K\"ahler-Einstein manifolds, with singular K\"ahler-Einstein metric
  $\omega_Z$. Suppose that the germ $(Z,p)$ is 
  isomorphic to the germ $(A_{p-1},0)$ at the origin.
  Then for some $r_0 > 0$ there is a biholomorphism
  $\phi : B(0, r_0) \to U \subset Z$, with $\phi(0)=p$, and constants
  $\Lambda, C, \alpha > 0$, such that
  \[ \phi^*\omega_Z = \Lambda \omega_{A_{p-1}} +
    \sqrt{-1}\partial\bar\partial u_r\]
  for some $u_r$ defined on $B(0, r)$, and
  \[ \sup_{B(0, r)} |u_r| \leq C r^{2+\alpha} \]
  for all $r < r_0$. 
\end{thm}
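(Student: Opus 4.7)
The plan is to adapt the strategy used for Theorem~\ref{thm:main}, replacing the tangent cone metric $\omega_{Z_p}$ by the explicit Calabi-Yau model metric $\omega_{A_{p-1}}$ constructed in \cite{Sz19}. The assumption that $(Z,p)$ is isomorphic to $(A_{p-1},0)$ provides the biholomorphism $\phi$, and pulling back gives, on a neighborhood of $0 \in A_{p-1}$, two singular K\"ahler-Einstein metrics $\phi^*\omega_Z$ and $\Lambda\omega_{A_{p-1}}$ whose tangent cones at $0$ both agree with the product cone $\mathbf{C}\times A_1$, once the scalar $\Lambda$ is chosen so that the asymptotic volume densities match.

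First I would produce the potentials $u_r$. A local $\ddb$-lemma on the germ $(A_{p-1},0)$, applied to the difference of two singular Ricci-flat K\"ahler forms that agree with the same cone metric to leading order, yields $u_r$ on $B(0,r)$, unique up to an additive constant which we normalize. An initial rough bound $\sup_{B(0,r)} |u_r| = o(r^2)$ follows from the fact that, after rescaling to unit scale, both metrics converge to $\omega_{\mathbf{C}\times A_1}$ in $C^0_{\mathrm{loc}}$ on the smooth locus, forcing their local potentials to agree to leading order.

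To upgrade this initial decay to the sharp rate $O(r^{2+\alpha})$, I would run the three-annulus blow-up iteration that underlies Theorem~\ref{thm:main}. Assuming by contradiction that the optimal decay rate $\beta \in (2,\,2+\alpha)$ fails to improve, one rescales $u_{r_k}$ on shrinking annuli to have unit $L^\infty$-norm and extracts a subsequential limit $u_\infty$ on $\mathbf{C}\times A_1$. This limit satisfies the corresponding linearized equation and is essentially pluriharmonic on the smooth locus of the cone, with polynomial growth of order $\beta$. Separating variables in the $\mathbf{C}$ factor reduces the problem to the spectrum of the Laplacian on the link of $A_1$, whose relevant indicial roots are discrete; the constant $\alpha$ in the theorem is then taken below the first gap above $2$ in the admissible homogeneity degrees, producing the contradiction.

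The main obstacle is the spectral and rigidity analysis on the product cone $\mathbf{C}\times A_1$. Unlike the isolated-singularity setting of Hein-Sun, this tangent cone has an entire singular line $\mathbf{C}\times\{o\}$, so weighted H\"older methods are unavailable and the iteration must be carried out at the $L^\infty$ level, as in Theorem~\ref{thm:main}. One then has to identify and rule out nontrivial $2$-homogeneous modes on $\mathbf{C}\times A_1$, including those inherited from the $\mathbf{C}$ factor; the point is that such modes correspond to genuine changes of the leading-order geometry, which are ruled out once $\phi$ and $\Lambda$ have been fixed to match tangent cones. A secondary difficulty is that $\omega_{A_{p-1}}$ itself is known to approach $\mathbf{C}\times A_1$ only at a specific polynomial rate, so $\alpha$ in the conclusion must be chosen no larger than that rate and compatibly with the spectral gap above.
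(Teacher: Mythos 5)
Your overall strategy (compare $\phi^*\omega_Z$ to the model $\omega_{A_{p-1}}$ at the level of potentials and run an $L^\infty$ iteration across the singular set, powered by the non-concentration machinery behind Theorem~\ref{thm:main}) is the right one, but two steps as you describe them would fail. First, your treatment of the $2$-homogeneous modes is not correct: the harmonic functions on $\mathbf{C}\times A_1$ of degree exactly $2$ that are not pluriharmonic generate automorphisms/scalings, and they cannot be ``ruled out once $\phi$ and $\Lambda$ have been fixed to match tangent cones.'' Fixing the tangent cone does not kill these modes, because every member of the scaling/automorphism family has the same tangent cone; in particular matching ``asymptotic volume densities'' determines nothing, since all rescalings $\Lambda\omega_{A_{p-1}}$ have identical density at $0$. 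The paper's resolution is structural: one must work with a whole family of model metrics $\omega_h=e^{-\beta}g_h^*\omega_{A_{p-1}}$ parametrized by the quadratic harmonic functions $h\in H$ whose gradients lift to holomorphic vector fields preserving the hypersurfaces $\{cz^p+\sum x_i^2=0\}$ (Lemma~\ref{lemma:H}, Lemma~\ref{lemma:unstablemodel}, verifying the axioms of Definition~\ref{defn:model}), and in each step of the decay iteration (Proposition~\ref{prop:decay}) the degree-$2$ automorphism part of the blow-up limit is absorbed by moving to a nearby model metric, with pluriharmonic parts subtracted via the holomorphic embedding. Only after the iteration converges (Proposition~\ref{prop:convergence}) is the limiting automorphism and scale folded into $\phi$ and $\Lambda$. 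With a single fixed model and a fixed $\Lambda$, the three-annulus contradiction argument you outline breaks precisely at the degree-$2$ modes, since they decay no faster than $r^2$.

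Second, your ``initial rough bound $\sup_{B(0,r)}|u_r|=o(r^2)$'' is not free. It is not a consequence of both metrics having tangent cone $\mathbf{C}\times A_1$; one must show that the metric $\omega_Z$, viewed through the germ isomorphism, is asymptotic to a specific member of the scaling family, i.e.\ pin down the parameter $a_i$ in $\{a_iz^p+\sum x_i^2=0\}$ at each scale. In the paper this is Proposition~\ref{prop:spec2}: it requires identifying the valuation $d_{KE}$ induced by $\omega_Z$ with the valuation attached to $\omega_{A_{p-1}}$ (via Li--Xu and Li--Wang--Xu), running the Donaldson--Sun adapted-bases construction to get holomorphic embeddings of the rescaled balls landing in the hypersurfaces $X_i$, and then deriving the potential bound $|u_i|<\Psi(i^{-1})$, the distance comparison, and the $|\nabla f_i|$ bound needed for Definition~\ref{defn:modelapprox}. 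Your local $\ddb$-lemma plus $C^0_{\mathrm{loc}}$ convergence on the smooth locus does not produce these quantitative inputs, and without them neither Lemma~\ref{lemma:ghestimate} nor Proposition~\ref{prop:decay} can be started. Your remaining points (the $L^\infty$-level iteration because of the non-isolated singular line, the discreteness of growth rates giving $\alpha$, and the need for $\alpha$ to be compatible with the convergence rate of $\omega_{A_{p-1}}$ to its cone) are consistent with the paper, but the two gaps above are essential to the argument.
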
 

In other words the singular K\"ahler-Einstein metric $\omega_Z$
converges to a suitable scaling of the model metric $\omega_{A_{p-1}}$
at a polynomial rate, at the level of potentials. Note that in
contrast with Theorem~\ref{thm:main}, where the model metrics were
cones, here the rescalings of $\omega_{A_{p-1}}$ are not isometric to
each other. In general we expect that for more complicated
singularities it is possible to have higher dimensional families of
model metrics, similarly to how in the first author's
thesis~\cite{ChiuThesis} a two dimensional family of complete Ricci
flat K\"ahler metrics was constructed on $\mathbf{C}^3$ with tangent
cone $\mathbf{C} \times A_2$ at infinity.

Our last result is the following uniqueness theorem for
solutions of the Monge-Amp\`ere equation on complete manifolds.
\begin{thm}
  \label{thm:main3}
  Let $(X,\omega)$ be a $\partial\bar\partial$-exact Calabi-Yau
  manifold with maximal volume growth. Suppose that $u$ is a smooth
  solution of the complex Monge-Amp\`ere equation
  \[ (\omega + \ddb u)^n = \omega^n. \]
  In addition suppose that $u$ has subquadratic growth in the sense
  that $|u|\leq C(1 + r)^{2-\delta}$ for some $C, \delta > 0$, where $r$
  is the distance from a fixed point in $X$. Then $\ddbar u=0$. 
\end{thm}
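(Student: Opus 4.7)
The plan is to reduce the complex Monge--Amp\`ere equation to a linear elliptic equation of divergence form, and then combine an integration-by-parts (Caccioppoli-type) argument with the subquadratic growth of $u$ and the maximal volume growth of $(X,\omega)$.

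First I would observe the algebraic factorization
\[ (\omega+\ddb u)^n - \omega^n = \ddb u \wedge T, \qquad T := \sum_{k=0}^{n-1}(\omega+\ddb u)^k\wedge \omega^{n-1-k}, \]
so that the Monge--Amp\`ere equation becomes the linear equation $\ddb u\wedge T=0$. The form $T$ is a strictly positive $(n-1,n-1)$-form, and it is $d$-closed because both $\omega$ and $\omega+\ddb u$ are. Taking the trace with respect to $\omega$ and invoking the arithmetic--geometric mean inequality (using $(\omega+\ddb u)^n=\omega^n$) gives $\Delta_\omega u=\mathrm{tr}_\omega(\omega+\ddb u)-n\geq 0$, so $u$ is $\omega$-subharmonic, and symmetrically $-u$ is $(\omega+\ddb u)$-subharmonic.

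Next, for a cutoff $\chi_R$ equal to $1$ on $B(R)$, vanishing outside $B(2R)$, with $|d\chi_R|\leq 2/R$, I would multiply $\ddb u\wedge T=0$ by $\chi_R^2 u$ and integrate by parts---which is legitimate precisely because $dT=0$---to obtain
\[ \int_X \chi_R^2\, i\partial u \wedge \bar\partial u\wedge T \;\leq\; 4\int_X u^2\, i\partial \chi_R\wedge \bar\partial \chi_R\wedge T. \]
Using the hypothesis $|u|\leq C(1+r)^{2-\delta}$, the gradient bound $|d\chi_R|\leq 2/R$, maximal volume growth of $(X,\omega)$, and an upper bound on $\int_{B(2R)}\omega\wedge T$ obtained by integration by parts from the global potential $\phi$ with $\omega=\ddb\phi$, the right-hand side is controlled by $CR^{2n+2-2\delta}$.

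This scale-invariant estimate is not by itself small, but it sets up a dyadic iteration. The crucial point is that, since $\lambda^{-2}u(\lambda\cdot)\to 0$ locally uniformly as $\lambda\to\infty$, the metrics $\omega$ and $\omega+\ddb u$ share the same tangent cone at infinity. On a Calabi--Yau cone, subquadratic solutions of the linearized equation $\ddb v\wedge T_\infty=0$ split off a pluriharmonic part by separation of variables, so after subtracting at each dyadic scale an appropriate pluriharmonic approximation of $u$ (which does not affect $\ddb u$) and re-running the energy estimate, one obtains a strictly improved decay rate $|u|\leq Cr^{2-\delta'}$ with $\delta'>\delta$. Iterating drives $u$ to be bounded, at which point a Yau-type Liouville theorem for $\omega$-subharmonic functions on complete Ricci-flat K\"ahler manifolds of maximal volume growth forces $u$ to be constant, hence $\ddb u=0$. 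The main obstacle is the dyadic iteration: in contrast to the asymptotically conical setup of Conlon--Hein, the tangent cone at infinity may be singular with a non-smooth link, so both the classification of subquadratic pluriharmonic approximations on the cone and the quantitative comparison of $T$ to its model on the cone require care, and will be handled through the Cheeger--Colding-type and pluripotential-theoretic machinery developed in the earlier parts of the paper.
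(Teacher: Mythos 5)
Your overall strategy (subtract pluriharmonic pieces scale by scale and let the subquadratic growth beat a small loss per scale) is the right one and is the paper's, but two steps in your write-up are genuinely gapped. First, the Caccioppoli estimate does no real work: the bound $\int \chi_R^2\, i\partial u\wedge\bar\partial u\wedge T \lesssim R^{2n+2-2\delta}$ is exactly the scale-invariant size predicted by $|u|\lesssim r^{2-\delta}$, and you never explain how an $L^2$ energy bound of the expected size converts into the improved sup-norm decay at the next scale. The actual improvement mechanism is the quantitative decay lemma (Lemma~\ref{lemma:cydecay} in the paper): on a ball GH-close to the tangent cone, a small solution of the Monge--Amp\`ere equation is $C^0$-approximated by a harmonic function on the cone; since strictly subquadratic homogeneous harmonic functions on such a cone are pluriharmonic (Lemma~\ref{lemma:HS}(1)) and, by $\ddbar$-exactness and the Donaldson--Sun-type affine embedding, extend to pluriharmonic functions on $X$, one subtracts that piece and gets $\sup_{B(\lambda)}|u'|\le \lambda^{2-\alpha}\sup_{B(1)}|u|$; the passage across the singular set of the cone is exactly where the non-concentration theorem (Theorem~\ref{thm:nonconc}) is needed. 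You gesture at this machinery but your energy estimate neither replaces it nor feeds into it, so the key quantitative step is missing.

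Second, the endgame is wrong as stated. There is no ``Yau-type Liouville theorem'' for bounded $\omega$-subharmonic functions on complete Ricci-flat manifolds of maximal volume growth (bounded nonconstant subharmonic functions exist already on $\mathbf{C}^n$, $n\ge 2$); Yau's results concern harmonic or $L^p$ subharmonic functions. A Liouville theorem for \emph{bounded solutions of the Monge--Amp\`ere equation} in this generality is essentially the statement being proved (a bounded solution is in particular subquadratic), so invoking it is circular. The paper avoids any such step: one rescales $\omega_i=2^{-2i}\omega$, so the rescaled potential has size $O(2^{-i\delta})$ at scale $2^i$, applies the decay lemma $k$ times descending from scale $2^{i_0+km}$ to the fixed scale $2^{i_0}$, and the accumulated correction $v_k$ on the fixed ball is bounded by $2^{km(\alpha-\delta)}C'$, which tends to $0$ because one may take $\alpha<\delta$; smooth convergence (Lemma~\ref{lemma:savin}) then gives $\ddbar u=0$ on $B(o,2^{i_0})$ directly, hence on $X$. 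If you insist on your outward formulation (``improve the decay exponent of $u$ itself''), you would additionally have to show that the pluriharmonic functions subtracted at the different dyadic scales are compatible and summable to a single global pluriharmonic correction, which your sketch does not address; the inward iteration sidesteps this entirely.
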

This result should be compared with the uniqueness result in
Conlon-Hein~\cite[Theorem 3.1]{CH}. The main novelty is that in our
result we do not need to assume that the tangent cone of $X$ at
infinity has an isolated singularity, which is implied by the AC
assumption of \cite{CH}. Note, however, that the $\ddbar$-exactness is
not required in \cite{CH}. 

The main new technical ingredient in the proofs of these theorems
is an estimate for
solutions of the complex Monge-Amp\`ere equation on non-collapsed
balls in polarized K\"ahler manifolds with Ricci curvature bounds, or
their Gromov-Hausdorff limits. This extends a related estimate from
\cite{Sz20}, where we considered balls that are Gromov-Hausdorff close
to a metric cone of the form $\mathbf{C}\times C(Y)$, with smooth
$Y$. Roughly speaking the result says that if a solution $u$ of a
Monge-Amp\`ere equation with sufficiently small $L^\infty$ norm
concentrates near the (almost) singular set of such a ball, then the
solution must decay by a definite amount when passing to a smaller
ball.  This is the key ingredient for showing that ``small'' solutions of
the Monge-Amp\`ere equation are modeled on harmonic functions
We will discuss this estimate in Section~\ref{sec:nonc}  and we
expect it to be of independent interest.

In Section~\ref{sec:decay} we define
the notion of families of model metrics as well as a
convergence result for the singular K\"ahler-Einstein metric
$\omega_Z$ that can be approximated by these model metrics near the
singularities. This unifies certain aspects of Theorems~\ref{thm:main}
and \ref{thm:main2}. We then prove these theorems by showing the existence
of families of model metrics and the existence of approximations in
the corresponding cases in Sections~\ref{sec:polystable} and
\ref{sec:unstable}.  Finally, in Section~\ref{sec:max} we prove
Theorem~\ref{thm:main3}. \newline

\noindent{\bf Acknowledgments.} We would like to thank Hans-Joachim
Hein for helpful discussions. SC is supported by the Simons
Collaboration on Special Holonomy in Geometry, Analysis, and Physics
(\#724071 Jason Lotay), and he would like to thank the National Center
for Theoretical Sciences for their hospitality during Summer 2021. GSz
is supported in part by NSF grant DMS-1906216.

\section{Non-concentration}
\label{sec:nonc}
In this section we study the complex Monge-Amp\`ere equation on a ball
in a non-collapsed Gromov-Hausdorff limit of K\"ahler-Einstein
manifolds. More precisely, let $(Z, p)$ be the pointed
Gromov-Hausdorff limit of a sequence of complete pointed
K\"ahler manifolds $(M_i, g_i, p_i)$. We assume that the $(M_i, g_i)$
are polarized, i.e. the K\"ahler forms are given by the curvature of line bundles
over the $M_i$, that the metrics are Einstein, i.e. $\mathrm{Ric}(g_i)
= \lambda_i g_i$ for some $|\lambda_i| \leq 1$, and that the
non-collapsing condition $\mathrm{vol}(B_{g_i}(p_i, 1)) > \nu > 0$ holds
for a fixed $v > 0$. By the results of Donaldson-Sun~\cite{DS14,DS17},
$B(p,2)$ is a normal algebraic variety,
and the metric singular set coincides with the algebro-geometric
singular set $\Sigma\subset B(p,2)$. For $q\in B(p,1)$ let us denote
by $r_h(q)$ the harmonic radius at $q$, setting $r_h(q) = 0$ for $q\in
\Sigma$. We denote the limit metric on the regular part of $Z$ by
$\omega$. The main result of this section is the following estimate
for solutions of the complex Monge-Amp\`ere equation on $B(p,1)$. 

\begin{thm}\label{thm:nonconc}
  There is a constant $C= C(n, \nu)$, such that for all $\gamma > 0$
  there exist $\kappa, \delta > 0$ depending on $n, \nu, \gamma$ with
  the following property. Suppose that we have smooth functions $u, f$
  on $B(p,1)\setminus \Sigma$, satisfying $|u|, |f| <\kappa$, and
  \[ \label{eq:CMA} (\omega + \ddb u)^n = e^f \omega^n. \]
  Then
  \[\label{eq:nc10} \sup_{B(p,1/2)} |u| \leq C \left( \sup_{\{r_h > \delta\} \cap
        B(p,1)} |u| + \sup_{B(p,1)} |f| + \gamma \sup_{B(p,1)} |u|
    \right). \]
\end{thm}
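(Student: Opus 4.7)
The plan is to argue by contradiction. Suppose the estimate fails for some fixed $\gamma > 0$ and every choice of $\kappa, \delta$. Then for each $i$ we can find $\kappa_i, \delta_i \to 0$, a ball $B(p_i, 1)$ in a non-collapsed polarized Kähler-Einstein limit $Z_i$, and smooth functions $u_i, f_i$ on $B(p_i, 1) \setminus \Sigma_i$ satisfying \eqref{eq:CMA} with $|u_i|, |f_i| \leq \kappa_i$, yet violating \eqref{eq:nc10}. Set $M_i = \sup_{B(p_i, 1)} |u_i|$ and $v_i = u_i / M_i$; then $|v_i| \leq 1$, and the failure of \eqref{eq:nc10} translates into
\[
\sup_{\{r_h > \delta_i\}} |v_i| \to 0, \qquad \frac{\sup |f_i|}{M_i} \to 0, \qquad \sup_{B(p_i, 1/2)} |v_i| > C\gamma > 0.
\]
By Donaldson-Sun compactness, along a subsequence $(Z_i, p_i)$ converges in pointed Gromov-Hausdorff topology to a limit $(Z, p)$ of the same type.

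The smallness of $M_i$ means that on the regular part, \eqref{eq:CMA} linearizes to
\[
\Delta v_i = \frac{e^{f_i} - 1}{M_i} + O(M_i |\nabla^2 v_i|^2),
\]
and interior Evans-Krylov and Schauder estimates give uniform bounds on $v_i$ in $C^{k}$ on compact subsets of the regular part. Thus $v_i$ converges locally smoothly on the regular part of $B(p, 1)$ to a bounded harmonic limit $v$. Since any open region $\{r_h > \delta_0\}$ is contained in $\{r_h > \delta_i\}$ for large $i$, the first condition above forces $v \equiv 0$ on the entire regular part.

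The heart of the argument is to show that $\sup_{B(p_i, 1/2)} |v_i|$ itself tends to zero, contradicting the lower bound $C\gamma$. This requires ruling out concentration of $|v_i|$ on the (nearly) singular set, which I would do by combining two pieces. First, the codimension-four estimate for non-collapsed Kähler-Einstein limits (Donaldson-Sun, together with the quantitative stratification of Cheeger-Jiang-Naber) bounds the Minkowski content of $\{r_h \leq \delta\} \cap B(p_i, 1)$ by a quantity going to zero as $\delta \to 0$, uniformly in $i$; combined with $|v_i| \leq 1$ and the uniform convergence of $v_i$ to zero on any fixed $\{r_h > \delta_0\}$, this gives $\|v_i\|_{L^2(B(p_i, 1))} \to 0$ provided $\delta_i \to 0$ is chosen slowly enough. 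Second, I would establish a Moser-iteration bound
\[
\sup_{B(p_i, 1/2)} |v_i| \leq C_1 \|v_i\|_{L^2(B(p_i, 1))} + C_1 \frac{\sup |f_i|}{M_i},
\]
by lifting $v_i$ to smooth approximating polarized Kähler-Einstein manifolds (via a Donaldson-Sun style pluri-anticanonical embedding) where the equation becomes genuinely smooth and uniformly elliptic as $M_i \to 0$, running the standard iteration there, and passing to the limit.

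The main obstacle is the Moser-iteration step: because $v_i$ is only defined on the regular part, there is no direct mean-value inequality available on the singular limit itself, and one must be careful passing through the smooth approximations, tracking the test functions and Sobolev constants as the approximation parameter varies. The smallness of $M_i$ is what makes this work, since it guarantees that in smooth local charts on the approximating manifolds the linearized Monge-Ampère operator is a uniformly elliptic small perturbation of the Laplacian, so the Moser constants stay uniform, the $L^2$-to-$L^\infty$ passage survives the limit, and combined with the $L^2$-decay it yields the contradiction.
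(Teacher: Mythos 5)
Your proposal has two genuine gaps, one logical and one analytic. The logical one: if the estimate \eqref{eq:nc10} fails along a sequence with a \emph{fixed} constant $C$, dividing by $M_i$ only yields $\sup_{\{r_h>\delta_i\}}|v_i|\le 1/C$ and $\sup|f_i|/M_i\le 1/C$, not the convergence to zero that your compactness argument needs to force the harmonic limit $v\equiv 0$; and if instead you let the constant blow up along the sequence, the statement you end up proving is vacuous, because as soon as $C\ge 1/\gamma$ the term $C\gamma\sup_{B(p,1)}|u|$ alone dominates $\sup_{B(p,1/2)}|u|$ and \eqref{eq:nc10} holds trivially. The entire content of the theorem is the uniformity of $C$ in $\gamma$, so the contradiction wrapper as set up cannot produce it. One could drop the wrapper and argue directly (volume of $\{r_h\le\delta\}$ small plus an $L^2$-to-$L^\infty$ bound would indeed give the estimate with uniform $C$), but then all the weight falls on your Moser step.

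That Moser step is the analytic gap. Smallness of $\sup|u|$ does \emph{not} make the linearized Monge--Amp\`ere operator a uniformly elliptic perturbation of the Laplacian near the singular set: Savin/Schauder-type smallness of $\ddb u$ (as in Lemma~\ref{lemma:savin}) is available only at scales below the harmonic radius, and $r_h\to 0$ precisely in the region where concentration must be ruled out, so there is no two-sided bound on $\omega+\ddb u$ there and no uniform ellipticity. Moreover, ``lifting $v_i$ to the smooth approximating manifolds where the equation becomes genuinely smooth'' is not a well-defined operation: the solution lives on the limit space and cannot be transplanted to the approximations while preserving the equation (the paper transfers the \emph{barrier}, not the solution). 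Note also the asymmetry: the bound $\Delta_\omega u\ge n(e^{f/n}-1)\ge -C\sup|f|$ does give a mean-value upper bound for $u$, but the lower bound would require the analogous inequality with respect to $\omega+\ddb u$, whose geometry (Sobolev constant, distance function) is exactly what is not controlled. The paper's proof handles this by a comparison argument with a barrier from \cite[Proposition 4.4]{Sz20} satisfying $\sum_i\mu_i+\mu_{\max}<-1/10$, a touching argument that avoids $\Sigma$ using a psh function equal to $-\infty$ there, and then an induction on the dimension of the Euclidean factor split off by the approximating cone, combined with rescaling at all points and scales (Lemmas~\ref{lemma:ndelta} and \ref{lemma:awaydecay}); none of this multiscale structure, which is where the uniform constant actually comes from, is present in your outline.
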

We prove this result by proving successively more general cases. We
start with the following, which follows the approach of
\cite[Proposition 4.5]{Sz20}.
\begin{lemma}\label{lemma:ndelta}
  There is a $C_1 = C_1(n, \nu)$ such that for any $\gamma\in(0,1)$
  there are $\kappa, \delta, \epsilon > 0$ depending on
  $n, \nu, \gamma$ satisfying the following. Suppose that
  $|u|, |f| < \kappa$ satisfy \eqref{eq:CMA}, and in addition
  $\ric(\omega) > -\epsilon\omega$ and
  $d_{GH}(B(p,\epsilon^{-1}), B(o, \epsilon^{-1})) < \epsilon$, where
  $o$ is the vertex of a cone that splits an isometric factor of
  $\mathbf{C}^k$ for some $k\geq 0$. Let us write
  $o \in \mathbf{C}^k\times C(Y)$.  Then
  \[ \label{eq:nc11} \sup_{B(p,1/2)} |u| \leq C_1 \left( \sup_{B(p,1)\setminus
        N_{\delta}} |u| + \sup_{B(p,1)}|f|  + \gamma \sup_{B(p,1)}
      |u|\right), \]
  where $N_{\delta}$ denotes the points $x$ at distance at most
  $\delta$ from $\mathbf{C}^k\times \{0\}$ under the
  Gromov-Hausdorff approximation. 
\end{lemma}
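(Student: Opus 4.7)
The plan is to argue by contradiction and blowup in the spirit of \cite[Proposition 4.5]{Sz20}. Fix $\gamma \in (0,1)$ and a constant $C_1$ depending only on $n, \nu$ to be chosen, and suppose the conclusion fails. Then one obtains sequences $\kappa_i, \delta_i, \epsilon_i \to 0$ and K\"ahler-Einstein limit spaces $(Z_i, p_i)$ satisfying the Ricci and Gromov-Hausdorff-closeness hypotheses, together with smooth solutions $u_i, f_i$ to \eqref{eq:CMA} on $B(p_i, 1)\setminus\Sigma_i$ obeying $|u_i|, |f_i| < \kappa_i$ and violating the estimate with constant $i$. Setting $a_i = \sup_{B(p_i,1)} |u_i|$ and $\tilde u_i = u_i/a_i$, dividing the failed estimate by $a_i$ yields $\sup_{B(p_i,1/2)}|\tilde u_i| > i \gamma$, $\sup_{B(p_i,1)\setminus N_{\delta_i}} |\tilde u_i| \to 0$, and $\sup |f_i|/a_i \to 0$, while $\sup_{B(p_i,1)}|\tilde u_i| = 1$.

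Taylor expanding the Monge-Amp\`ere equation shows that $\tilde u_i$ satisfies $\Delta_\omega \tilde u_i = f_i/a_i + O(a_i)$, with the error uniform on regions of bounded harmonic radius by standard local regularity for Monge-Amp\`ere equations with a Ricci bound. Passing to a subsequential limit using the Gromov-Hausdorff-closeness to $\mathbf{C}^k \times C(Y)$, the $\tilde u_i$ converge locally smoothly on the open set $\mathbf{C}^k \times (C(Y)\setminus\{o_Y\})$ to a bounded harmonic function $\tilde u$. Since $\sup |\tilde u_i|$ outside $N_{\delta_i}$ tends to zero and $N_{\delta_i}$ shrinks to $\mathbf{C}^k \times \{o_Y\}$, the limit $\tilde u$ vanishes identically on this open regular part.

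The main obstacle is that sequences of points $q_i \in B(p_i, 1/2)$ realising the lower bound $|\tilde u_i(q_i)| > i\gamma/2$ could accumulate on the singular set $\mathbf{C}^k \times \{o_Y\}$, so the vanishing of $\tilde u$ on the regular part does not directly yield a contradiction. I would address this by a quantitative mean-value-type inequality for the almost-harmonic $\tilde u_i$, exploiting non-collapsing, the Ricci lower bound, and the smallness of the volume of $N_{\delta_i}$ (the singular set $\mathbf{C}^k \times \{o_Y\}$ has real codimension $2(n-k)$ in the cone, the case $k=n$ being trivial). Applied at a scale comparable to the distance from $q_i$ to the singular set, this bounds $|\tilde u_i(q_i)|$ by a constant times $\sup|\tilde u_i|$ over a region disjoint from $N_{\delta'_i}$ for a suitable $\delta'_i$, plus $o(1)$ errors coming from $f_i/a_i$ and $a_i$, producing the contradiction. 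Executing this mean value step uniformly in $i$, and tracking how $\kappa, \delta, \epsilon$ must shrink with $\gamma$ to control the various error terms, is the main technical point; it parallels the cutoff and maximum principle arguments of \cite[Prop.\ 4.5]{Sz20} combined with the Cheeger-Colding structure of non-collapsed Ricci limits.
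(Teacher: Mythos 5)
There is a genuine gap, in fact two. First, the contradiction setup does not match the quantifier structure of the lemma. You fix $\gamma$ and take counterexamples ``violating the estimate with constant $i$''; but if the estimate with constant $i$ fails, then in particular $\sup_{B(p,1/2)}|u| > i\gamma \sup_{B(p,1)}|u| \ge i\gamma \sup_{B(p,1/2)}|u|$, which forces $i\gamma < 1$. So for $i \ge \gamma^{-1}$ no counterexamples exist and your sequences terminate; equivalently, your argument can only ever produce a constant of size roughly $\gamma^{-1}$, i.e.\ $C_1 = C_1(\gamma)$, which is trivially true and is not the statement. The whole content of the lemma is that $C_1$ depends only on $n,\nu$ and is \emph{uniform in} $\gamma$; this uniformity is what is used later (in Proposition~\ref{prop:decay} one takes $\gamma = r^{2+2\alpha}$ \emph{after} fixing $C$, and a constant growing like $\gamma^{-1}$ would destroy the decay). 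A correct contradiction setup would fix a candidate $C_1$ and let $\kappa_i,\delta_i,\epsilon_i \to 0$ with $\gamma$ fixed, but then your claims $\sup_{B\setminus N_{\delta_i}}|\tilde u_i|\to 0$ and $\sup|f_i|/a_i \to 0$ no longer follow (one only gets bounds by $C_1^{-1}$), and the blow-up limit need not vanish on the regular part.

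Second, and more seriously, the key analytic step is not actually carried out. The danger is precisely concentration of $u$ near $\mathbf{C}^k\times\{0\}$, where no harmonic radius bound and hence no linearization ``$\Delta_\omega \tilde u_i = f_i/a_i + O(a_i)$'' is available (Savin/Schauder estimates apply only on regions of bounded harmonic radius, and in any case give bounds proportional to $\kappa_i$, not to $a_i$, so $\tilde u_i$ need not be precompact even there). From the Monge--Amp\`ere equation one only gets the one-sided inequality $\Delta_\omega u \ge n(e^{f/n}-1) \ge -C|f|$ by the arithmetic--geometric mean inequality; the reverse inequality would require control of $\omega+\sqrt{-1}\partial\bar\partial u$, which is exactly what is unknown near the singular ray. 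So a mean-value argument can at best bound $u$ from above, not $|u|$, and the volume-fraction bookkeeping (codimension can be as low as $2$) is not the main obstruction. The paper avoids compactness altogether here: it builds an explicit Lipschitz barrier $v$ from \cite[Proposition 4.4]{Sz20} satisfying the strict eigenvalue inequality $\sum_i \mu_i + \mu_{\max} < -1/10$, is large on $\partial B(p,1-\delta)\cap N_\delta$ (of size $\delta^{-1/2}$, with $\delta \le \gamma^2$), and compares $u$ with $DEv$ by a touching-point argument, regularized across $\Sigma$ by a plurisubharmonic function with poles there; the pointwise eigenvalue computation from the Monge--Amp\`ere equation handles both the upper and lower bounds and yields $C_1 = C_1(n,\nu)$. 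Your sketch would need an ingredient of comparable strength at the singular ray, and ``a quantitative mean-value-type inequality'' as stated does not supply it.
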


In this result we do not assume, as we did in \cite{Sz20},
that $Y$ is smooth. In addition, note that on the right hand side of
\eqref{eq:nc11} the supremum of $|u|$ is taken on the set
$B(p,1)\setminus N_\delta$ which is typically larger than the
set $\{r_h > \delta\}\cap B(p,1)$ if $Y$ has singularities. 

\begin{proof}
  We claim that by \cite[Proposition 4.4]{Sz20} there exists a constant $D>0$ depending
  on $n, \nu$, and for any $\delta > 0$ there exists $C_\delta > 0$
  depending on $\delta, n, \nu$ satisfying the following. If
  $\epsilon$ is sufficiently small 
  (depending on $\delta, n, \nu$), then there exists a Lipschitz function $v$ on
  $B(p, 1-\delta)$ satisfying
  \begin{enumerate}
  \item $|\ddb v|_{\omega} < C_\delta$ on
    $B(p, 1-\delta/2) \setminus \Sigma$.
  \item $v > D^{-1}\delta^{-1/2}$ on
    $\partial B(p, 1-\delta) \cap N_\delta$.
  \item $v > D^{-1}$ on $B(p,1-\delta)$, and $v < D$ on $B(p,1/2)$.
  \item On $B(p, 1-\delta/2) \setminus\Sigma$, $v$ satisfies the differential
    inequality:
    \[
      \sum_i \mu_i + \mu_{max} < -1/10,
    \]
    where $\mu_i$ are the eigenvalues of $\ddb v$ relative to
    $\omega$, and $\mu_{max}$ is the largest eigenvalue.
  \end{enumerate}

  To see this, recall that $B(p, 1)$ is a ball in the pointed
  Gromov-Hausdorff limit of polarized K\"ahler-Einstein manifolds
  $(M_i, p_i)$. Given $\epsilon > 0$ we have
  \[d_{GH}(B(p_i,\epsilon^{-1}),B(o,\epsilon^{-1})) < \epsilon\] for
  sufficiently large $i$, and so by \cite[Proposition 4.4]{Sz20} we
  have functions $v_i$ satisfying the properties (1) -- (4) on
  $B(p_i, 1)$. While in \cite{Sz20} the property (4) is stated as
  $\sum \mu_i + \mu_{max} < 0$, from the proof the better bound
  $-1/10$ also follows (see Equation (4.3) and the inequality before
  it in \cite{Sz20}). From the construction we see that $v_i$ and
  $\nabla v_i$ are uniformly bounded on $B(p_i,1-\delta)$ and on
  compact sets away from the singular set of $B(p,1)$ (under
  Gromov-Hausdorff approximations) the functions $v_i$ have uniform
  higher derivative estimates as well, as they are constructed from
  K\"ahler potentials. We can therefore take a subsequential limit $v$
  of $v_i$ on $B(p,1-\delta)$, and conditions (1), (4) will follow
  from smooth convergence on the regular set. That the
  constants do not depend on the specific cone $C(Y)$, but only on
  $n, \nu$, can be seen using a compactness argument.

  Let us define
  \[ E = \sup_{B(p,1)\setminus N_{\delta}} |u| + \sup_{B(p,1)}|f| + \gamma
    \sup_{B(p,1)} |u| \leq 3\kappa, \]
  and set $\delta \le \gamma^2$. Define $\tilde{v} =
  DEv$. By (2), (3) above, on $\partial
  B(p,1-\delta)$ we have $\tilde{v} > u$.

  We claim that once $\kappa$ is sufficiently small, then we have
  \[ \label{eq:tv10}
    \tilde{v} \ge u \text{ on } B(p,1-\delta).
  \]
  To see this, we argue as in \cite{Sz20}, except we need to take care
  of the singular set $\Sigma$. Since $\Sigma$ is a subvariety, 
  there exists a plurisubharmonic function $h$ on $B(p,1)$ such 
  that $\Sigma = h^{-1}(-\infty)$.
  We will show \eqref{eq:tv10} by showing that we have
  $\tilde{v} > u + \epsilon' h$ on $B(p,1-\delta)$, for all $\epsilon' >
  0$, and noting that $u, \tilde{v}$ are continuous. 
  Suppose this is not the case. Write $B = B(p, 1-\delta)$ and
  for a fixed $\epsilon' > 0$ set
  \[
    t_0 = \inf\{t>0 \mid \tilde{v} + t > u + \epsilon' h \text{ on } B\}.
  \]
  If $t_0 > 0$, then the graph of $\tilde{v} + t_0$ touches the graph of
  $u+\epsilon' h$ from above at some point $q \in B$. If $q \in \Sigma$, then
  $(u+\epsilon' h)(q) = -\infty$, so we must have $q \notin \Sigma$.
  At $q$ we have
  \begin{equation}
    \label{eq:hessian}
    \ddb u(q) \le \ddb u(q) + \epsilon' \ddb h(q) \leq \ddb
    \tilde{v}(q) \le EDC_\delta \omega 
  \end{equation}
  by property (1) above and the fact that $h$ is plurisubharmonic. Let
  $\lambda_i$ be the eigenvalues of $\ddb u(q)$ relative to
  $\omega$. From \eqref{eq:hessian} we have $\lambda_i \leq C_\delta DE$.
  By \eqref{eq:CMA}, and
  using $|f|\leq E$, we have
  \begin{equation}
    \label{eq:det}
    e^{-E} \le \prod_{i=1}^n (1+\lambda_i) \le e^E.
  \end{equation}
    From \eqref{eq:det} we have
  \[\label{eq:max1}
    1+ \lambda_j \ge \frac{e^{-E}}{\prod_{i\ne j} (1+\lambda_i)}
    \ge e^{-E} (1+C_\delta E)^{-(n-1)}
    \ge 1-C_{2,\delta}E
  \]
  for some constant $C_{2,\delta} > 0$, once $E$ is sufficiently
  small. On the other hand, if $\lambda_{max} < 0$ then \eqref{eq:max1} gives
  \[\label{eq:max2}
    \lambda_{max} \ge -E.
  \]
  Finally, \eqref{eq:det} together with the bounds for $\lambda_i$
  implies that
  \[\label{eq:trace}
    1-E \le e^{-E}
    \le \prod_{i=1}^n (1+ \lambda_i)
    \le 1 + \sum_{i=1}^n \lambda_i + C_{3,\delta}E^2,
  \]
  so \eqref{eq:hessian} and \eqref{eq:trace} imply that
  \[
    -2E-C_{3,\delta}E^2 \le \sum_{i=1}^n \lambda_i + \lambda_{max}
    \le DE\left(\sum_{i=1}^n \mu_i + \mu_{max}\right)
    \le -\frac{DE}{10}.
  \]
  The first inequality above uses \eqref{eq:max2}. We can assume
  that $D > 30$. Since $E \le 3\kappa$, by letting $\kappa$ be
  sufficiently small, depending on $\delta$, we get a
  contradiction. For such $\kappa$ we have shown \eqref{eq:tv10}.

  Using \eqref{eq:tv10} and property (3) above, on $B(p,1/2)$ we have
  \[
    u \le \tilde{v} \le D^2E,
  \]
  which implies the estimate from above for $u$ required by
  \eqref{eq:nc11}. For the corresponding lower bound we can argue in a
  similar way, comparing $u$ with $-\tilde{v}$ instead, to show that
  $u > -\tilde{v} + \epsilon' h$ on $B$ for all $\epsilon' > 0$ once
  $\kappa$ is sufficiently small. 
\end{proof}

Next we have the following.

\begin{lemma}
  \label{lemma:awaydecay}
  There is a $C_2=C_2(n,\nu)$ such that for any $\gamma > 0$ there are
  $\kappa, \delta, \epsilon > 0$ depending on $n, \nu, \gamma$
  satisfying the following. Suppose $|u|, |f| < \kappa$ satisfy
  \eqref{eq:CMA}, and $d_{GH}(B(p,\epsilon^{-1}), B(o, \epsilon^{-1}))
  < \epsilon$ for the vertex $o\in C(Y)$ in a cone. Then
  \[ \label{eq:i3} \sup_{B(p,1/2)} |u| \leq C_2 \left( \sup_{\{r_h > \delta\} \cap
        B(p,1)} |u| + \sup_{B(p,1)} |f| + \gamma \sup_{B(p,1)} |u|
    \right). \]
\end{lemma}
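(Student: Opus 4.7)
The plan is to reduce Lemma \ref{lemma:awaydecay} to Lemma \ref{lemma:ndelta} by an iterative covering-and-rescaling argument based on the Cheeger-Colding-Tian stratification of the singular set of $C(Y)$. The discrepancy between the two statements is that Lemma \ref{lemma:ndelta} applied with $k=0$ only bounds $\sup_{B(p,1/2)}|u|$ by the supremum on $B(p,1)\setminus N_{\delta}$, where $N_\delta$ is a neighborhood of the cone vertex alone, whereas the goal is to replace this by the supremum on $\{r_h>\delta\}\cap B(p,1)$, which in addition avoids a neighborhood of the entire non-vertex singular locus of $C(Y)$. The task is therefore to control $|u|$ at points $x\in B(p,1)\setminus N_{\delta}$ with small harmonic radius, which are close to non-vertex singularities of $C(Y)$.

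First I would apply Lemma \ref{lemma:ndelta} with $k=0$ at scale one around $p$, using a parameter $\gamma'$ somewhat smaller than $\gamma$, to obtain
\[ \sup_{B(p,1/2)}|u|\le C_1\Bigl(\sup_{B(p,1)\setminus N_{\delta_1}}|u|+\sup|f|+\gamma'\sup_{B(p,1)}|u|\Bigr) \]
for some $\delta_1=\delta_1(n,\nu,\gamma)$. This controls the contribution of points near the vertex. To bound the first term on the right, consider a point $x\in B(p,1)\setminus N_{\delta_1}$ with $r_h(x)$ small. Since $x$ is at a definite distance from the vertex $o$, by the Cheeger-Colding-Tian theory for non-collapsed K\"ahler-Einstein limits there is a scale $s_x$ at which the rescaled ball $B(x,s_x)$ is Gromov-Hausdorff close to a cone that splits at least one isometric $\mathbf{C}$-factor, with deeper strata producing larger splittings. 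Rescaling to unit scale and applying Lemma \ref{lemma:ndelta} with the corresponding $k\ge 1$ controls $|u(x)|$ by the supremum on a strictly smaller exceptional set, together with the usual $\sup|f|$ and $\gamma''\sup|u|$ terms.

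Iterating this procedure through the (finitely many) strata of the singular set of $C(Y)$, combined with a Vitali-type covering of the bad set at each stage, produces after at most $n$ steps an estimate of the required form. The main obstacle is to maintain uniform control of the constants throughout the iteration: $C_2$ must depend only on $n,\nu$ and not on $\gamma$, and the cumulative loss in the $\gamma\sup|u|$ term must be absorbable into a single $\gamma\sup|u|$ on the right-hand side. The cleanest execution is likely to be a single proof by contradiction: assume the lemma fails for some fixed $\gamma_0>0$ along a sequence $(M_i,p_i,u_i,f_i)$ with $\kappa_i,\delta_i,\epsilon_i\to 0$, locate a point in $B(p_i,1)\setminus N_{\delta_1}$ where $|u_i|$ concentrates, pass to a tangent-cone subsequence where the splitting dimension jumps to $k\ge 1$, and apply Lemma \ref{lemma:ndelta} at the rescaled ball to contradict the assumed failure at the original scale.
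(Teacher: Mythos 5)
Your proposal follows essentially the same route as the paper: the paper proves the lemma by decreasing induction on the dimension $k$ of the Euclidean factor split off by the cone, applying Lemma~\ref{lemma:ndelta} at the top scale to handle the neighborhood of the spine $\mathbf{C}^k\times\{0\}$, and then at points outside $N_{\delta_1}$ rescaling to a uniformly bounded scale where (by Cheeger--Colding and Cheeger--Colding--Tian) an extra $\mathbf{C}$-factor splits, so the inductive hypothesis applies --- exactly your iteration through the strata, with the same constant bookkeeping (your Vitali covering and the concluding contradiction-compactness packaging are unnecessary, and note only the minor details you left implicit: the rescaled radius $r_x$ is bounded below by $r(n,\nu,\gamma)$, $\kappa$ must be shrunk by $r^2$ to preserve the smallness hypothesis after rescaling, and $r_x^{-1}$ is taken to be an integer so the rescaled ball is again a polarized K\"ahler--Einstein limit).
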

\begin{proof}
  We prove this by decreasing induction on the dimension of the Euclidean factor
  that splits off from the cone $C(Y)$, starting with $C(Y) =
  \mathbf{C}^n$. In this case, by Cheeger-Colding~\cite[Theorem
  7.3]{CC1}, we have $r_h > r_0$ on $B(p,1)$ for a fixed $r_0 >
  0$. The inequality \eqref{eq:i3} then holds if we choose $\delta <
  r_0$, and $C_2 > 1$.

  Suppose now that the result holds whenever $B(p,\epsilon^{-1})$ is
  $\epsilon$-close to a ball in a cone of the form $\mathbf{C}^j \times
  C(X)$ for $j \geq k+1$, and consider the case that
  \[ \label{eq:d1} d_{GH}(B(p,\epsilon'^{-1}), B(o, \epsilon'^{-1})) < \epsilon', \]
  where $o\in \mathbf{C}^k \times C(Y)$. By Lemma~\ref{lemma:ndelta}
  there are $C_1(n, \nu)$ and $\kappa_1, \delta_1, \epsilon_1 > 0$
  depending on $\gamma, n, \nu$, such that if $|u|, |f|
  < \kappa_1$ and $\epsilon' < \epsilon_1$, then
  \[ \label{eq:su1} \sup_{B(p,1/2)} |u| \leq C_1 \left( \sup_{B(p,1)\setminus
        N_{\delta_1}} |u| + \sup_{B(p,1)}|f|  + \gamma \sup_{B(p,1)}
      |u|\right). \]
  We will complete the proof by estimating $|u|$ outside of $N_{\delta_1}$
  using the inductive hypothesis.

  Given  the $\epsilon > 0$ from the inductive hypothesis,
  there are $r, \epsilon_2 > 0$ depending on $n, \nu, \gamma$
  with the following property. If $\epsilon' < \epsilon_2$ in \eqref{eq:d1},
  then for all $x\in B(p,1)\setminus N_{\delta_1}$ there is an $r_x > r$
  such that
  \[ d_{GH}(B(x, \epsilon^{-1} r_x), B(o', \epsilon^{-1}r_x)) < \epsilon
    r_x, \] for the origin $o' \subset \mathbf{C}^{k+1}\times
  C(Y')$ in a cone that splits off an isometric factor of
  $\mathbf{C}^{k+1}$. The reason for this is that if
  $x \in \mathbf{C}^k\times C(Y)$ does not lie in
  $\mathbf{C}^k\times \{0\}$, then the tangent cones at $x$ split an
  additional Euclidean factor by Cheeger-Colding~\cite[Theorem
  6.62]{CC0} and Cheeger-Colding-Tian~\cite[Theorem 9.1]{CCT}. 

  At such a point $x\in B(p,1)\setminus N_{\delta_1}$ 
  consider a ball $B(x, r_x)$ scaled up to unit size, which we denote by
  $B(x', 1)$. We can assume that
  $r_x^{-1}$ is an integer, so the rescaled ball is also the limit of a
  sequence of polarized K\"ahler-Einstein manifolds. On the rescaled
  ball $B(x', 1)$ we have the equation
  \[ (\omega' + \ddb u')^n = e^{f'} \omega'^n, \]
  where $\omega' = r_x^{-2}\omega$, $u' = r_x^{-2}u$ and $f' = f$. In
  particular
  \[ \sup_{B(x',1)} |u'| &\leq r_x^{-2} \sup_{B(p,1)} |u|,  \\
    \sup_{B(x',1)} |f'| &\leq \sup_{B(p,1)} |f|, \]
  and
  \[ d_{GH}(B(x', \epsilon^{-1}), B(o', \epsilon^{-1})) < \epsilon. \]
  We can now choose $\kappa, \delta, \epsilon$ small enough, depending
  on $n,\nu, \gamma$ (recall that $r_x > r$ and $r$ depends on $n,\nu,\gamma$)
  so that the inductive hypothesis applies, and therefore
  \[ \sup_{B(x', 1/2)} |u'| \leq C\left( \sup_{ \{r_h' > \delta \}\cap
        B(x', 1)}|u'|  + \sup_{B(x',1)}|f'| + \gamma \sup_{B(x', 1)}
      |u'|\right). \]
  Here we are writing $r_h'$ for the harmonic radius in the scaled up
  metric. We have $r_h' = r_x^{-1}r_h$. Scaling back down we have
  \[ |u(x)| &\leq C\left( \sup_{ \{r_h > r_x \delta\} \cap B(x, r_x)}
      |u| + \sup_{B(x, r_x)} r_x^2 |f| + \gamma \sup_{B(x,r_x)}
      |u|\right) \\
    &\leq C\left( \sup_{\{ r_h > r\delta\}\cap B(p,1)} |u| +
      \sup_{B(p,1)} |f| + \gamma \sup_{B(p,1)} |u|\right). \]
  Since $x\in B(p,1)\setminus N_{\delta_1}$ was arbitrary, this
  inequality together with \eqref{eq:su1} implies the required result. 
\end{proof}

Finally we can give the proof of Theorem~\ref{thm:nonconc}.
\begin{proof}[Proof of Theorem~\ref{thm:nonconc}]
  Given $\epsilon > 0$, by Cheeger-Colding~\cite{CC0} there exists a
  $\rho > 0$, depending on $\epsilon, n, \nu$, with the following
  property: for all $x\in B(p,1/2)$ we have some $\rho_x > \rho$ such
  that
  \[ d_{GH}(B(x, \epsilon^{-1}\rho_x), B(o, \epsilon^{-1}\rho_x)) <
    \epsilon \rho_x, \]
  for $o \in C(Y)$ in some metric cone $C(Y)$.
  We can then rescale the ball $B(x, \rho_x)$ to
  unit size, and if $\epsilon, \kappa, \delta$ is chosen sufficiently
  small, then we can apply Lemma~\ref{lemma:awaydecay} to bound $|u(x)|$
  similarly to the argument in the proof of Lemma~\ref{lemma:awaydecay}. 
\end{proof}

\section{Decay estimate}
\label{sec:decay}
The goal of
this section is to prove a convergence result,
Proposition~\ref{prop:convergence} below, which contains some common
features of Theorem~\ref{thm:main} and
Theorem~\ref{thm:main2}.
Let $(Z,p)$ be the Gromov-Hausdorff limit of a non-collapsing sequence
of polarized K\"ahler-Einstein manifolds of complex dimension $n$, and
let $C(Y)$ be the tangent cone at $p$.  We will define a family of model
metrics in a neighborhood $\mathcal{U}$ of $p$ in $Z$ parametrized by
small quadratic harmonic functions on $C(Y)$ which generate
automorphisms of $C(Y)$, and prove an abstract decay estimate,
Proposition~\ref{prop:decay} for the family. Throughout this section,
as well as later on, we will denote by $\Psi(\epsilon)$ functions
satisfying $\lim_{\epsilon\to 0}\Psi(\epsilon)=0$. 

We first recall some important properties of subquadratic harmonic
functions on $C(Y)$. The following lemma combines results going back
to Cheeger-Tian~\cite[Section 7]{CT},  Conlon-Hein~\cite[Corollary~3.6]{CH} and
Hein-Sun~\cite[Theorem~2.14]{HS} when $C(Y)$ has an isolated
singularity:

\begin{lemma}\label{lemma:HS}
  Suppose $C(Y)$ is a metric tangent cone of a non-collapsed
  Gromov-Hausdorff limit of K\"ahler-Einstein manifolds. Let $r$
  denote the radial coordinate so that $r\partial_r$ is the homothetic
  vector field. Let $J$ denote the complex structure. Suppose $u$ is a
  harmonic function on $C(Y)$. Then we have the following:
  \begin{enumerate}
  \item If $u$ is $s$-homogeneous ($\nabla_{r\partial_r} u = s u$)
    with $s< 2$, then $u$ is pluriharmonic.
  \item If $u$ is $2$-homogeneous harmonic, then $u = u_1 + u_2$,
    where $u_1$ is pluriharmonic, and $u_2$ is
    $J(r\partial_r)$-invariant.
  \item The space of real holomorphic vector fields that commute with
    $r\partial_r$ can be written as
    $\mathfrak{p}\oplus J\mathfrak{p}$, where $\mathfrak{p}$ is
    spanned by $r\partial_r$ and vector fields of the form $\nabla u$,
    where $u$ is a $J(r\partial_r)$-invariant harmonic function
    homogeneous of degree $2$. $J\mathfrak{p}$ consists of real
    holomorphic Killing vector fields.
  \end{enumerate}
\end{lemma}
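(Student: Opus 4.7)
My plan would be to revisit the arguments from \cite{CT}, \cite{CH}, \cite{HS}---which establish the three statements in the case of smooth $Y$---and verify that they extend to possibly singular $Y$. The structural input I would use is that $L := r\partial_r$ and $\xi := J(r\partial_r)$ are commuting real holomorphic and Killing vector fields on the regular part of $C(Y)$, whose complexification $L - i\xi$ generates the holomorphic $\mathbf{C}^*$-action on the affine variety $C(Y)$.

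For (1), I would decompose $u$ into Fourier modes of $\xi$, writing $u = \sum_k u_k$ with $\xi u_k = k u_k$; each $u_k$ is still harmonic and $s$-homogeneous. For $k \neq 0$, bihomogeneity under the commuting fields $L$ and $\xi$ gives $u_k$ pure weight under the complexified Euler field $L - i\xi$, and a direct computation then shows that $u_k$ is the real part of a holomorphic function on $C(Y)$, hence pluriharmonic. For the $\xi$-invariant part $u_0 = r^s f$, the equation $\Delta u_0 = 0$ reduces to the eigenvalue equation $\Delta_Y f = s(s + 2n - 2) f$ for a basic $f$ on the link; a Lichnerowicz-type gap for the basic Laplacian on the Sasaki-Einstein link would force any eigenvalue below the $s = 2$ threshold to correspond only to pluriharmonic $f$, yielding $\ddbar u_0 = 0$. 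Part (2) then follows from the same decomposition by setting $u_1 := \sum_{k \neq 0} u_k$ (pluriharmonic) and $u_2 := u_0$ ($\xi$-invariant).

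For (3), I would use that every real holomorphic vector field $V$ on $C(Y)$ commuting with $L$ has a $2$-homogeneous Hamiltonian potential $u$, and apply (2) to $u$. The pluriharmonic part $u_1$ corresponds to a real holomorphic Killing field populating $J\mathfrak{p}$, while the $\xi$-invariant part $u_2$ yields the gradient field $\nabla u_2 \in \mathfrak{p}$; in particular the radial field $L$ itself arises from $u_2 = \tfrac{1}{2} r^2$. The claimed decomposition $\mathfrak{p} \oplus J\mathfrak{p}$ then follows by splitting any such $V$ into its Killing and gradient parts.

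The hard part will be controlling the singular set $\Sigma \subset C(Y)$, which is absent in the smooth-link cases of \cite{CT}, \cite{CH}, \cite{HS}. The smooth-link proofs rely on pointwise identities and integration by parts on slices of the regular part of $C(Y)$, and I would need to verify that $\Sigma$ does not produce uncontrolled boundary terms. Since by Donaldson-Sun $\Sigma$ has real codimension at least $4$ in $C(Y)$, it is removable for $W^{1,2}$-weak solutions of the Laplace equation, and combined with a capacity cut-off on $\Sigma$---together with the plurisubharmonic defining function for $\Sigma$ used in the proof of Lemma~\ref{lemma:ndelta}---the needed integrations by parts should go through as in the smooth case.
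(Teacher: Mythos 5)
Your plan to handle the singular set by codimension-$4$ capacity cut-offs is in the right spirit (this is exactly how the paper deals with it: the proof given there is essentially a reduction to \cite[Corollary~2.18]{Chiu} and \cite[Proposition~3.19]{ChiuThesis}, using cut-off functions as in \cite[Lemma~2.3]{Chiu}). But the analytic core of your argument for (1) has a genuine gap: the claim that every harmonic Fourier mode $u_k$ with $k\neq 0$ is automatically pluriharmonic, using only bihomogeneity under $r\partial_r$ and $\xi=J(r\partial_r)$, is false. Having pure weight under $L$ and under $\xi$ (hence under $L-i\xi$) does not force a function to be the real part of a holomorphic function: on the flat cone $\mathbf{C}^2$ the function $\mathrm{Re}(z_1^2\bar z_2)$ is harmonic, homogeneous of degree $3$ with Reeb weight $\pm 1$, and is not pluriharmonic (its modes have bidegree $(2,1)$). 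So the hypothesis $s<2$ must enter for the non-basic modes as well, not only for the $\xi$-invariant part; the relation for a bidegree-$(p,q)$ mode is $s=p+q$, $k=p-q$, and pluriharmonicity is equivalent to $pq=0$, i.e.\ $|k|=s$, which is an extra spectral statement you have not proved. The known proofs obtain this via a Bochner-type analysis of $\sqrt{-1}\partial\bar\partial u$ as a harmonic $(1,1)$-form on the Ricci-flat cone with homogeneity $s-2$ (Cheeger--Tian, Conlon--Hein, Hein--Sun), extended to singular links by the cut-off argument; your "direct computation" does not replace this, and the same missing input undermines the claim in (2) that the non-basic part of a $2$-homogeneous harmonic function is pluriharmonic.

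Two smaller points. First, the Lichnerowicz-type gap for the basic Laplacian on a possibly singular Sasaki--Einstein link, which you invoke for the $k=0$ modes, is itself essentially equivalent to the statement being proved in that range and needs the same cut-off-justified integration by parts; also its conclusion should be that there are \emph{no} basic eigenfunctions with $0<s<2$ (a basic pluriharmonic function homogeneous of positive degree vanishes), rather than that such eigenfunctions are pluriharmonic. Second, in (3) the assertion that every real holomorphic vector field commuting with $r\partial_r$ admits a $2$-homogeneous Hamiltonian potential is not immediate; the standard route produces a holomorphy potential such as $\tfrac12 V(r^2)$ and then applies (2) together with the Killing/gradient splitting, so this part is salvageable but only once (2) is actually established.
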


\begin{proof}
  In our setting the singular set has Hausdorff codimension at least
  $4$~\cite{CCT}. To deal with the singular set we can use the cut-off functions
  for example in \cite[Lemma~2.3]{Chiu}. (1) is proved in
  \cite[Corollary~2.18]{Chiu}. For (2) and (3), see
  \cite[Proposition~3.19]{ChiuThesis} for more details.
\end{proof}

On $\mathcal{U}$, we consider a family of Calabi-Yau metrics on the
regular set of $\mathcal{U}$ with tangent cone $C(Y)$ at $p$,
satisfying properties that enable a decay estimate. To proceed, let
$H$ denote the space of quadratic harmonic functions $h$ such that
$\nabla h$ generates a biholomorphism which commutes with scaling. $H$
as a vector space is equipped with the $L^\infty$ norm on
$B(0,1) \subset C(Y)$. For $h \in H$ let us denote this norm simply by
$\|h\|$.

\begin{definition}
  \label{defn:model}
  Let $U \subset H$ be an open neighborhood of $0 \in H$. A family
  $\mathcal{F}$ of model Calabi-Yau metrics consists of a set of
  Calabi-Yau metrics $\omega_h$ on the regular set of $\mathcal{U}$,
  whose metric completion is homeomorphic to $\mathcal{U}$,
  parametrized by $h \in U$, with the following properties:
  \begin{enumerate}
  \item For sequences $h_i \in U$ and $r_i \to 0$, set
    $B_i = B_{r_i^{-2}\omega_{h_i}}(p,1)$. Then there is a sequence of
    holomorphic maps $F_i: B_i \to \mathbf{C}^N$, and
    $\Psi(i^{-1})$-Gromov-Hausdorff approximations $f_i : B_i \to
    B(0,1)$ such that  $|F_i-F_\infty \circ f_i| < \Psi(i^{-1})$.
  \item The volume form $\omega_h^n$ is independent of $h \in U$.
  \item For $h,k \in U$ and $r>0$, we have
    $|d_{\omega_h}-d_{\omega_k}| \le C(\|k\|+\|h\|)r$ on
    $B_{\omega_h}(p,r)$.
  \item For $h,k \in U$, on $B_{\omega_h}(p,2)$ we have
    $\omega_k = \omega_h + \ddb u$, and for every $r>0$, we have
    $|u| \le C\|h-k\|r^2$ on $B_{\omega_h}(p,r)$.
  \item Suppose that there are $r_i \to 0$ and sequences
    $h_i,k_i \in U$ such that $\|h_i\|, \|k_i\| \to 0$. Write
    $\omega_{k_i} = \omega_{h_i} + \ddb u_i$ as in (4). For any
    $\epsilon > 0$ and $K$ a compact set in the regular set of
    $B(0,1) \subset C(Y)$, there exist compact sets
    $K_i \subset B_{r_i^{-2}\omega_{h_i}}(p,1)$ such that $K_i \to K$
    in the Gromov-Hausdorff sense, and
    \[
      |r_i^{-2}u_i - f_i^*(k_i - h_i)| \le \epsilon\|k_i-h_i\|
    \]
    on $K$ for all sufficiently large $i$, where $f_i$ is the
    Gromov-Hausdorff approximation in (1).
  \end{enumerate}
\end{definition}

The following lemma shows that we have higher regularity of the
solutions to the complex Monge-Amp\`ere equation if the $L^\infty$
norm is sufficiently small.

\begin{lemma}
  \label{lemma:savin}
  Suppose that $B(p,2)$ is a ball in a K\"ahler-Einstein manifold of
  complex dimension $n$, with metric $\omega$ satisfying
  $\operatorname{Ric}(\omega)=c'\omega$, such that in suitable
  coordinates $z^i$ the components $\omega_{i\bar j}$ satisfy
  $|\partial^3(\delta_{i\bar j} - \omega_{i\bar j})| < \frac{1}{100}$
  in terms of the Euclidean metric $\delta_{i\bar j}$.  If
  $\epsilon > 0$ is sufficiently small, then we have the following.

  Suppose that $\eta = \omega + \ddb u$ is another K\"ahler-Einstein
  metric on $B(p,2)$ with $\operatorname{Ric}(\eta)=c\eta$ and
  $\eta^n = e^f \omega^n$, so that
  \[
    |u|,|f|, |c|,|c'| < \epsilon.
  \]
  There exist $C_k > 0$ depending on the dimension $n$ and on $k$, such that
  \[
    \|u\|_{C^{k,\alpha}(B(p,1))} < C_k\epsilon. 
  \]
\end{lemma}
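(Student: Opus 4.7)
My plan is to combine a small-perturbation regularity theorem for fully nonlinear elliptic equations with a bootstrap that uses the K\"ahler-Einstein structure to trade regularity between $u$ and $f$.

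First, I use the two Einstein conditions to extract regularity of $f$ from that of $u$. Taking $\ddb\log$ of the volume form identity $\eta^n = e^f\omega^n$ yields
\[
\ddb f = \ric(\omega) - \ric(\eta) = c'\omega - c\eta = (c'-c)\omega - c\ddb u.
\]
Setting $v := f + cu$, this becomes $\ddb v = (c'-c)\omega$, and tracing with respect to $\omega$ gives the linear elliptic equation
\[
\Delta_\omega v = n(c'-c).
\]
The coefficients here are smooth: the assumed $C^3$-closeness of $\omega$ to the Euclidean metric, combined with a standard Schauder bootstrap of the Einstein equation $\ric(\omega) = c'\omega$ for $\omega$ itself, yields uniform $C^{k,\alpha}$ bounds on $g_{i\bar j}$ depending only on $n$ and $k$. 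Since $|v|\le 2\epsilon$, interior Schauder estimates give $\|v\|_{C^{k,\alpha}(B(p,3/2))} \le C_k\epsilon$ for every $k$.

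Next I substitute $f = v - cu$ into the Monge-Amp\`ere equation to obtain a fully nonlinear scalar equation for $u$ alone,
\[
\log\det(g_{i\bar j} + u_{i\bar j}) - \log\det g_{i\bar j} + c u = v(x),
\]
whose operator is concave and uniformly elliptic in a neighborhood of $D^2 u = 0$ (by the $C^3$-closeness of $g$ to $\delta$), with smooth $x$-dependence and a smooth right-hand side. The zero solution $u\equiv 0$ corresponds to $v\equiv 0$, $c=0$. Savin's small perturbation theorem, in the version allowing smooth $x$-dependence and a lower-order term, then asserts that once $\epsilon$ is sufficiently small
\[
\|u\|_{C^{2,\alpha}(B(p,1))} \le C\bigl(\|u\|_{L^\infty} + \|v\|_{C^\alpha}\bigr) \le C\epsilon.
\]
With $u\in C^{2,\alpha}$ of norm $O(\epsilon)$, Step 1 gives $\|f\|_{C^{2,\alpha}} \le C\epsilon$. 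Differentiating the Monge-Amp\`ere equation once turns it into a linear elliptic equation for each first derivative $\partial u$ with $C^{0,\alpha}$ coefficients (the entries of $\eta^{-1}$) and $C^{1,\alpha}$ right-hand side (involving $\partial f$); Schauder yields $u\in C^{3,\alpha}$, and the induction continues, using at each stage that $f=v-cu$ inherits the current regularity of $u$ from Step 1.

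The main obstacle is the application of Savin's theorem in Step 2: one needs a version allowing both $x$-dependence and a lower-order term in $u$, which is a routine extension of the original argument available in the PDE literature. A minor subtlety is establishing the uniform $C^{k,\alpha}$ bounds on $\omega$ itself from the assumed $C^3$ bound, but this is a standard Schauder bootstrap of the Einstein equation.
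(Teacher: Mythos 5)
Your overall strategy is the same as the paper's: the reduction $v=f+cu$ with $\ddb v=(c'-c)\omega$ and Schauder bounds $\|v\|_{C^{k,\alpha}}\le C_k\epsilon$, then a Savin-type small perturbation argument to get a small $C^{2,\alpha}$ bound on $u$, then a linear Schauder/bootstrap to reach all $C^{k,\alpha}$ norms with estimates linear in $\epsilon$. Steps 1 and 3 match the paper essentially verbatim (the paper bootstraps by expanding $(\omega+\ddb u)^n-\omega^n$ into a uniformly elliptic linear equation $Pu=e^f-1$ rather than differentiating, but these are equivalent routine arguments).

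The one place where your write-up falls short of a proof is the application of Savin's theorem. You apply it to $F(D^2u,u,x)=\log\det(g_{i\bar j}+u_{i\bar j})-\log\det g_{i\bar j}+cu-v(x)$ and quote an estimate $\|u\|_{C^{2,\alpha}}\le C(\|u\|_{L^\infty}+\|v\|_{C^\alpha})$. But for this operator $u\equiv 0$ is \emph{not} a solution (since $F(0,0,x)=-v(x)$), whereas Savin's theorem in \cite{Sav} is a statement about solutions $L^\infty$-close to an exact solution (it does already allow $x$-dependence and lower-order terms, so those are not the issue); the inhomogeneous, quantitative version you invoke is not what is stated there, and "routine extension available in the literature" is exactly the step that needs an argument or a precise citation. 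The paper supplies it: one first solves the Dirichlet problem $(\omega+\ddb u_0)^n=e^{v-cu_0}\omega^n$ with $u_0=0$ on the boundary of a slightly smaller ball, via the implicit function theorem (the linearization $\Delta+c$ is invertible for $|c|$ small), obtaining $u_0$ with $\|u_0\|_{C^{2,\alpha}}$ as small as desired; then $h=u-u_0$ solves an equation for which $h\equiv 0$ is an exact solution and Savin applies directly, giving $|u|_{C^2}$ small, after which the linear bootstrap (using $\|f\|_{C^\alpha}=\|v-cu\|_{C^\alpha}\le C\epsilon$) yields the bounds $\|u\|_{C^{k,\alpha}}\le C_k\epsilon$. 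So your argument is correct in outline, but to make Step 2 rigorous you should either include this auxiliary construction (or an equivalent reduction of the inhomogeneity) or cite a precise inhomogeneous small-perturbation statement; note also that Savin's conclusion is smallness of $\|u\|_{C^{2,\alpha}}$, not an estimate linear in $\|u\|_{L^\infty}$, and the linearity in $\epsilon$ should be recovered, as in the paper, from the subsequent linear elliptic estimates.
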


\begin{proof}
  All the operators and norms below are taken with respect to
  $\omega$, and the constants $C_k$ may change from line to line.
  Note first that from elliptic regularity for the equation
  $\mathrm{Ric}(\omega)=c'\omega$, we obtain higher order estimates
  $|\partial^k \omega_{i\bar j}| < C_k$ for the components of
  $\omega$.  From the equation $\eta^n = e^f\omega^n$ and the
  K\"ahler-Einstein condition for $\omega$ and $\eta$, we have
  $c\eta = -\ddb f + c'\omega$, so the function $v = cu + f$ satisfies
  $\ddb v = (c'-c)\omega$. It follows that $\Delta v = (c'-c)n$. Using
  the Schauder estimates we then have $\|v\|_{C^k} < C_k \epsilon$ on
  the ball where $\{|z| < 1.9\}$.

  We now rewrite the equation in a form so that Savin's small
  perturbation result~\cite{Sav} can be applied. 
  Consider the equation
  \[
    (\omega+ \ddb u_0)^n = e^{v-cu_0}\omega^n
  \]
  for $u_0$, with $u_0=0$ on the boundary of the ball $\{|z| < 1.9\}$ in our
  coordinates. Define
  \[
    F : C^{2,\alpha}_0 \times C^{2,\alpha} \times \mathbf{R} &\to
    C^{0,\alpha} \\
    (u_0,v,c) &\mapsto \log \det\left(\frac{(\omega+\ddb
        u_0)^n}{\omega^n}\right) - v + cu_0,
  \]
  where $C^{2,\alpha}_0, C^{2,\alpha}$ denote functions on the ball
  $\{|z| < 1.9\}$, with zero boundary values in the first case. Note that
  $F(0,0,0)=0$, and the linearization at $(0,0,0)$ in the
  $u_0$ direction is $\Delta+c$. As long as $c$ is sufficiently small,
  this operator is invertible. By the implicit
  function theorem, for sufficiently small $v \in C^{2,\alpha}$ and
  $c \in \mathbf{R}$ we can find $u_0$ that satisfies the equation, with
  $\|u_0\|_{C^{2,\alpha}} < \delta$, where $\delta > 0$ can be made
  as small as we like by choosing $\epsilon$  small. 

  To write our equation in a different form, let $h = u - u_0$. Then $h$
  satisfies
  \[
    (\omega + \ddb u_0 + \ddb h)^n = e^{-ch}e^{v-cu_0}\omega^n.
  \]
  Thanks to the bounds for $v$ and $u_0$, the above equation is
  uniformly elliptic, and $h=0$ is a solution of it.
  By Savin's theorem~\cite{Sav}, for any given
  $\delta > 0$ we have $\|h\|_{C^{2,\alpha}(B(p,1))} < \delta$ once $h$
  is sufficiently small in $L^\infty$.  It follows that if $\epsilon$ is
  chosen sufficiently small, then $h$ and $u_0$, and therefore also $u$
  will satisfy $|u|_{C^2} < \delta$ on the ball $\{|z| < 1.8\}$. 

  Let us now write the equation $(\omega + \ddb u)^n = e^f \omega^n$ for
  $u$ as
  \[\label{eq:eqnexpand}
    \left(n\omega^{n-1} + {n\choose 2}\omega^{n-2}\wedge(\ddb u) + \dots + (\ddb u)^{n-1}\right)
    \wedge
    \ddb u = (e^f-1)\omega^n. 
  \]
  If $\delta$ is sufficiently small, then this can be written as a
  uniformly elliptic linear equation
  \[
    P u = e^f-1,
  \]
  where the coefficients of $P$ (which depend on $u$) are bounded in
  $C^k$. Note that if $|f| < \epsilon$ for small $\epsilon$,  then $|e^f
  - 1| < 2\epsilon$.  We can now use standard $L^p$ and Schauder
  estimates, as well as bootstrapping using the estimates that we
  already have for $cu+f$, to obtain $|u|_{C^k} < C_k \epsilon$ on the
  smaller ball $\{ |z| < 1.7\}$. 
\end{proof}

We will need the following result, which allows us to estimate the
difference between the distance functions of a model metric and a
Gromov-Hausdorff limit. This will be used in the proof of
Proposition~\ref{prop:convergence} below, to ensure that along the
iteration procedure the distance functions of the two metrics that we are comparing
remain close to each other at smaller and smaller scales. 

\begin{lemma}
  \label{lemma:ghestimate}
  Let $\lambda > 0$. Then for all sufficiently small $\epsilon > 0$
  and $r > 0$, the following holds. Let
  $\omega = \omega_h \in \mathcal{F}$ be a model metric with
  $\|h\| \le \epsilon$. Now, suppose $\eta$ is another
  K\"ahler-Einstein metric on the regular set of $B_\omega(p,2r)$
  obtained as the non-collapsed Gromov-Hausdorff limit of polarized
  K\"ahler-Einstein manifolds, with the following properties:
  \begin{itemize}
  \item $\operatorname{Ric}(\eta) = c\eta$ with $|c| < r^{-2}\epsilon$;
  \item $\eta^n = e^f \omega^n$ with $|f| < \epsilon$;
  \item $\omega = \eta + \ddb u$ with $|u| < r^2\epsilon$;
  \item $|d_\omega-d_\eta| < r/100$.
  \end{itemize}

  Then we have $|d_\omega-d_\eta| < \lambda r$ on $B_\omega(p,r)$.
\end{lemma}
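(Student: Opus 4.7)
The plan is to argue by contradiction and compactness. Suppose the conclusion fails; then there exist $\lambda > 0$ and sequences $\epsilon_i, r_i \to 0$, model metrics $\omega_i = \omega_{h_i}$ with $\|h_i\| \le \epsilon_i$, and singular K\"ahler-Einstein metrics $\eta_i$ satisfying the four listed hypotheses with parameters $(\epsilon_i, r_i)$, for which $|d_{\omega_i}(p,q_i) - d_{\eta_i}(p,q_i)| \ge \lambda r_i$ at some $q_i \in B_{\omega_i}(p, r_i)$. Set $\tilde\omega_i = r_i^{-2}\omega_i$, $\tilde\eta_i = r_i^{-2}\eta_i$, $\tilde u_i = r_i^{-2}u_i$. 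The rescaled data satisfy $\tilde\omega_i = \tilde\eta_i + \ddb \tilde u_i$ with $|\tilde u_i|, |f_i|, r_i^2|c_i| < \epsilon_i$, $|d_{\tilde\omega_i} - d_{\tilde\eta_i}| < 1/100$ on $B_{\tilde\omega_i}(p,2)$, and $|d_{\tilde\omega_i}(p,q_i) - d_{\tilde\eta_i}(p,q_i)| \ge \lambda$.

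Next I would apply Definition~\ref{defn:model}(1) with the sequences $h_i \in U$ and $r_i \to 0$ to identify the pointed Gromov-Hausdorff limit of $(B_{\tilde\omega_i}(p,1), p, \tilde\omega_i)$ with the unit ball at the vertex of the tangent cone $C(Y)$, via the holomorphic embeddings $F_i \to F_\infty$. Because $|d_{\tilde\omega_i} - d_{\tilde\eta_i}| < 1/100$, after passing to a subsequence $(B_{\tilde\eta_i}(p,1), p, \tilde\eta_i)$ also converges in pointed Gromov-Hausdorff sense to a limit $(B_\infty, p_\infty, \eta_\infty)$ that is $1/100$-close to $B(0,1) \subset C(Y)$. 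Since each $\tilde\eta_i$ is itself a non-collapsed Gromov-Hausdorff limit of polarized K\"ahler-Einstein manifolds with rescaled Ricci constant $r_i^2 c_i \to 0$, a diagonal extraction together with the Donaldson-Sun theorem shows that $B_\infty$ is a normal complex variety carrying a singular K\"ahler-Einstein metric $\eta_\infty$ whose regular set supports smooth convergence $\tilde\eta_i \to \eta_\infty$.

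On the regular set of $C(Y)$, Cheeger-Colding $\epsilon$-regularity gives smooth convergence $\tilde\omega_i \to \omega_{C(Y)}$; working in the resulting harmonic charts, Lemma~\ref{lemma:savin} applied to the pair $(\tilde\omega_i, \tilde\eta_i)$ upgrades the $L^\infty$ bound $|\tilde u_i| < \epsilon_i$ to $C^k$ bounds on $\tilde u_i$, so $\tilde\eta_i \to \omega_{C(Y)}$ smoothly on the regular set as well, forcing $\eta_\infty = \omega_{C(Y)}$. Since the singular set of $C(Y)$ has Hausdorff codimension at least four, the distance function on $B_\infty$ is determined by infima of lengths of curves through the regular set, so the smooth convergences above imply $|d_{\tilde\omega_i} - d_{\tilde\eta_i}| \to 0$ uniformly on $B_{\tilde\omega_i}(p,1)$ under the Gromov-Hausdorff identifications. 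Sending $q_i$ to a subsequential limit $q_\infty \in B_\infty$ then contradicts the lower bound $|d_{\tilde\omega_i}(p,q_i) - d_{\tilde\eta_i}(p,q_i)| \ge \lambda$.

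The step I expect to be the main obstacle is establishing smooth convergence of $\tilde\eta_i$ to $\omega_{C(Y)}$ on the regular set. Definition~\ref{defn:model} only provides direct geometric control of the model side $\tilde\omega_i$, so producing matching smooth convergence for $\tilde\eta_i$ hinges on combining Lemma~\ref{lemma:savin} (to bootstrap from the small $L^\infty$ bound on the potential to Schauder bounds) with the hypothesis that $\eta_i$ is itself a polarized K\"ahler-Einstein limit, so that Donaldson-Sun applies and identifies the limit $\eta_\infty$ compatibly with the tangent cone. Once this identification is in hand, the codimension-four result for the singular set reduces the comparison of distance functions to a length computation on the regular set, which is controlled by the smooth convergence.
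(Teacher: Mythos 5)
Your overall strategy---contradiction, rescaling, using Lemma~\ref{lemma:savin} to upgrade the $L^\infty$ bound on the potential to $C^k$ closeness on the regular part, and then reducing the comparison of distance functions to the regular set---is the same as the paper's, but the two steps you flag are exactly where genuine gaps remain. First, you obtain smooth convergence $\tilde\omega_i \to \omega_{C(Y)}$ on the regular set by applying Cheeger--Colding $\epsilon$-regularity to the model metrics. This is not available: Definition~\ref{defn:model} does not make the model metrics $\omega_{h_i}$ Ricci limit spaces or complete manifolds with Ricci bounds---only $\eta$ is hypothesized to be a non-collapsed limit of polarized K\"ahler--Einstein manifolds---so neither Anderson/Cheeger--Colding regularity nor Colding--Naber applies to $\tilde\omega_i$ directly. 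The paper runs this step in the opposite order: harmonic-radius bounds are taken on the $\tilde\eta_i$ side (where Donaldson--Sun theory applies), and Lemma~\ref{lemma:savin} then transfers bounded geometry and $C^1$-closeness to $f_i^*\tilde\omega_i$.

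Second, and more seriously, the assertion that codimension four of the singular set together with smooth convergence on regular sets forces $|d_{\tilde\omega_i}-d_{\tilde\eta_i}|\to 0$ uniformly is precisely the content of the lemma and is not justified as stated. Two ingredients are missing. (i) You need a well-defined comparison map between the two limit spaces before you can ``send $q_i$ to a limit'' and pass both distances to the limit simultaneously; a priori the Gromov--Hausdorff limit $B_\infty$ of the $\tilde\eta_i$-balls and $B(0,1)\subset C(Y)$ are different spaces, and declaring $\eta_\infty=\omega_{C(Y)}$ is close to assuming the conclusion. The paper constructs this map as a limit of the inclusions $A_i\subset B_i$, and the equicontinuity needed to extract the limit comes from the holomorphic embeddings $F_i$ in property~(1) of Definition~\ref{defn:model} (gradient estimate for bounded holomorphic maps, plus a uniform injectivity claim for the $F_i$); nothing in your outline plays this role. (ii) Codimension four alone does not let you compute distances through the regular set; what is needed is that minimal geodesics between regular points avoid the singular set (Colding--Naber \cite{CN}), and this must be invoked in both limit spaces---in the limit of the $\tilde\eta_i$-balls for one inequality and in $B(0,2)\subset C(Y)$ (a tangent cone of a Ricci limit space) for the other---precisely because such structure is unavailable for $\tilde\omega_i$ itself, so almost-minimizing $\tilde\omega_i$-curves must be produced by transferring limit geodesics back via the approximations. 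With these two ingredients supplied your outline becomes essentially the paper's proof; without them the key uniform distance comparison is asserted rather than proved.
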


\begin{proof}
  We argue by contradiction, supposing that we have $\epsilon_i, r_i \to 0$
  and corresponding $\eta_i$, $f_i$ and $u_i$ such that
  the result fails. Let us rescale the metrics by setting
  $\tilde\omega_i = r_i^{-2}\omega, \tilde\eta_i =
  r_i^{-2}\eta_i$. Set $A_i = B_{\tilde\eta_i}(0,1)$ and
  $B_i = B_{\tilde\omega_i}(0,2)$. By the assumption on
  $|d_{\omega_i}-d_{\eta_i}|$ we have the inclusions
  $f_i: A_i \subset B_i$. To get a contradiction, we will show that
  $f_i$ is a $\Psi(i^{-1})$-Gromov-Hausdorff approximation for
  sufficiently large $i$. Let us define $g_i = F_i \circ f_i$, where
  $F_i$ are the maps in property~(1) of
  Definition~\ref{defn:model}. Then $g_i: A_i \to \mathbf{C}^N$ are
  holomorphic maps. By property~(1) of Definition~\ref{defn:model}, we
  have $|g_i| \le C$ for some constant $C>0$ once $i$ is sufficiently
  large. Then by the gradient estimate for holomorphic maps, we have
  $|\nabla g_i|_{\tilde\eta_i} \le C$ for a uniform constant
  $C>0$. This implies that $g_i$ are equicontinuous.

  We claim that for
  all $\epsilon > 0$, there exists $\delta > 0$ such that if
  $x,y \in B_i$ and $|F_i(x)-F_i(y)| < \delta$, then
  $d_{\tilde\omega_i}(x,y) < \epsilon$. If this is not the case, then
  there exist $x_i,y_i \in B_i$ with $|F_i(x_i)-F_i(y_i)| \to 0$ but
  $d_{\tilde\omega_i}(x_i,y_i) \ge \epsilon$. By passing to a
  subsequence, we may assume that $x_i \to x$ and $y_i \to y$ for
  $x,y \in B(0,2)$ under the Gromov-Hausdorff convergence
  $B_i \to B(0,2)$. The maps $F_i$ converge in the Gromov-Hausdorff
  sense to the standard embedding of $B(0,2) \subset \mathbf{C}^N$. It
  follows that $F(x) = F(y)$ but $d_{C(Y)}(x,y) \ge \epsilon$,
  contradicting the fact that $F$ is an embedding. This proves the
  claim.

  It follows from the claim that the maps $f_i = F_i^{-1}\circ g_i$
  form an equicontinuous family of maps from $A_i$ to $B_i$. Thus
  there exists a subsequence of $f_i$ converging to a map
  $f_\infty: A \to B$ under the Gromov-Hausdorff
  convergence $A_i \to A$ and $B_i \to B$. Let us denote the singular
  K\"ahler-Einstein metrics on $A$ and $B$ by $\omega_A$
  and $\omega_B$, respectively. The proof can be concluded once we
  show that $f_\infty$ is an isometry onto its image. Since $A$ is the
  metric completion of its regular set $\mathcal{R}_A$, it is enough to
  show that for $x,y \in \mathcal{R}_A$,
  $d(x,y) = d(f_\infty(x), f_\infty(y))$. Note that by property (1) in
  Definition~\ref{defn:model} we have $B = B(0,2) \subset
  C(Y)$. 

  Let $\gamma$ be a minimal geodesic connecting $x,y$. By
  Colding-Naber~\cite{CN}, $\gamma$ lies entirely in
  $\mathcal{R}_A$. Let $V$ be an open set containing $\gamma$ such that
  the compact closure of $V$ is contained in $\mathcal{R}_A$, and let
  $V_i \subset A_i$ be the corresponding open sets converging to $V$
  under the Gromov-Hausdorff convergence. On $V_i$ we have uniform
  bounds of the geometry of $\tilde\eta_i$, so by
  Lemma~\ref{lemma:savin}, we have bounds
  $|\nabla^j(\tilde\eta_i-f_i^*\tilde\omega_i)| < C\epsilon_i$ on
  $V_i$ for $j = 0,1$. Letting $i \to \infty$, it follows that
  $f_\infty: V \to V'$ is an isomorphism onto its image, and
  $f_\infty^*\omega_B = \omega_A$. So we have
  $d_A(x,y) = \operatorname{length}_{\omega_A}(\gamma) =
  \operatorname{length}_{\omega_B}(f_\infty\circ \gamma) \ge
  d_B(f_\infty(x),f_\infty(y))$. To prove the opposite inequality, let
  us now suppose that $\tilde\gamma$ is a minimal geodesic connecting
  $f_\infty(x)$ and $f_\infty(y)$. Since $B = B(0,2)$ by property~(1)
  Definition~\ref{defn:model}, by Colding-Naber~\cite{CN}
  $\tilde\gamma$ is contained in an open set $W$ with compact closure
  in the regular set of $B$. Let $W_i$ be open sets in $B_i$
  corresponding to $W$ under the Gromov-Hausdorff convergence
  $B_i \to B$, and let $\gamma_i \subset W_i$ be curves converging to
  $\tilde\gamma$, with endpoints $x_i \to x, y_i \to y$. Over $W_i$ we
  have smooth convergence of the metrics $\tilde\eta_i \to \omega_A$
  and $\tilde\omega_i \to \omega_B$ in the Gromov-Hausdorff sense. So we
  have
  $d_B(f_\infty(x), f_\infty(y)) =
  \operatorname{length}_{\omega_B}(\tilde\gamma) = \lim_{i \to \infty}
  \operatorname{length}_{\tilde\omega_i}(\gamma_i) = \lim_{i \to
    \infty}\operatorname{length}_{\tilde\eta_i}(\gamma_i) \ge \lim_{i
    \to \infty} d_{\tilde\eta_i}(x_i,y_i) = d_A(x,y)$. We have shown
  that $f_\infty$ is an isometry onto its image, so it follows that $f_i$
  is a $\Psi(i^{-1})$-Gromov-Hausdorff approximation.
\end{proof}

The main result in this section is the following abstract decay
estimate.

\begin{prop}
  \label{prop:decay}
  There exist constants $C,\alpha, \lambda > 0$ (depending on the cone
  $C(Y)$) such that for $\epsilon, r > 0$ sufficiently small, we have
  the following. Fix a model metric $\omega_h$ with
  $\|h\| \le \epsilon$. Let $\eta$ be another metric on
  $B_{\omega_h}(p, 2r)$ obtained as the non-collapsed Gromov-Hausdorff
  limit of a sequence of polarized K\"ahler-Einstein
  manifolds. Suppose that $\eta = \omega_h + \ddb u$ on
  $B_{\omega_h}(p,2r)$ satisfies $\eta^n = e^f\omega_h^n$, and for
  some $\kappa < \epsilon$ we have
  $\operatorname{Ric}(\eta) = c\eta$ for $|c| \le r^{-2}\kappa$, and
  \[\label{eq:uest}
    |d_\eta-d_{\omega_h}| &< \frac{r}{100}, \\
    |u| &< r^2\kappa, \\
    |\nabla f|_\eta &< r^{-1}\kappa\epsilon, \\
    f(p) &= 0.
  \]
  Then we can find another model metric $\omega_k$ and a smooth
  function $u'$ on $B_{\omega_h}(p,r)$ satisfying
  \begin{enumerate}
  \item $\omega_h + \ddb u = \omega_k + \ddb u'$,
  \item $\|k-h\| \le C\kappa$.
  \item $\sup_{B_{\omega_k}(p,4\lambda r)} |u'| \le \lambda^{2+\alpha}r^2\kappa$.
  \end{enumerate}
\end{prop}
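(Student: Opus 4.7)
The plan is a blow-up/contradiction argument of Campanato type. Suppose the conclusion fails: there exist sequences $r_i,\epsilon_i\to 0$ and $0<\kappa_i<\epsilon_i$, model metrics $\omega_{h_i}$ with $\|h_i\|\le\epsilon_i$, and Gromov-Hausdorff limit metrics $\eta_i=\omega_{h_i}+\ddb u_i$ satisfying all the hypotheses of the proposition, but for which no admissible pair $(\omega_{k_i},u_i')$ exists. I would rescale by $r_i^{-2}$, setting $\tilde\omega_i=r_i^{-2}\omega_{h_i}$ and $\tilde\eta_i=r_i^{-2}\eta_i$, and normalize potentials by $\kappa_i$, $\hat u_i=(r_i^2\kappa_i)^{-1}u_i$. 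On the rescaled unit ball $|\hat u_i|<1$, the Ricci curvature of $\tilde\eta_i$ satisfies $|\operatorname{Ric}|\le\kappa_i\to 0$, and the hypotheses $f_i(p_i)=0$ together with $|\nabla f_i|_{\eta_i}\le r_i^{-1}\kappa_i\epsilon_i$ give $|f_i|\le\kappa_i\epsilon_i$, so the normalized forcing $f_i/\kappa_i$ tends to $0$.

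Next I would extract a harmonic limit. By the hypothesis $|d_{\eta_i}-d_{\omega_{h_i}}|<r_i/100$ and property~(1) of Definition~\ref{defn:model}, the rescaled balls converge in the Gromov-Hausdorff sense to $B(o,1)\subset C(Y)$. On compact subsets of the regular part, Lemma~\ref{lemma:savin} applied to the rescaled Monge-Amp\`ere equation (whose data tend to zero in the required senses) yields uniform $C^{k,\alpha}$ bounds for $\hat u_i$, so a subsequence converges smoothly to a limit $u_\infty$ on the regular set. The crucial input across the singular set is Theorem~\ref{thm:nonconc}, which prevents the $\hat u_i$ from concentrating on shrinking neighborhoods of the singular set; combined with the Hausdorff codimension $4$ bound from~\cite{CCT} and the cutoff techniques of~\cite{Chiu}, $u_\infty$ extends to a bounded weakly harmonic function on $B(o,1)\subset C(Y)$. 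Passage from the nonlinear equation to Laplace's equation uses the smallness of $\kappa_i$ and $f_i/\kappa_i$ to dominate the higher-order terms in the expansion of the Monge-Amp\`ere operator.

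Now I would apply Lemma~\ref{lemma:HS} to decompose $u_\infty$ via its expansion in spherical harmonics on the link $Y$: $u_\infty=\sum_{d\ge 0}u_\infty^{(d)}$ with $u_\infty^{(d)}$ $d$-homogeneous harmonic. Terms with $d<2$ are pluriharmonic by part~(1), and $u_\infty^{(2)}=h_\infty+p_\infty$ with $h_\infty\in H$ and $p_\infty$ pluriharmonic by parts~(2) and~(3). Since the Laplacian on $Y$ has discrete spectrum, there is a gap $\alpha_0>0$ depending only on $C(Y)$ beyond the eigenvalue corresponding to $d=2$; fix $\alpha\in(0,\alpha_0)$. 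The tail $T=u_\infty-\sum_{d\le 2}u_\infty^{(d)}$ then satisfies $|T|\le C(4\lambda)^{2+\alpha_0}\le\tfrac{1}{2}\lambda^{2+\alpha}$ on $B(o,4\lambda)$ for $\lambda>0$ sufficiently small.

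To close the contradiction, set $k_i=h_i+\kappa_i h_\infty$, with $\|k_i-h_i\|\le C\kappa_i$, and let $\phi_i$ be the potential from Definition~\ref{defn:model}(4) realizing $\omega_{k_i}=\omega_{h_i}+\ddb\phi_i$. Using the holomorphic maps $F_i$ of Definition~\ref{defn:model}(1), lift the pluriharmonic parts $u_\infty^{(0)}+u_\infty^{(1)}+p_\infty$ to genuine pluriharmonic functions $P_i$ on $B_{\omega_{h_i}}(p,r_i)$ as real parts of polynomials of degree at most $2$ in the components of $F_i$. Setting $u_i'=u_i-\phi_i-r_i^2\kappa_iP_i$ we have $\omega_{h_i}+\ddb u_i=\omega_{k_i}+\ddb u_i'$ exactly, and by property~(5) the rescaled $(r_i^2\kappa_i)^{-1}\phi_i$ converges to $h_\infty$ on compacts of the regular set. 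Hence $(r_i^2\kappa_i)^{-1}u_i'\to T$, and on $B_{\omega_{k_i}}(p,4\lambda r_i)\subset B_{\omega_{h_i}}(p,r_i)$ (an inclusion ensured by property~(3) once $\epsilon_i$ is small) we obtain $|u_i'|\le\lambda^{2+\alpha}r_i^2\kappa_i$ for large $i$, the required contradiction. The main obstacle is the analytic step of extracting a harmonic limit through the singular set of $C(Y)$: without Theorem~\ref{thm:nonconc} the $\hat u_i$ could concentrate on shrinking neighborhoods of the singular set and leave no useful limit, and property~(5) of Definition~\ref{defn:model} is the complementary structural input that allows the $J(r\partial_r)$-invariant quadratic component of $u_\infty$ to be absorbed exactly into a change of model parameter.
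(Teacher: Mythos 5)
Your overall architecture matches the paper's proof: argue by contradiction, rescale by $r_i^{-2}$ and normalize by $\kappa_i$, extract a bounded harmonic limit on $B(0,1)\subset C(Y)$ using Lemma~\ref{lemma:savin} on the regular set and the codimension~$\ge 4$ singular set, decompose the limit via Lemma~\ref{lemma:HS} into pluriharmonic, $H$-, and faster-than-quadratic pieces, absorb the $H$-piece into a new model metric $\omega_{k_i}$ with $k_i=h_i+\kappa_i h_{\mathrm{aut}}$ using properties (4)--(5) of Definition~\ref{defn:model}, lift the pluriharmonic piece through the holomorphic maps of property (1), and exploit the spectral gap above degree $2$ to get the $\lambda^{2+\alpha}$ decay. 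Two points, however, are not justified as written, and one of them is the crux of the argument. First, the minor one: the hypothesis $|d_{\eta_i}-d_{\omega_{h_i}}|<r_i/100$ together with property (1) does not by itself give Gromov--Hausdorff convergence of the rescaled $\eta_i$-balls to $B(0,1)$; the paper upgrades the $r/100$-closeness to a $\Psi(i^{-1})$ distance estimate via Lemma~\ref{lemma:ghestimate}, which you never invoke.

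The genuine gap is in your final step. The convergence $(r_i^2\kappa_i)^{-1}u_i'\to T=h^{>2}$ that you obtain is only locally smooth on compact subsets of the regular set, so it yields no control of $u_i'$ on the portion of $B_{\omega_{k_i}}(p,4\lambda r_i)$ near the singular set; the conclusion $|u_i'|\le\lambda^{2+\alpha}r_i^2\kappa_i$ on the whole small ball therefore does not follow from what you proved. This is precisely where Theorem~\ref{thm:nonconc} must be applied: after rescaling to unit size one works with the reference metric $\tilde\eta_i$ (which, unlike the rescaled model metric $\tilde\omega_i$, is a non-collapsed Gromov--Hausdorff limit and hence admissible for the theorem), uses the equation $(\tilde\eta_i-\ddb U_i')^n=e^{-f_i}\tilde\eta_i^n$, bounds $U_i'$ on $\{\tilde r_h>2\delta\}$ by $C\kappa_i\lambda^{2\alpha}$ from the regular-set convergence (after comparing the harmonic radii of $\tilde\eta_i$ and $\tilde\omega_i$ via Lemma~\ref{lemma:savin}), and chooses $\gamma\sim\lambda^{2+2\alpha}$ to absorb the term $\gamma\sup|U_i'|$. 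You instead invoke Theorem~\ref{thm:nonconc} in the limit-extraction step, where it is not needed: the uniform bound $|\hat u_i|\le 1$ together with the codimension bound already gives the bounded harmonic extension of the limit. The indispensable use of non-concentration is the sup bound on $u_i'$ across the singular set on the small ball, and your argument skips exactly that application.
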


We remark that the advantage of working with a bound for the gradient
$|\nabla f|_\eta$, rather than with the sup norm $|f|$, is that after
scaling the gradient bound improves. At the same time, using the
estimate for the distance function of $\eta$, the gradient bound
together with the condition $f(p)=0$ implies a corresponding bound
$|f| < 4\kappa\epsilon$. 

\begin{proof}
  We argue by contradiction, so suppose there are
  $ \epsilon_i, r_i \to 0, \kappa_i < \epsilon_i$ and corresponding
  $h_i, \eta_i,u_i,f_i$ with
  $\|h_i\| \le \kappa_i, |u_i| < r_i^2\kappa_i, |\nabla f|_{\eta_i} <
  \kappa_i\epsilon_i$ such that no suitable $\alpha, \lambda$
  exist. We will show by passing to a limit that for large enough $i$,
  the statement actually holds for some $\alpha, \lambda$, thus
  reaching a contradiction. The argument is similar to the proof of
  Proposition~4.1 in \cite{Sz20}. In the following $C > 0$ will denote
  a uniform constant, whose value may change from line to line.

  Let us scale up the
  metrics by defining $\tilde\eta_i = r_i^{-2}\eta_i$,
  $\omega_i = r_i^{-2}\omega_{h_i}$ and $\tilde{u}_i =
  r_i^{-2}u_i$. By the gradient bound for $f_i$ and the estimate for
  $|d_{\omega_{h_i}}-d_{\eta_i}|$ we see that
  $|f_i| < 2\kappa_i\epsilon_i$ on $B_{\omega_i}(p,1.9)$. Note that
  $\tilde{u}_i$ satisfies
  \[
    (\omega_i + \ddb \tilde{u}_i)^n = e^{f_i}\omega_i^n,
  \]
  with $|\tilde{u}_i| \le \kappa_i$ on $B_{\omega_i}(p,1.9)$. By
  Lemma~\ref{lemma:ghestimate}, we have
  \[\label{eq:etaomega}
    |d_{\tilde\eta_i}-d_{\omega_i}| < \Psi(i^{-1})
  \]
  on $B_{\omega_i}(p, 1)$ once $i$ is sufficiently large. It follows
  from \eqref{eq:etaomega} and property~(1) of
  Definition~\ref{defn:model} that both $B_{\eta_i}(0,1)$ and
  $B_{\omega_i}(0,1)$ converge to $B(0,1)$ in the Gromov-Hausdorff
  sense.
  
  By Lemma~\ref{lemma:savin}, for all sufficiently large $i$ we have
  $\|\tilde{u}_i\|_{C^{k,\alpha}(A)} \le C_{k,A}\kappa_i$ on any
  compact subset $A$ of the regular set of $B_{\omega_i}(p,1)$. So by
  passing to a subsequence, $\kappa_i^{-1}\tilde{u}_i$ converges
  locally smoothly to a function $h$ on the regular set, satisfying
  $|h| \le 1$. On the other hand, writing the equation for
  $\tilde{u}_i$ in the form of Equation~\eqref{eq:eqnexpand}, we find
  that away from the singular set, $h$ is a harmonic function on
  $B(0,1)$ with respect to the cone metric
  $\omega_{C(Y)} = \frac{1}{2}\ddb r^2$. Since $|h| \le 1$ and the
  singular set has codimension at least four, $h$ extends as a
  harmonic function across the singular set as well.

  We can decompose $h$ into a sum of homogeneous harmonic functions on
  the cone $C(Y)$, and we write $h = h^{\le 2} + h^{>2}$, where
  $h^{\le 2}$ collects the components with at most quadratic growth
  and $h^{>2}$ is the rest. By Lemma~\ref{lemma:HS} we can further
  decompose $h^{\le 2} = h_{ph} + h_{aut}$, where $h_{ph}$ is
  pluriharmonic and $h_{aut} \in H$. Since $h_{ph}$ is pluriharmonic,
  $h_{ph}$ is the real part of a holomorphic function, which
  is a restriction of a holomorphic function on $\mathbf{C}^N$. Using
  the biholomorphisms in property (1) of Definition~\ref{defn:model},
  it follows that $h_{ph}$ also defines a pluriharmonic function
  $h_{ph,i}$ on the scaled-up ball $B_{\omega_i}(p,1)$ and $h_{ph,i}$
  converges uniformly in the Gromov-Hausdorff sense to $h_{ph}$.

  We
  now write down the new potential. For this let us define
  $k_i = h_i + \kappa_ih_{aut} \in H$. For sufficiently large $i$ we
  have $k_i \in U$. Consider the corresponding model metric
  $\omega_{k_i}$. By property~(4) of Definition~\ref{defn:model} we
  have $\omega_{k_i} = \omega_{h_i} + \ddb v_i$ with
  \[\label{eq:vibound}|v_i| \le C\|k_i-h_i\|r^2 \le C\kappa_ir^2\]
  on $B_{\omega_{k_i}}(0,r)$. Let us define
  $\tilde\omega_i = r_i^{-2}\omega_{k_i}$. By property~(3) of
  Definition~\ref{defn:model}, we have
  \[\label{eq:tildedistance}
    |d_{\tilde\omega_i}-d_{\omega_i}| \le C\epsilon_i
  \]
  on $B_{\tilde\omega_i}(p, 1)$.  

  Now we switch our reference metric from $\omega_{h_i}$ to
  $\omega_{k_i}$. We have
  \[
    \eta_i &= \omega_{h_i} + \ddb u_i \\
    &= \omega_{k_i} + \ddb (u_i - v_i -r_i^2\kappa_ih_{ph,i}) \\
    &= \omega_{k_i} + \ddb u_i',
  \]
  where we define $u_i' = u_i - v_i -r_i^2\kappa_ih_{ph,i}$. By the
  estimate \eqref{eq:vibound} for $v_i$ and the assumption of $u_i$ it
  follows that on $B_{\omega_{h_i}}(p,2r_i)$ we have
  \[\label{eq:uiprime}|u_i'| \le C\kappa_ir_i^2.\] By property~(3) of
  Definition~\ref{defn:model} it follows that the same estimate also
  holds on $B_{\omega_{k_i}}(p,r_i)$. Let us define
  $\tilde{u}_i' = r_i^{-2}u_i'$. Then $\kappa_i^{-1}\tilde{u}_i'$
  converges to $h^{>2}$ over compact subsets of the regular set of
  $B_{\tilde\omega_i}(p,0.8)$. To see this, let $A$ be a compact subset
  of the regular set of $B_{\tilde\omega_i}(p,0.8)$. Using the
  Gromov-Hausdorff approximations as in property (5) of
  Definition~\ref{defn:model}, we compute
  \[
    |\kappa_i^{-1}\tilde{u}_i'-h^{>2}|
    &\le |\kappa_i^{-1}\tilde{u}_i - h| + |h-r_i^{-2}\kappa_i^{-1}v_i-h_{pi,i}-h^{>2}| \\
    &\le \Psi(i^{-1}) + |h_{ph}-h_{ph,i}| + |r_i^{-2}\kappa_i^{-1}v_i - h_{aut}| \\
    &\le \Psi(i^{-1}) + \kappa_i^{-1}|r_i^{-2}v_i-\kappa_ih_{aut}| \\
    &\le \Psi(i^{-1}) + \kappa_i^{-1}\Psi(i^{-1})|\kappa_ih_{aut}| \\
    &\le \Psi(i^{-1}).
  \]
  The second inequality uses the fact that $\kappa_i^{-1}\tilde{u_i}$
  converges to $h$, while the second to last inequality uses
  property~(5) of Definition~\ref{defn:model}. We will show that
  $\tilde{u}_i'$ is much smaller than $\kappa_i$ on a smaller ball,
  using that it is modeled on a harmonic function of growth rate
  strictly greater than $2$. Away from the singular set this follows
  from the convergence $\kappa_i^{-1}\tilde{u}_i' \to h^{>2}$ as shown
  above. To extend this estimate across the singular set we need to
  apply the non-concentration result in the previous section.

  Let us first make precise the required decay for $h^{>2}$. Define
  the normalized $L^2$ norm of a function $f$ on a ball $B$ by
  $\|f\|^2_B = \mathrm{vol}(B)^{-1}\int_B f^2$. Since $h^{>2}$ has
  faster than quadratic growth, there is an $\alpha > 0$ depending
  only on the cone $C(Y)$ such that
  \[
    \|h^{>2}\|_{B(0,16r)} \le Cr^{2+2\alpha}\|h^{>2}\|_{B(0,1)}
  \]
  for any small $r>0$. By the mean value inequality for harmonic
  functions,
  \[
    \sup_{B(0,8r)} |h^{>2}| \le C\|h^{>2}\|_{B(0,16r)} \le Cr^{2+2\alpha}.
  \]
  We think of $r$ as fixed, to be chosen below.

  To apply the non-concentration result in the previous section, we
  need to work with respect to $\tilde\eta_i$ instead of
  $\tilde\omega_i$, since $\tilde\omega_i$ in general is
  not a Gromov-Hausdorff limit, while $\tilde\eta_i$ is. By
  property~(3) of Definition~\ref{defn:model} and the estimate
  \eqref{eq:etaomega}, we see that for $i$ sufficiently large, on
  $B_{\tilde\eta_i}(p,1)$ we have
  \[\label{eq:etaiomegai}
    |d_{\tilde\eta_i} - d_{\tilde\omega_i}| < r.
  \]
  
  Let us now scale up by $(16r)^{-1}$, replacing $\tilde\omega_i$ by
  $(16r)^{-2}\tilde\omega_i$ and $\tilde\eta_i$ by
  $(16r)^{-2}\tilde\eta_i$. Define $U_i' =
  (16r)^{-2}r_i^{-2}u_i'$. From \eqref{eq:uiprime} we have
  $|U_i'| \le C\kappa_ir^{-2}$ on $B_{\tilde\omega_i}(p, 2)$. So by
  \eqref{eq:etaiomegai} we have $|U_i'| \le C\kappa_ir^{-2}$ on
  $B_{\tilde\eta_i}(p, 1)$. Let $r_h$ denote the harmonic radius of
  $\tilde\omega_i$, and let $\delta > 0$, whose value is to be
  determined later. On $\{r_h > \delta\}$, $U_i'$ converges smoothly
  to $(16r)^{-2}\kappa_ih^{>2}$. So on
  $\{r_h > \delta\} \cap B_{\tilde\omega_i}(p, 2)$ we have
  \[ |U_i'| < Cr^{2\alpha}\kappa_i. \] Let $\tilde{r}_h$ be the
  harmonic radius of the metric $\tilde\eta_i$. By
  Lemma~\ref{lemma:savin}, for $i$ sufficiently large we have
  $\{\tilde{r}_h > 2\delta\} \subset \{ r_h > \delta \}$. It follows
  that
  \[
    \sup_{B_{\tilde\eta_i}(p,1) \cap \{ \tilde{r}_h > 2\delta \}} |U_i'| \le Cr^{2\alpha}\kappa_i.
  \]

  Note that on $B_{\tilde\eta_i}(p,1)$, using property~(2) of
  Definition~\ref{defn:model} we see that $U_i'$ satisfies the equation
  \[
    (\tilde\eta_i - \ddb U_i')^n = e^{-f_i} \tilde\eta_i^n,
  \]
  and we have $|f_i| < 2\kappa_i\epsilon_i$. We are now ready to apply
  the non-concentration theorem, Theorem~\ref{thm:nonconc}. Given
  $\gamma > 0$, Theorem~\ref{thm:nonconc} implies that there exists
  $\delta > 0$ such that
  \[
    \sup_{B_{\tilde\eta_i}(p,0.5)}|U_i'| &\le C\left(
      \sup_{B_{\tilde\eta_i}(p,1) \cap \{ \tilde{r}_h > 2\delta \}} |U_i'| +
      \sup_{B_{\tilde\eta_i}(p,1)} |f_i| + \gamma \sup_{B_{\tilde\eta_i}(p,1)}|U_i'|
    \right) \\
    &\le C(\kappa_ir^{2\alpha} + \kappa_i\epsilon_i + \gamma \kappa_ir^{-2}).
  \]
  Choosing $\gamma = r^{2+2\alpha}$ and $i$ sufficiently large so that
  also $\epsilon_i \le r^{2\alpha}$, we then have
  \[
    \sup_{B_{\tilde\eta_i}(p,0.5)}|U_i'| \le C\kappa_i r^{2\alpha}.
  \]
  Scaling back this estimate, we find that for sufficiently large $i$
  (depending on $r$), we have
  \[
    \sup_{B_{\tilde\eta_i}(p,8r)}|U_i'| \le C\kappa_i r^{2+2\alpha}.
  \]
  By the distance estimates \eqref{eq:etaomega} and
  \eqref{eq:tildedistance} it follows that
  \[
    \sup_{B_{\tilde\omega_i}(p,4r)}|U_i'| \le C\kappa_i r^{2+2\alpha}
  \]
  once $i$ is sufficiently large. We can now choose $r = \lambda$
  small enough so that
  \[
    \sup_{B_{\tilde\omega_i}(p,4\lambda)}|U_i'| \le \kappa_i \lambda^{2+\alpha}.
  \]
  Scaling down by $r_i$, we get
  \[
    \sup_{B_{\omega_{k_i}}(p,4\lambda r_i)}|u_i'| \le \kappa_i \lambda^{2+\alpha}r_i^2.
  \]
  This gives the required contradiction.
\end{proof}

We can now state the abstract convergence result. To do so, we need
the following definition. Recall that $Z$ is a non-collapsed
Gromov-Hausdorff limit of polarized K\"ahler-Einstein manifolds,
$\omega_Z$ is the singular K\"ahler-Einstein metric on $Z$, $p \in Z$,
and the tangent cone at $p$ is $C(Y)$. Assume that $\mathcal{U}$ is a
neighborhood of $p$, and on $\mathcal{U}$ there is a family
$\mathcal{F}$ of model metrics.

\begin{definition}
  \label{defn:modelapprox}
  We say that $\omega_Z$ can be approximated by $\mathcal{F}$ if the
  following holds. Fix any $0 < \kappa < \epsilon$. Then for all
  $r > 0$ sufficiently small, there exist $\Lambda > 0$ and an
  embedding $F: B_{\omega}(p,2r) \subset \mathcal{U} \to Z$ from the
  ball with respect to $\omega = \omega_0 \in \mathcal{F}$ such that
  $F(p) = 0$ with the following properties. Let
  $\eta = \Lambda F^*\omega_Z$. Then on $B_{\omega}(p,2r)$, the
  following hold:
  \begin{enumerate}
  \item $\mathrm{Ric}(\eta) = c\eta$ with $|c| < r^{-2}\epsilon$.
  \item $\eta^n = e^f\omega^n$ and $\eta = \omega + \ddb u$,
    with
    \[|u| < r^2\kappa,\:\:\: f(p) = 0,\:\:\:|\nabla f|_{\eta} < r^{-1}\kappa\epsilon.\]
  \item $|d_\eta - d_{\omega}| < r/100$.
  \end{enumerate}
\end{definition}

\begin{prop}
  \label{prop:convergence}
  Suppose that at $p \in Z$, $\omega_Z$ can be approximated by a
  family of $\mathcal{F}$ of model metrics in a neighborhood
  $\mathcal{U} \subset Z$ of $p$. Then for some $r_0 > 0$, there is a
  model metric $\omega \in \mathcal{F}$ and a holomorphic embedding
  $F: B_{\omega}(p, r_0) \to Z$, with $F(p) = p$, and constants
  $\Lambda, C, \alpha > 0$, such that
  \[
    \Lambda F^*\omega_Z = \omega + \ddb u_r
  \]
  for some $u_r$ defined on $B_\omega(p,r)$, and
  \[
    \sup_{B_\omega(p,r)} |u_r| \le Cr^{2+\alpha}
  \]
  for all $r < r_0$.
\end{prop}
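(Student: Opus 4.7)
The plan is to iterate Proposition~\ref{prop:decay} at a geometric sequence of scales $r_k = (2\lambda)^k r_0$, producing a Cauchy sequence of model metrics $\omega_{h_k} \in \mathcal{F}$ whose limit $\omega = \omega_{h_\infty}$ realizes the required approximation. Definition~\ref{defn:modelapprox}, applied with a small choice of $\kappa_0 < \epsilon$ and $r_0 > 0$, provides the embedding $F$, the scaling constant $\Lambda$, and an initial decomposition $\eta := \Lambda F^*\omega_Z = \omega_0 + \ddb u_0$ on $B_{\omega_0}(p, 2r_0)$ with $|u_0| < r_0^2\kappa_0$ together with the distance, Ricci, and gradient-of-$f$ bounds required as hypotheses of Proposition~\ref{prop:decay}. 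This furnishes the base case with $h_0 = 0 \in H$.

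Suppose inductively that we have constructed $h_k \in H$ and a potential $u_k$ on $B_{\omega_{h_k}}(p, 2r_k)$ with $\eta = \omega_{h_k} + \ddb u_k$ and $|u_k| \le r_k^2 \kappa_k$, where $\kappa_k = (\lambda^\alpha/4)^k \kappa_0$, and all the hypotheses of Proposition~\ref{prop:decay} hold at scale $r_k$. Applying Proposition~\ref{prop:decay} produces $h_{k+1} \in H$ with $\|h_{k+1} - h_k\| \le C\kappa_k$ and a potential $u_{k+1}$ on $B_{\omega_{h_{k+1}}}(p, 2r_{k+1})$ satisfying $|u_{k+1}| \le r_{k+1}^2 \kappa_{k+1}$. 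The main work is re-verifying the hypotheses of Proposition~\ref{prop:decay} at each new scale. Property~(2) of Definition~\ref{defn:model} makes $\omega_h^n$ independent of $h$, so the function $f$ defined by $\eta^n = e^f \omega_{h_k}^n$ does not depend on $k$; hence $f(p) = 0$ is inherited automatically, and the gradient bound $|\nabla f|_\eta < r_{k+1}^{-1}\kappa_{k+1}\epsilon$ is preserved provided the ratio $r_k/\kappa_k = (r_0/\kappa_0)(8\lambda^{1-\alpha})^k$ is non-increasing, which is arranged by taking $\alpha < 1$ and $\lambda$ small. The Ricci bound $|c| \le r_{k+1}^{-2}\kappa_{k+1}$ is similarly preserved under the condition $\lambda^{2-\alpha} \le 1/16$. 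The distance bound $|d_\eta - d_{\omega_{h_{k+1}}}| < r_{k+1}/100$ is the subtlest point: we first apply Lemma~\ref{lemma:ghestimate} at scale $r_k$ to upgrade the inductive bound $|d_\eta - d_{\omega_{h_k}}| < r_k/100$ to $|d_\eta - d_{\omega_{h_k}}| \ll r_{k+1}$, and then combine this with property~(3) of Definition~\ref{defn:model} to compare $\omega_{h_k}$ with $\omega_{h_{k+1}}$.

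Summability of $\|h_{k+1} - h_k\| \le C\kappa_k$ yields a limit $h_\infty \in H$, and we set $\omega = \omega_{h_\infty}$. For any $r \in (0, r_0)$, choose $k$ with $r_{k+1} \le r < r_k$. By property~(4) of Definition~\ref{defn:model} we can write $\omega = \omega_{h_k} + \ddb v_k$ with $|v_k| \le C\|h_\infty - h_k\|\, r^2 \le C\kappa_k r^2$ on the relevant ball; using property~(3) to relate the balls $B_\omega(p, r)$ and $B_{\omega_{h_k}}(p, r(1+\Psi))$, we obtain $\eta = \omega + \ddb u_r$ on $B_\omega(p, r)$ with
\[
|u_r| \le |u_k| + |v_k| \le C r_k^2 \kappa_k \le C r^{2+\alpha'}
\]
for any fixed $\alpha' \in (0, \alpha)$, by the geometric decay of $\kappa_k$ relative to $r_k$. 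The principal obstacle is thus the maintenance of the distance hypothesis of Proposition~\ref{prop:decay} across infinitely many stages of the iteration---precisely the role for which Lemma~\ref{lemma:ghestimate} was set up---after which the remaining bookkeeping and the summability of the construction deliver the desired $r^{2+\alpha}$ decay.
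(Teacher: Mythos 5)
Your proposal is correct and follows essentially the same route as the paper's proof: iterate Proposition~\ref{prop:decay} while using Lemma~\ref{lemma:ghestimate} and property~(3) of Definition~\ref{defn:model} to maintain the distance hypothesis at each scale, sum the bounds $\|h_{k+1}-h_k\|\le C\kappa_k$ to obtain a limiting model metric, and convert via property~(4). The only differences are cosmetic bookkeeping (scale ratio $2\lambda$ versus $\lambda$, a marginally reduced exponent $\alpha'<\alpha$, and your explicit verification of the Ricci and $|\nabla f|$ hypotheses, which the paper leaves implicit along with the choice $\kappa< C^{-1}(1-\lambda^{\alpha})\epsilon$ keeping all $\|h_k\|\le\epsilon$).
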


\begin{proof}
  We iterate the decay estimate, Proposition~\ref{prop:decay}, as well
  as the distance estimate, Lemma~\ref{lemma:ghestimate}. Let
  $C, \alpha$ and $\lambda$ be the constants in
  Proposition~\ref{prop:decay}, and let $\epsilon, r$ be sufficiently
  small so that both Lemma~\ref{lemma:ghestimate} and
  Proposition~\ref{prop:decay} hold. At the initial stage we let
  $\kappa < C^{-1}(1-\lambda^\alpha)\epsilon$. Exactly how small
  $\epsilon$ should be will be clear later. By letting $r$ be smaller
  if necessary (depending on $\kappa, \epsilon$), we have the
  corresponding approximation $F: B_\omega(p,4r) \to Z$, where
  $\omega = \omega_0 \in \mathcal{F}$, with constant $\Lambda >
  0$. Write $\eta = \Lambda F^*\omega_Z$. Then
  Lemma~\ref{lemma:ghestimate} implies that we have
  $|d_\eta - d_\omega| < \lambda r / 200$ on the ball
  $B_\omega(p, 2r)$. We write $h_0 = 0$.

  Applying Proposition~\ref{prop:decay} we have a model metric
  $\omega_1 = \omega_{h_1}$, with $\|h_1\|\leq C\kappa \le \epsilon$,
  and a function $u_1$ on $B_{\omega}(p,r)$ such that
  $\eta = \omega_{h_1} + \ddb u_1$, and
  \[
    \sup_{B_\omega(p,4\lambda r)} |u_1| \le \lambda^{2+\alpha}r^2\kappa.
  \]
  By property~(3) of Definition~\ref{defn:model}, it follows that
  \[
    \sup_{B_{\omega_1}(p,2\lambda r)} |u_1| \le \lambda^{2+\alpha}r^2\kappa.
  \]
  Also by property~(3) of Definition~\ref{defn:model}, on
  $B_{\omega_1}(p,2\lambda r)$ we have
  \[
    |d_{\omega_0} - d_{\omega_1}| \le C_1(\|h_1\|+\|h_0\|)\lambda r
    \le 2C_1\epsilon \lambda r
    \le \frac{\lambda r}{200}
  \]
  if we choose $\epsilon$ to be sufficiently small. Consequently, on
  $B_{\omega_1}(p,2\lambda r)$ we have
  \[
    |d_\eta-d_{\omega_1}| \le |d_\eta-d_\omega| + |d_\omega - d_{\omega_1}| \le \frac{\lambda r}{100}.
  \]
  The metrics $\eta$ and $\omega_1$ now satisfy the conditions of
  Lemma~\ref{lemma:ghestimate} and Proposition~\ref{prop:decay}, with
  $r$ replaced by $\lambda r$ and $\kappa$ by
  $\lambda^\alpha\kappa$. We can iterate this construction and we
  obtain a sequence of model metrics $\omega_i = \omega_{h_i}$ with
  $\|h_{i+1}-h_i\| \le C(\lambda^\alpha)^i\kappa$ such that on
  $B_{\omega_i}(p, 2\lambda^ir)$ we have $\eta = \omega_i + \ddb u_i$
  with
  \[ \sup_{B(p, 2\lambda^ir)} |u_i| \le (\lambda^i)^{2+\alpha}\kappa r^2. \]

  The harmonic functions $h_i$ converge to a harmonic function $k$
  satisfying $\Vert k\Vert \leq \epsilon$, so $k\in U$ if $\epsilon$
  is chosen small enough. Let $\tilde\omega = \omega_k$ be the
  corresponding model metric. By property~(4) of
  Definition~\ref{defn:model}, there exists $v_i$ on
  $B_{\tilde\omega}(0,1)$ such that
  \[
    \omega_i - \tilde\omega = \ddb v_i,
  \]
  with
  \[
    \sup_{B_{\tilde\omega}(p, \lambda^i r)}|v_i| \le
    C_2\|k - h_i\|(\lambda^i r)^2 \le C_3(\lambda^i)^{2+\alpha} r^2\kappa.
  \]
  So on $B_{\tilde\omega}(0, \lambda^i r)$ we have
  \[
    \eta = \omega_i + \ddb u_i  = \tilde\omega + \ddb (u_i+ v_i) = \tilde\omega + \ddb \tilde{u}_i,
  \]
  where $\tilde{u}_i = u_i + v_i$. Then $\tilde{u}_i$ satisfies
  \[
    \sup_{B_{\tilde\omega}(0, \lambda^ir)} |\tilde{u}_i| \le (1+C_3)\kappa(\lambda^i)^{2+\alpha}r^2
    \le C'(\lambda^ir)^{2+\alpha},
  \]
  where $C' = (1+C_3)\kappa r^{-\alpha}$, and so $\tilde\omega$ and $\tilde{u}_i$
  are as required.
\end{proof}

\section{K-polystable singularities}
\label{sec:polystable}

Suppose, as above, that $(Z,p)$ is the non-collapsed pointed
Gromov-Hausdorff limit of a sequence of polarized K\"ahler-Einstein
manifolds, with its singular K\"ahler-Einstein metric denoted by
$\omega_Z$.  Let $C(Y)$ denote the metric tangent cone to $Z$ at $p$.
In this section we assume that the germ $(Z,p)$ is isomorphic to the
germ $(C(Y), o)$, where $o$ is the vertex of the cone $C(Y)$. In
particular this means that the affine variety $C(Y)$, equipped with
the homothetic vector field $\xi$ induced by the cone structure
defines a K-polystable Fano cone singularity $(C(Y), \xi)$ in the
terminology of Li-Wang-Xu~\cite{LWX}.

In this section we prove our first main result, Theorem~\ref{thm:main}, by
reducing it to Proposition~\ref{prop:convergence}. For this we need to
construct a family
$\mathcal{F}$ of model metrics on $C(Y)$ and then show that the
Gromov-Hausdorff limit $\omega_Z$ can be approximated by
$\mathcal{F}$.

The construction of $\mathcal{F}$ is fairly simple, since the model
space $C(Y)$ is already a cone. Let $H$ denote the space of quadratic
harmonic functions $h$ such that $\nabla h$ generates a biholomorphism
which commutes with scaling (see Lemma~\ref{lemma:HS}). For $h \in H$,
let $\phi(t)$ be the 
one-parameter group of biholomorphisms generated by
$\frac{1}{2}\nabla h$. By the gradient estimate and $h$ being
homogeneous with quadratic growth, we have
\[\label{eq:gradient}
  \sup_{B_{C(Y)}(0,r)} |\nabla h|_{\omega_{C(Y)}} \le
  C\sup_{B_{C(Y)}(0,2r)}|h|r^{-1} \le C\|h\|r
\]
for all $r>0$. Let $g = \phi(1)$ and define
$\omega_h = g^*\omega_{C(Y)}$. 

\begin{lemma}
  \label{lemma:conemodel}
  There exists a neighborhood $0 \in U \subset H$ such that
  $\mathcal{F} = \{\omega_h \mid h \in U \}$ is a family of model
  metrics.
\end{lemma}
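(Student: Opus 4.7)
The plan is to verify each of the five properties in Definition~\ref{defn:model} for the metrics $\omega_h = g_h^*\omega_{C(Y)}$, where $g_h = \phi_h(1)$ is the time-$1$ map of the flow of $\tfrac{1}{2}\nabla h$. The key structural input used throughout is that, by Lemma~\ref{lemma:HS}, $\nabla h$ commutes with the homothetic vector field $r\partial_r$, so $g_h$ commutes with the cone scaling maps $s_\lambda\colon C(Y)\to C(Y)$ and fixes the vertex $p$. I take $U\subset H$ small enough that $g_h$ extends to a biholomorphism of $\mathcal{U}$.

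Properties (1) and (2) should be the most direct. For (1), fix a holomorphic embedding $F_\infty\colon C(Y)\hookrightarrow \mathbf{C}^N$ by weighted-homogeneous generators of the coordinate ring. Because $g_{h_i}$ commutes with scaling and fixes $p$, the composition $f_i := s_{r_i^{-1}}\circ g_{h_i}$ is actually a biholomorphic isometry from $B_i = B_{r_i^{-2}\omega_{h_i}}(p,1)$ onto $B_{\omega_{C(Y)}}(0,1)\subset C(Y)$, so setting $F_i := F_\infty\circ f_i$ makes the Gromov-Hausdorff approximation condition $|F_i - F_\infty\circ f_i|=0$ hold trivially. For (2), since $h$ is harmonic we have $\mathrm{div}_{\omega_{C(Y)}}(\tfrac12\nabla h) = \tfrac12 \Delta h = 0$, so the flow of $\tfrac12\nabla h$ preserves the volume form $\omega_{C(Y)}^n$, giving $\omega_h^n = g_h^*(\omega_{C(Y)}^n) = \omega_{C(Y)}^n = \omega_0^n$.

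For (3) and (4) the main tool is the gradient estimate~\eqref{eq:gradient}. By Cartan's formula, $\mathcal{L}_{\frac{1}{2}\nabla h}\omega_{C(Y)} = \ddb h$, so integrating in $t$ yields $\omega_h - \omega_{C(Y)} = \ddb U_h$ with $U_h = \int_0^1 h\circ\phi_h(t)\, dt$. Hence $u := U_k - U_h$ is a potential for $\omega_k - \omega_h$, proving the first half of (4). Combining the quadratic bound on $h,k$ with the displacement estimate $d(\phi_h(t)(x), x)\le C\|h\|r$ on $B(0,r)$ (which follows from \eqref{eq:gradient} by Gr\"onwall, using that $\nabla h$ is scale-invariant so the cone-distance stays within a bounded factor along the flow) and a Lipschitz estimate for $\phi_h-\phi_k$, a direct estimate gives $|U_k - U_h|\le C\|k-h\|r^2$, completing (4). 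For (3), $g_h$ is a global isometry $(\mathcal{U},\omega_h)\to (g_h(\mathcal{U}),\omega_{C(Y)})$ with $g_h(p)=p$, so $|d_{\omega_h}(x,p) - d_{\omega_k}(x,p)| \le d_{\omega_{C(Y)}}(g_h(x), g_k(x))$, and another Gr\"onwall argument applied to the flow ODE bounds this last quantity by $C(\|h\|+\|k\|)r$.

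Property (5) is the main obstacle, since it requires a precise linearization of $U_k - U_h$ in $k-h$, not just the crude $\|k-h\|r^2$ bound used above. Using $f_i = s_{r_i^{-1}}\circ g_{h_i}$, the $2$-homogeneity of $k_i - h_i$ gives $f_i^*(k_i - h_i) = r_i^{-2}(k_i - h_i)\circ g_{h_i}$, so the task reduces to showing that on balls of radius $r_i$ one has $U_{k_i} - U_{h_i} - (k_i - h_i)\circ g_{h_i} = o(\|k_i - h_i\|)\,r_i^2$. Splitting this difference as $\int_0^1 [(k_i - h_i)\circ \phi_{h_i}(t) - (k_i - h_i)\circ g_{h_i}]\,dt + \int_0^1 [k_i\circ\phi_{k_i}(t) - k_i\circ\phi_{h_i}(t)]\,dt$ and applying the gradient estimate together with the displacement bounds $d(\phi_h(t)(x), g_h(x))\le C\|h\|r$ and $d(\phi_k(t)(x), \phi_h(t)(x))\le C\|k-h\|r$ (the latter again by Gr\"onwall on the difference of the flow ODEs), each integrand is bounded by $C(\|h_i\|+\|k_i\|)\|k_i - h_i\| r^2$ pointwise. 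Dividing by $r_i^2$ on $B_i$ and using $\|h_i\|,\|k_i\|\to 0$ finally yields the desired $\epsilon\|k_i - h_i\|$ bound, verifying~(5).
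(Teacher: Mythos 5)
Your handling of properties (1), (2), (3) and (5) is essentially the paper's argument (indeed your potential $U_h=\int_0^1 h\circ\phi_h(t)\,dt$ coincides exactly with the paper's $g_h^*(r^2/2)-r^2/2$, since $\nabla h\cdot\nabla(r^2/2)=2h$ by $2$-homogeneity). The genuine gap is in your proof of property (4), and it also silently underlies part of (5): both times you invoke a two-flow comparison $d\bigl(\phi_k(t)x,\phi_h(t)x\bigr)\le C\|k-h\|r$ obtained ``by Gr\"onwall on the difference of the flow ODEs''. Gr\"onwall here needs a Lipschitz bound for $\tfrac12\nabla h$ with respect to the cone metric along the comparison, i.e.\ a Hessian-type bound $|\nabla^2 h|_{\omega_{C(Y)}}\le C\|h\|$. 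The gradient estimate \eqref{eq:gradient} does not give this, and on a general tangent cone $C(Y)$ with non-isolated singular set (which is exactly the setting of Theorem~\ref{thm:main}) such a uniform bound up to the singular set is not available: interior elliptic estimates only give $|\nabla^2 h|\lesssim \|h\|\,d(\cdot,\Sigma)^{-1}$ near the singular set. For property (5) this does not matter, because the estimate is only required on a fixed compact subset $K$ of the regular set, where the flows stay in a slightly larger compact regular set and the Lipschitz constant is controlled; but property (4) demands $|u|\le C\|h-k\|r^2$ uniformly on all of $B_{\omega_h}(p,r)$, including arbitrarily close to $\Sigma$, so your argument as stated does not close there.

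The paper avoids this entirely by exploiting that the relevant vector fields form a finite-dimensional Lie algebra (the automorphisms of $C(Y)$ commuting with scaling, cf.\ Lemma~\ref{lemma:HS}(3)): by ``standard Lie theory'' one writes $g_hg_k^{-1}=g_{\tilde h}$ with $\tilde h=h-k+O(\|h-k\|\,\|h\|)$, and then all estimates reduce to displacement/potential estimates along a \emph{single} flow, which require only \eqref{eq:gradient} (differentiate $\phi_{\tilde h}(t)^*r^2$ and $d(0,\phi_{\tilde h}(t)x)$). Your two-flow bound is in fact true and your argument can be repaired the same way: write $\phi_h(t)\circ\phi_k(t)^{-1}$ as the time-one flow generated by an element of $H$ of norm $O(t\|h-k\|)$, and then apply your single-flow displacement estimate (which needs only \eqref{eq:gradient}) to conclude $|U_k-U_h|\le C\|h-k\|r^2$ globally. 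Without this (or some other justification of a uniform Lipschitz bound for $\nabla h$ near $\Sigma$), property (4) is not proved.
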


\begin{proof}
  For simplicity let us write $\omega = \omega_{C(Y)}$. We verify the
  properties in Definition~\ref{defn:model}. Property~(1) is automatic
  since $C(Y)$ is a cone itself. Property~(2) is satisfied since the
  automorphism $g$ is generated by $\nabla h$ for a harmonic function
  $h$.
  
  Let us consider property (3). Let $x,y \in B(0,r)$ be regular
  points. By differentiating $d_\omega(0, \phi(t)x)$
  and using \eqref{eq:gradient}, we see that
  \[\label{eq:conemodel1}
    d_\omega(0,\phi(t) x) \le e^{C\|h\|t}d_\omega(0, x).
  \]
  Similarly, by differentiating $d_{\omega}(x,\phi(t)x)$ and using
  \eqref{eq:gradient} and \eqref{eq:conemodel1} we see that
  \[\label{eq:xgx}
    d_\omega(x,gx) \le C(e^{C\|h\|}-1)d_\omega(0,x) \le C\|h\|d_\omega(0,x).
  \]
  For $x,y \in B_\omega(0,r)$, the triangle inequality together with
  \eqref{eq:xgx} gives
  \[
    |d_\omega(gx,gy) - d_\omega(x,y)| \le |d_\omega(x,gx) + d_\omega(y,gy)|
    \le C\|h\|(d_\omega(0,x)+d_\omega(0,y)).
  \]
  This proves property~(3).

  To see property (4), recall that $\omega$ as a cone
  metric is given by $\omega = \ddb (r^2/2)$, where $r$ is the
  distance to the vertex $0$. Differentiating $\phi(t)^*r^2$ and
  using \eqref{eq:conemodel1}, we get
  \[\label{eq:conepotential}
    |g^*r^2 - r^2| \le C\|h\| r^2.
  \]
  Now, let $g_h$ and $g_k$ denote the automorphisms generated by
  $h$ and $k$, respectively. Define
  \[
    u = g_k^*r^2 - g_h^*r^2 = g_k^*(r^2 - g^*r^2),
  \]
  where $g = g_hg_k^{-1}$. By standard Lie theory, for sufficiently
  small $h,k$, we have $g = g_{\tilde{h}}$ for some $\tilde{h} \in H$
  with $\tilde{h} = h - k + O(\Vert h-k\Vert \Vert h\Vert)$.
  Then \eqref{eq:conemodel1} and
  \eqref{eq:conepotential} together imply that
  \[
    |u| = |g_k^*(r^2 - g^*r^2)|
    \le C\|h-k\|g_k^*r^2
    \le C\|h-k\|r^2
  \]
  once $h,k$ are sufficiently small. Since
  $\omega_k = \omega_h + \ddb u$, this proves property~(4) of
  Definition~\ref{defn:model} for a sufficiently small neighborhood $U$
  of $0 \in H$.

  Finally, let us prove (5). Fix $K$ a compact set in
  the regular set of $B(0,1)$. Let $r_i \to 0$ and $h_i,k_i \in H$
  such that $\|h_i\|,\|k_i\| \to 0$. Let $K_i$ be compact sets in the
  regular set of $B_{r_i^{-2}\omega_{h_i}}(0,1)$ converging to $K$ in
  the Gromov-Hausdorff sense. Since $\omega_{h_i}$ is
  a cone metric, we may work as if $r_i = 1$. Thus on
  $B_{\omega_{h_i}}(0,1)$ we can simply take $K_i = g_i^{-1} K$. To
  simplify the notations we suppress the subscript $i$ in what
  follows. Let $\phi(t)$ and $\psi(t)$ be the flows of $\nabla h$ and
  $\nabla k$, respectively, and set $g_h = \phi(1)$ and
  $g_k = \psi(1)$. Then we have $\omega_k = \omega_h + \ddb u$ with
  $u = g_k^*(r^2/2) - g_h^*(r^2/2)$. If $\|h\|,\|k\|$ are sufficiently
  small (depending on $K$), then we can expand $\psi(t)^*r^2$ and
  $\phi(t)^*r^2$ as power series in $t$ for $t \in [0,1]$, whose
  coefficients depend on $\nabla h, \nabla k$ and the derivatives of
  $r^2$. As a consequence we have an estimate of the form
  \[\label{eq:conetaylor}
    |g_h^*r^2 - r^2 - \frac{1}{2}\nabla h (r^2)| \le C|\nabla h|^2_{\omega_h} \le C\|h\|^2
  \]
  on $K$, where the last inequality follows from
  \eqref{eq:gradient}. Note that since $h$ is homogeneous with
degree two, we have $\nabla h(r^2) = 4h$. 

  Now, if $h,k$ are sufficiently small, we have
  $\tilde h \in H$ as above. Using \eqref{eq:conetaylor}, we compute
  \[
    |g_k^*r^2-g_h^*r^2 - 2(k-h)|
    &\le g_k^*|r^2 - g_{\tilde{h}}^*r^2 + 2\tilde{h}| +
    2|(h-k) - g_k^*\tilde{h}| \\
    &\le C\|\tilde h\|^2 + C \Vert h-k\Vert \Vert h\Vert\\
    &\le \epsilon \|h-k\|
  \]
  for any $\epsilon > 0$ once $h,k$ are sufficiently small. This proves (5).
\end{proof}

It remains to show that $\omega_Z$ can be approximated by
$\mathcal{F}$.
As in Donaldson-Sun~\cite{DS17}, we
let $\lambda = 1/\sqrt{2}$, and let $(Z_i, p_i)$ denote $(Z, p)$
scaled up by a factor of $\lambda^{-i}$, which is still a pointed
Gromov-Hausdorff limit of polarized K\"ahler-Einstein manifolds. Let
$B_i$ denote the unit ball around $p_i$, i.e. the ball
$B(p, \lambda^i)$ scaled up to unit size. Let us denote the unit ball
in $C(Y)$ by $B$, and let $F_\infty : B\to \mathbf{C}^N$ be an
embedding given by an $L^2$-orthonormal set of homogeneous
functions. Using this embedding we will also view $C(Y)\subset
\mathbf{C}^N$. Since $C(Y)$ is the tangent cone at $p$, we have $B_i\to B$
in the Gromov-Hausdorff sense. We choose distance functions on the
disjoint unions $B_i\sqcup B$ realizing the Gromov-Hausdorff
convergence.

\begin{prop}\label{prop:spec1}
  For sufficiently large $i$ we have holomorphic maps $F_i: B_i \to
  \mathbf{C}^N$ satisfying the following properties, where
  $\Psi(i^{-1})$ denotes a function converging to zero as
  $i\to\infty$. 
  \begin{enumerate}
  \item Under the Gromov-Hausdorff approximations between $B_i$ and
    $B$ we have $|F_i -
    F_\infty| < \Psi(i^{-1})$, and the image $F_i(B_i)\subset
    C(Y)$.
  \item Let $\omega_i = (F_i^{-1})^* (\lambda^{-2i}\omega_Z)$ denote
    the metric on the image $F_i(B_i)$ induced by
    $\lambda^{-2i}\omega_Z$. Then we have $\mathrm{Ric}(\omega_i) =
    c_i\omega_i$ for some $|c_i| < \Psi(i^{-1})$, and  the distance functions $d_{\omega_i},
    d_{\omega_{C(Y)}}$ satisfy $|d_{\omega_i} - d_{\omega_{C(Y)}}| <
    \Psi(i^{-1})$. 
  \item We have $\omega_i^n = e^{f_i} \omega_{C(Y)}^n$  and $\omega_i =
    \omega_{C(Y)} + \ddb u_i$
    with $f_i(0)=0$ and  $|\nabla f_i|_{\omega_i}, |u_i| < \Psi(i^{-1})$.
  \end{enumerate}
  In particular $\omega_Z$ can be approximated by $\mathcal{F}$ in the
  sense of Definition~\ref{defn:modelapprox}. 
\end{prop}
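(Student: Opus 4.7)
The approach is to build $F_i$ explicitly by combining the germ biholomorphism provided by the hypothesis with the holomorphic $\mathbf{C}^*$-action on $C(Y)$ generated by the homothetic vector field. Fix a biholomorphism of germs $\Phi : (V,p) \to (W,o)$ with $V \subset Z$ and $W \subset C(Y)$, and take $F_\infty$ to be the embedding by an $L^2$-orthonormal basis of homogeneous holomorphic functions on $C(Y)$, so that $F_\infty(C(Y)) \subset \mathbf{C}^N$ is invariant under a diagonal $\mathbf{C}^*$-action whose weights are the homogeneities of the basis elements. Let $D_i \in \mathrm{GL}_N(\mathbf{C})$ denote scaling by $\lambda^{-i}$ in this weighted sense. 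For $i$ large enough that $B(p,\lambda^i)\subset V$, I would define
\[
 F_i = D_i \circ F_\infty \circ \Phi \colon B_i \longrightarrow C(Y) \subset \mathbf{C}^N,
\]
which is holomorphic and automatically has image in $C(Y)$, disposing of half of property~(1).

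The heart of the argument is to verify $|F_i - F_\infty| < \Psi(i^{-1})$ under the Gromov--Hausdorff approximations. For this I would invoke the uniqueness of the tangent cone as a polarized affine variety from Donaldson--Sun~\cite{DS17}, Li--Xu~\cite{LX} and Li--Wang--Xu~\cite{LWX}, which canonically attaches $C(Y)$ to the germ $(Z,p)$. Together with the K-polystability of $(C(Y),\xi)$, this implies that after possibly precomposing $\Phi$ with an automorphism of $C(Y)$ commuting with the homothetic vector field, the rescaled maps $D_i \circ F_\infty \circ \Phi$ agree to leading order with the two-step embeddings that Donaldson--Sun use to realize the tangent-cone convergence; this gives $F_i \to F_\infty$ in the required sense, together with the pointed Gromov--Hausdorff convergence
\[
 (Z, \lambda^{-2i}\omega_Z, p) \longrightarrow (C(Y), \omega_{C(Y)}, o),
\]
and hence $|d_{\omega_i} - d_{\omega_{C(Y)}}| < \Psi(i^{-1})$. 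The Ricci bound in (2) is immediate since $\omega_i$ is the pullback of $\lambda^{-2i}\omega_Z$, which is K\"ahler--Einstein with Einstein constant $\lambda^{2i}\lambda_Z$.

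For (3), on the regular part of $C(Y)$ both $\omega_i$ and $\omega_{C(Y)}$ are K\"ahler, so the local $\ddb$-lemma produces $u_i$ with $\omega_i = \omega_{C(Y)} + \ddb u_i$, obtained concretely as the difference of explicit K\"ahler potentials and extended continuously across the singular locus using boundedness. Smooth convergence of $\omega_i$ to $\omega_{C(Y)}$ on compact subsets of the regular set gives $|u_i| \to 0$ there, and Theorem~\ref{thm:nonconc} upgrades this to the uniform bound $|u_i| < \Psi(i^{-1})$ by ruling out concentration near the singular set. The volume-form ratio $f_i = \log(\omega_i^n/\omega_{C(Y)}^n)$ satisfies $\ddb f_i = -c_i\omega_i$, so fixing $f_i(0) = 0$ and combining elliptic regularity with the smooth convergence on the regular part produces $|\nabla f_i|_{\omega_i} < \Psi(i^{-1})$. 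The principal obstacle is matching the ad hoc map $F_i$ built from $\Phi$ with the canonical Donaldson--Sun embeddings; this is precisely where the uniqueness of the tangent cone in the K-polystable setting from \cite{DS17, LX, LWX} is essential. Once that compatibility is secured, the remaining estimates reduce to standard K\"ahler--Einstein regularity away from the singular set together with the non-concentration theorem of Section~\ref{sec:nonc}.
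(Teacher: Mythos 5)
Your construction $F_i = D_i\circ F_\infty\circ\Phi$ makes the inclusion $F_i(B_i)\subset C(Y)$ trivial, but it shifts all of the difficulty into the claim $|F_i-F_\infty|<\Psi(i^{-1})$, and this is exactly where your argument has a genuine gap. Citing uniqueness of the tangent cone from \cite{DS14,DS17,LX,LWX} does not give the asserted compatibility ``to leading order'' between your maps and the Donaldson--Sun embeddings. Two concrete problems: first, the valuation $d_{KE}$ only controls $\sup_{B(p,r)}|z_a\circ\Phi|$ up to factors $r^{\pm\epsilon}$, so the rescaled components $\lambda^{-id_a}(z_a\circ\Phi)$ need not stay bounded or bounded away from zero without comparing them to the $L^2$-normalized adapted bases; second, even granting convergence along a subsequence, the limit is some embedding of $C(Y)$ by degree-$d_a$ homogeneous functions, which differs from $F_\infty$ by elements of $GL(N)^T$ (equivalently automorphisms of the cone commuting with the torus), and a priori one needs a \emph{sequence} of such corrections which could drift or degenerate. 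Showing these corrections can be taken to converge to the identity is the heart of the matter; in the paper this is done by decomposing the adapted bases $G^a_i=g^a_i+k^a_i$, proving $\|k^a_i\|\to 0$ by an iteration using $\mu_{ia}\to\lambda^{d_a}$, and then invoking reductivity of the automorphism group of $C(Y)$ together with a Luna slice argument on the multigraded Hilbert scheme to replace the discrepancy matrices $g_i\in GL(N)^T$ by corrections tending to the identity with $h_ig_iC(Y)=C(Y)$. Your proposal assumes this conclusion rather than proving it, and ``precomposing $\Phi$ with a single automorphism'' is not known to suffice without such an argument.

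There are also gaps in your treatment of part (3). The potential $u_i$ is only determined up to pluriharmonic functions, and an unspecified local $\ddb$-lemma choice need not be small; moreover you cannot use Theorem~\ref{thm:nonconc} to \emph{produce} the bound $|u_i|<\Psi(i^{-1})$, since that theorem already requires $|u|,|f|<\kappa$ as a hypothesis and its conclusion still carries the term $\gamma\sup|u|$, so it transfers smallness rather than creating it. The paper avoids this by choosing the potentials explicitly: $\omega_{C(Y)}$ has potential $\tfrac12 d_{C(Y)}(o,\cdot)^2$ and, by \cite[Proposition 3.1]{LSzI}, $\omega_i$ has a potential $\phi_i$ close to $\tfrac12 d_{\omega_i}(o,\cdot)^2$, so $|u_i|=|\phi_i-\psi|<\Psi(i^{-1})$ follows directly from the distance estimate in (2). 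Similarly, interior elliptic regularity on compact subsets of the regular set does not give the global bounds $|f_i|,|\nabla f_i|_{\omega_i}<\Psi(i^{-1})$ up to the singular set; the paper obtains $|f_i|<\Psi(i^{-1})$ by extending the pluriharmonic function $f_i+c_i\phi_i$ across the singular set (Grauert--Remmert), combining Colding's volume convergence with the mean value inequality for $e^{\pm(f_i+c_i\phi_i)}$, and then gets the gradient bound from $\Delta_{\omega_i}f_i=-nc_i$ and the gradient estimate for (almost) harmonic functions. You would need to supply these steps, or equivalents, for your proof to go through.
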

\begin{proof}
  Let $\mathcal{O}_p$ be the ring of germs of holomorphic
  functions on $Z$ at $p$. 
  As in Donaldson-Sun~\cite{DS17}, for $f \in \mathcal{O}_p$ we can define
  \[
    d_{KE}(f) = \lim_{r\to 0} \frac{\sup_{B(p,r)}\log |f|}{\log r}.
  \]
  By Li-Xu~\cite[Theorem~1.4]{LX}, $d_{KE}$ is the unique $K$-semistable
  valuation in $\operatorname{Val}_{Z,p}$. On the other hand, $C(Y)$
  admits a Ricci-flat K\"ahler cone metric, and so the homothetic
  scaling on $C(Y)$ gives rise to a $K$-polystable valuation by
  Li-Wang-Xu~\cite[Corollary A.4]{LWX}, which in
  particular is $K$-semistable. It follows that these two valuations coincide.

  The coordinate ring $R(C(Y))$ is a sum of the homogeneous pieces
  \[ R(C(Y)) = \bigoplus_{k\geq 0} R_{d_k}(C(Y)), \] where $R_{d_k}$
  is the degree $d_k$ piece under the homothetic action.  Let us
  suppose that $R(C(Y))$ is generated by the functions of degree less
  than $D$, and let $k_0 = \max\{ k \geq 0\,|\, d_k < D\}$. We have a
  subspace $P \subset \mathcal{O}_p$, and an adapted sequence of bases
  for $P$ as in \cite[Section~3.2]{DS17}, which for sufficiently large
  $i$ define holomorphic embeddings $F_i : B_i \to
  \mathbf{C}^N$. Under the Gromov-Hausdorff convergence
  $B_i \to B \subset C(Y)$, the maps $F_i$ converge to an embedding
  $B \to \mathbf{C}^N$ using an $L^2$-orthonormal set of homogeneous
  functions in $R(C(Y))$ and up to modifying our maps by unitary
  transformations we can assume that this embedding of $B$ coincides
  with our embedding $F_\infty$. We will denote the $L^2$-norm of
  functions on $B_i$ by $\Vert \cdot \Vert_i$. 

  Recall that the adapted sequence of bases are bases $\{G^1_i,\ldots,
  G^m_i\}$ of $P$ satisfying the following:
  \begin{itemize}
  \item The $L^2$ norm on $B_i$ satisfies $\Vert G^a_i\Vert_i=1$, and
    $\langle G_i^a, G_i^b\rangle_i \to 0$ as $i\to\infty$.
  \item We have $G^a_i = \mu_{ia}^{-1} G^a_{i-1} + p_i^a$, with $\Vert
    p_i^a\Vert_i\to 0$ as $i\to\infty$. 
  \item $\mu_{ia} \to \lambda^{d_a}$ as $i\to\infty$.  
  \end{itemize}

  For each $a,i$ we can write
  \[ G^a_i = g^a_i + k^a_i, \] where $g^a_i$ is homogeneous of degree
  $d_a$ and $k^a_i$ has strictly greater degree. There exists an
  $\epsilon > 0$ such that for all $a, i$ we have
  $d(k^a_i) > d_a + \epsilon$. Let us also decompose
  $p_i^a = (p_i^a)_{d_a} + (p_i^a)_{> d_a}$ into the homogeneous degree
  $d_a$ piece, and the remainder. We then have
  \[ G^a_i = \mu_{ia}^{-1}(g^a_{i-1} + k^a_{i-1}) + p_i^a, \]
  and so
  \[ g^a_i &= \mu_{ia}^{-1}g^a_{i-1} + (p_i^a)_{d_a}, \\
    k^a_i &= \mu_{ia}^{-1} k^a_{i-1} + (p_i^a)_{>d_a}. \]
  Since $d(k^a_{i-1}) > d_a + \epsilon$ and $\mu_{ia}\to \lambda^{d_a}$,
  for sufficiently large $i$ we have
  \[ \Vert \mu_{ia}^{-1} k^a_{i-1}\Vert_i \leq \mu_{ia}^{-1}
    \lambda^{d_a+\epsilon/2} \Vert k^a_{i-1}\Vert_{i-1} \leq \lambda^{
      \epsilon/4}\Vert k^a_{i-1}\Vert_{i-1}. \]
  It follows that $\Vert k^a_i\Vert \to 0$ as $i\to\infty$, and so if
  we define the
  functions $\tilde{F}_i$ to have components $g_i^a$, then
  $\sup_{B_i} |F_i - \tilde{F}_i| \to 0$. Therefore the $\tilde{F}_i$ also
  give embeddings of $B_i$ converging to the embedding $F_\infty$ of
  $B$. 

  We claim that further
  modifying the $\tilde{F}_i$ by elements in $GL(N)$ converging to the
  identity, and commuting with the homothetic action on $C(Y)$, we can assume that
  $\tilde{F}_i(B_i) \subset C(Y)\subset\mathbf{C}^N$. To see this, recall
  that the homothetic action on $C(Y)$ generates the algebraic action of
  a complex torus $T$ on $C(Y)$, which we can assume is given by a
  linear action on $\mathbf{C}^N$. By our construction each $F_i(B_i)$
  lies in the image $g_i C(Y)$ of the cone by a matrix
  $g_i \in GL(N)^T$ commuting with $T$. We need to show that there are
  elements $h_i\in GL(N)^T$ converging to the identity such that $h_i
  g_i C(Y) = C(Y)$. Since $C(Y)$ admits a Ricci flat
  K\"ahler cone metric, the group of linear automorphisms of $C(Y)$ commuting
  with $T$ is reductive (see Donaldson-Sun~\cite{DS17}). Using this, we
  can apply the variant of Luna's slice theorem shown in
  Donaldson~\cite[Proof~of~Proposition~1]{Don} to the multigraded Hilbert
  scheme. In this Hilbert scheme we have $g_i C(Y) \to C(Y)$, and
  therefore there are some $h_i\to 1$ in $GL(N)^T$ such that $h_i g_i
  C(Y)$ lie in the slice at $C(Y)$. The orbit of $C(Y)$ can only meet
  the slice at finitely many points near $C(Y)$, therefore for
  sufficiently large $i$ we have $h_i g_i C(Y) = C(Y)$. Replacing the
  $F_i$ by $h_i\circ \tilde{F}_i$ we now have embeddings $F_i$ of the
  $B_i$, satisfying Condition (1) in the statement of the Proposition. 

  Regarding Condition (2), the estimate for the Ricci curvature is
  immediate since by construction
  $\mathrm{Ric}(\omega_i)=c \lambda^{2i}\omega_i$ for some $|c| \leq
  1$.
  The estimate $|d_{\omega_i} - d_{\omega_{C(Y)}}| <
  \Psi(i^{-1})$ follows from the estimate $|F_i - F_\infty| <
  \Psi(i^{-1})$. More precisely, for any $\epsilon > 0$ we need to show
  that for sufficiently large $i$ we have $|d_{\omega_i} -
  d_{\omega_{C(Y)}}| < \epsilon$. Let $x,y \in F_i(B_i)$, so that
  $x=F_i(x_i), y=F_i(y_i)$. We can find $x_i', y_i' \in B$ such that under
  the Gromov-Hausdorff approximations we have $d(x_i,x_i') , d(y_i,y_i') <
  \Psi(i^{-1})$, and then $|x - F_\infty(x'_i)|, |y - F_\infty(y'_i)| <
  \Psi(i^{-1})$ by Condition (1). At the same time we also have points
  $x', y' \in B$ such that $x = F_\infty(x'), y =
  F_\infty(y')$. Our goal is to show that $|d_B(x', y') -
  d_{B_i}(x_i, y_i)| < \epsilon$ if $i$ is sufficiently large
  (independent of $x,y$), where we are emphasizing that we are taking
  the distance with respect to the $\omega_{C(Y)}$ and $\omega_i$
  metrics by writing $d_B, d_{B_i}$.  Using the metrics on $B_i \sqcup
  B$ realizing the Gromov-Hausdorff convergence, we have
  \[ |d_B(x', y') -
    d_{B_i}(x_i, y_i)| &\leq d_B(x_i', x') + d_B(y_i', y') + d(x_i, x_i')
    + d(y_i, y_i') \\
    &\leq d_B(x_i', x') + d_B(y_i', y')  +\Psi(i^{-1}). 
  \]
  Finally to see that $d_B(x_i',x')$ is small for large $i$, we
  can use that $|F_\infty(x_i') - F_\infty(x')| < \Psi(i^{-1})$ and the fact
  that $F_\infty^{-1}$ is uniformly continuous. It follows that for
  sufficiently large $i$ we have $d_B(x_i', x') < \epsilon/2$, and the
  same holds for $d_B(y_i', y')$. Combining
  these results, we get $ |d_B(x', y') -
  d_{B_i}(x_i, y_i)| < \epsilon$ for large $i$ as required. 

  Since $\omega_{C(Y)}$ is a cone metric, it admits the K\"ahler
  potential $\psi = \frac{1}{2} d_{C(Y)}(o, \cdot)^2$. At the same
  time, using \cite[Proposition 3.1]{LSzI}, we can find K\"ahler
  potentials $\phi_i$ for $\omega_i$ on $F_i(B_i)$, such that
  \[ |\phi_i - \frac{1}{2} d_{\omega_i}(o, \cdot)^2| < \Psi(i^{-1}). \]
  Using the estimate for the distance functions in Condition (2) we find
  that $\omega_i = \omega_{C(Y)} + \ddb u_i$, where
  \[ |u_i| = |\phi_i - \psi| = \frac{1}{2}| d_{\omega_i}(o, \cdot)^2 -
    d_{\omega_{C(Y)}}(o, \cdot)^2|< \Psi(i^{-1}). \]
  Since $\omega_i$ satisfies $\mathrm{Ric}(\omega_i) = c_i\omega_i$ and
  $\mathrm{Ric}(\omega_{C(Y)}) = 0$, we
  have $\omega_i^n = e^{f_i} \omega_{C(Y)}^n$ for some $f_i$ satisfying
  $c_i\omega_i = -\ddb f_i$ on the regular part of $B$, i.e. 
  \[ \label{eq:ddbf10} \ddb (f_i + c_i \phi_i) =0. \] By
  Grauert-Remmert~\cite{GR56} the pluriharmonic function
  $f_i + c_i\phi_i$ extends to a pluriharmonic function across the
  (codimension 2) singular set of $B$. By Colding's volume convergence
  theorem~\cite{Col} we have
  \[ \int_B \omega_i^n = \int_B \omega_{C(Y)}^n + \Psi(i^{-1}), \] and
  so since $c_i\to 0$ as $i\to\infty$ while $\phi_i$ is uniformly
  bounded, we have
  \[ \label{eq:int20} \int_B e^{f_i + c_i\phi_i} \omega_{C(Y)}^n =
    \int_B \omega_{C(Y)}^n + \Psi(i^{-1}). \] In particular we have a
  uniform bound for the $L^1$ norm of the plurisubharmonic function
  $e^{f_i+c_i\phi_i}$ on $B$, with respect to $\omega_{C(Y)}^n$, and
  so by the mean value inequality on a slightly smaller ball we have a
  uniform upper bound $e^{f_i+c_i\phi_i} < 1+\Psi(i^{-1})$, or in
  other words $f_i+c_i\phi_i < \Psi(i^{-1})$. Similarly, we have a
  uniform upper bound for the integral of $e^{-f_i-c_i\phi_i}$ with
  respect to $\omega_i^n$, and so we have
  $-f_i-c_i\phi_i < \Psi(i^{-1})$ on a slightly smaller ball. This
  implies that $|f_i + c_i\phi_i| < \Psi(i^{-1})$, and since
  $c_i\to 0$, we obtain $|f_i| < \Psi(i^{-1})$. We can arrange that
  $f_i(0)=0$ by composing the $F_i$ by an element of $GL(N)$ close to
  the identity, inducing a homothetic scaling on the cone $C(Y)$.
  Finally, using the equation $nc_i = -\Delta_{\omega_i} f_i$ together
  with the gradient estimate for harmonic functions implies
  $|\nabla f_i|_{\omega_i} < \Psi(i^{-1})$ as required in
  condition~(3).
\end{proof}

We are now ready to prove Theorem~\ref{thm:main}. 

\begin{proof}[Proof of Theorem~\ref{thm:main}]
  By Proposition~\ref{prop:spec1}, $\omega_Z$ can be
  approximated by $\mathcal{F}$. We can set $\Lambda = 1$ in
  Definition~\ref{defn:modelapprox} because the model metric
  $\omega_0$ is a cone metric. Applying
  Proposition~\ref{prop:convergence}, we see that there exist a model
  metric $\omega_h \in \mathcal{F}$, $r_0> 0$, and a holomorphic map
  $F: B_{\omega_h}(0,r_0) \to Z$ with $F(0) = p$ and constants
  $C,\alpha > 0$, such that
  \[\label{eq:poly1}
    F^*\omega_Z = \omega_h + \ddb u_r
  \]
  for some $u_r$ defined on $B(0, r)$ and
  \[
    \sup_{B_{\omega_h}(0,r)} |u_r| \le Cr^{2+\alpha}
  \]
  for all $r < r_0$. By construction (see
  Lemma~\ref{lemma:conemodel}), $\omega_h = g^*\omega_{C(Y)}$. Thus
  \eqref{eq:poly1} becomes
  \[
    (F\circ g^{-1})^*\omega_Z = \omega_{C(Y)} + \ddb (u_r\circ g^{-1}).
  \]
  Since $B(0,r/2) \subset B_{\omega_h}(0,r)$, we have
  \[\label{eq:ur}
    \sup_{B(0,r/2)} |u_r \circ g^{-1}| \le Cr^{2+\alpha}.
  \]
  This completes the proof.
\end{proof}

In the case of tangent cones with isolated singularities we have the
following corollary, generalizing Hein-Sun~\cite[Theorem 1.4]{HS}. 
\begin{cor}\label{cor:isolated}
  Suppose that in the setting of Theorem~\ref{thm:main} the
  tangent cone $C(Y)$ has an isolated singularity at the origin. Then
  the metric $\phi^*\omega_Z$ satisfies
  \[ \sup_{B(o, r)\setminus B(o, r/2)} |\nabla^j_{\omega_{C(Y)}} (\phi^*\omega_Z-
    \omega_{C(Y)})|_{\omega_{C(Y)}} \leq C_j r^{\alpha-j}, \]
  for all $r < r_0$, constants $C_j$, and the $\alpha$ from
  Theorem~\ref{thm:main}. 
\end{cor}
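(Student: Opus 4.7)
The plan is to reduce the higher regularity bound on annuli to the $L^\infty$ bound on the K\"ahler potential given by Theorem~\ref{thm:main}, via a rescaling argument exploiting the smoothness of the cross-section of $C(Y)$.

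First, for $r < r_0$, consider the annulus $A_r = B(o,r)\setminus B(o,r/2)\subset C(Y)$. Because $C(Y)$ has an isolated singularity at $o$, the unit annulus $A_1 = B(o,1)\setminus B(o,1/2)$ lies in the smooth locus with compact closure there, so $\omega_{C(Y)}$ has bounded geometry to all orders on $A_1$. The homothetic scaling $\sigma_r(x) = rx$ satisfies $\sigma_r^*\omega_{C(Y)} = r^2\omega_{C(Y)}$ and maps $A_1$ onto $A_r$. Define $\tilde u(x) = r^{-2} u_r(rx)$ on a slightly enlarged annulus $\widetilde A_1 = B(o,1+\delta)\setminus B(o,1/2-\delta)$; by Theorem~\ref{thm:main} we have $|\tilde u| \le Cr^\alpha$ there.

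Next, the rescaled metric $\tilde\omega = r^{-2}\sigma_r^*(\phi^*\omega_Z) = \omega_{C(Y)} + \ddb\tilde u$ is K\"ahler--Einstein with Ricci constant $\lambda r^2$, where $|\lambda|\le 1$. Writing $\tilde\omega^n = e^{\tilde f}\omega_{C(Y)}^n$, the Ricci-flatness of $\omega_{C(Y)}$ together with $\mathrm{Ric}(\tilde\omega) = \lambda r^2\tilde\omega$ gives $\ddb\tilde f = -\lambda r^2\tilde\omega$ on $\widetilde A_1$. Choosing a local K\"ahler potential $\psi$ for $\tilde\omega$, the function $\tilde f + \lambda r^2\psi$ is pluriharmonic, and a mean-value argument exactly as in the proof of Proposition~\ref{prop:spec1} (applied on balls compactly contained in the smooth annulus, using the uniform geometry of $\omega_{C(Y)}$) yields $|\tilde f|\le C'r^\alpha$ on $A_1$ after normalizing the additive constant.

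With $|\tilde u|, |\tilde f|, |\lambda r^2|\le C'' r^\alpha$ and bounded background geometry, Lemma~\ref{lemma:savin} applied in a finite cover of $A_1$ by coordinate balls of uniformly bounded geometry yields
\[
\|\tilde u\|_{C^{k,\alpha}(A_1)} \le C_k\, r^\alpha
\]
for every $k$. Under the scaling $\tilde\nabla^{j+2}\tilde u(x) = r^j\,\nabla^{j+2}u_r(rx)$ (indices raised with $\omega_{C(Y)}$, which scales homogeneously under $\sigma_r$), this translates to
\[
\sup_{A_r}|\nabla^{j+2}_{\omega_{C(Y)}} u_r|_{\omega_{C(Y)}} \le C_j'\, r^{\alpha-j}.
\]
Since $\phi^*\omega_Z - \omega_{C(Y)} = \ddb u_r$ on the smooth locus, this gives the claimed bound on $\nabla^j_{\omega_{C(Y)}}(\phi^*\omega_Z - \omega_{C(Y)})$.

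The main technical point is producing the $L^\infty$ bound for $\tilde f$ from the $L^\infty$ bound on $\tilde u$, which is not an immediate consequence of the Monge--Amp\`ere equation alone; the idea is to use that $\tilde f$ is close to pluriharmonic (thanks to Ricci-flatness of $\omega_{C(Y)}$ together with the rescaled Einstein constant $\lambda r^2$), so the argument already deployed in Proposition~\ref{prop:spec1} applies in this simpler smooth setting. Once this is in hand, Lemma~\ref{lemma:savin} and rescaling do the rest routinely.
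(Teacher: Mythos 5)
Your overall strategy is the paper's: the published proof is exactly ``rescale the potential estimate by $r^{-1}$ and apply Lemma~\ref{lemma:savin}'', and your rescaling of the annulus to unit size, the use of the bounded geometry of $A_1$ (isolated singularity), and the bookkeeping of how $\nabla^{j+2}u_r$ scales are all correct and match that argument. The problem is the step you yourself single out as the main technical point, the bound $|\tilde f|\le C'r^{\alpha}$. First, you cannot ``normalize the additive constant'' of $\tilde f$: it is pinned down by the equation $\tilde\omega^n=e^{\tilde f}\omega_{C(Y)}^n$, so there is no free constant to adjust. Second, the mean-value argument in Proposition~\ref{prop:spec1} rests on Colding's volume convergence theorem and therefore only produces a qualitative bound $\Psi(i^{-1})$ along the blow-up sequence; it does not, as written, give any polynomial rate in $r$, which is what you need in order to feed $\epsilon\sim r^{\alpha}$ into Lemma~\ref{lemma:savin}. (Smallness of $|f|$ really is a hypothesis of Lemma~\ref{lemma:savin}: its proof bounds $v=cu+f$ by Schauder theory starting from the $L^\infty$ bound on $f$, so you cannot drop it.)

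The correct source of the $f$-bound is the construction behind Theorem~\ref{thm:main} rather than a fresh mean-value argument. In Definition~\ref{defn:modelapprox} (verified in Proposition~\ref{prop:spec1}~(3)) the approximation is arranged so that $f(p)=0$ and $|\nabla f|_{\eta}<r_0^{-1}\kappa\epsilon$ at the fixed initial scale $r_0$, and by property~(2) of Definition~\ref{defn:model} all model metrics have the same volume form, so the same function $f$ persists unchanged through the iteration in Proposition~\ref{prop:convergence}. Integrating the gradient bound from $p$ then gives $|f|\le C r$ on $B(o,Cr)$, hence $|\tilde f|\le Cr\le Cr^{\alpha}$ on the rescaled annulus (shrinking $\alpha$ to be at most $1$ if necessary, which is harmless). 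Alternatively, one can make your mean-value route quantitative by replacing Colding's theorem with an integration-by-parts (Chern--Levine--Nirenberg type) estimate of $\int(\tilde\omega^n-\omega_{C(Y)}^n)$ in terms of $\sup|\tilde u|$, but the normalization $f(p)=0$ must still come from the construction, not from a choice you make at this stage. With the $f$-bound supplied in either of these ways, the rest of your argument is complete and coincides with the paper's proof.
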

\begin{proof}
  This follows from rescaling the estimate \eqref{eq:ur} by a factor of
  $r^{-1}$, and then applying Lemma~\ref{lemma:savin}.
\end{proof}

\section{The unstable case}
\label{sec:unstable}

In this section we prove Theorem~\ref{thm:main2}. Suppose that $Z$ is
the Gromov-Hausdorff limit of a non-collapsing sequence of polarized
K\"ahler-Einstein manifold. Let $p \in Z$, and suppose $C(Y)$ is the
tangent cone at $p$. Unlike the previous section, we deal with an example
for which the germ $(Z,p)$ is not isomorphic to the germ $(C(Y), o)$,
where $o$ is the vertex of the cone. Assume that
\[
  C(Y) = \mathbf{C} \times \{ f(x) = x_1^2 + x_2^2 + \dots + x_n^2 = 0 \} \subset \mathbf{C}^{n+1}.
\]
This is equipped with the Calabi-Yau cone metric
\[
  \omega_{C(Y)} = \frac{1}{2}\ddb (|z|^2 + r^2),
\]
where $r^2 = |x|^{2\frac{n-2}{n-1}}$ is the distance squared of the
Stenzel metric~\cite{Stenzel}. Recall that the homothetic action on the coordinates
$x_i$ has weights $w_i = \frac{n-1}{n-2}$, and $f$ is homogeneous with
degree $d = 2\frac{n-1}{n-2}$. We assume that the germ $(Z,p)$ is
isomorphic to the isolated singularity
\[
  X = \{ z^p + x_1^2 + \dots + x_n^2 = 0\} \subset \mathbf{C}^{n+1}
\]
for a fixed integer $p > d$. The effect is that the $\mathbf{C}^*$
action extending the homothetic action on $C(Y)$ degenerates $X$ to
$C(Y)$. By \cite[Theorem~2]{Sz19}, there exists a Calabi-Yau metric
$\omega$ on a neighborhood of the singular point $0$, whose tangent
cone at $0$ is $C(Y)$.

As in the previous section, we will prove Theorem~\ref{thm:main2} by
showing that there exists a family $\mathcal{F}$ of model metrics
built from applying automorphisms and scalings to $\omega$, and that
the singular K\"ahler-Einstein metric $\omega_Z$ on $Z$ can be
approximated by $\mathcal{F}$ near $p$. We have the following lemma,
characterizing the space $H$ of quadratic harmonic functions whose
gradients generate automorphisms of $C(Y)$ that commute with scaling.

\begin{lemma}
  \label{lemma:H}
  Let $H$ be the space of quadratic harmonic functions, whose
  gradients generate automorphisms of $C(Y)$ that commute with
  scaling. Then $H$ is spanned by
  \[
    (n-1)|z|^2 - |x|^{2\frac{n-2}{n-1}}
  \]
  and
  \[
    |x|^{-\frac{2}{n-1}}a_{jk}x_j\bar{x}_k,
  \]
  where $(a_{jk}) \in \sqrt{-1} \mathfrak{o}(n,\mathbf{R})$. For
  $h \in H$ there exist a holomorphic vector field $V$ on
  $\mathbf{C}^{n+1}$ preserving the hypersurfaces
  $X_c = \{ cz^p + x_1^2 + \cdots x_n^2 = 0 \} \subset
  \mathbf{C}^{n+1}$, and a constant $\beta$ such that
  $L_V\Omega = n\beta\Omega$, where
  $\Omega = (1/x_1)dz \wedge dx_2 \cdots \wedge dx_n$ is the
  holomorphic volume form on $X_c$, and
  \[\label{eq:H}
    V\left(\frac{|z|^2 + r^2}{2}\right) - \beta \left(\frac{|z|^2 + r^2}{2}\right) = h.
  \]
  In addition we have $|\beta| \le C\|h\|$ and
  \[\sup_{B_{\omega_c}(0,r)} |V|_{\omega_c} \le C\|h\|r,\] i.e. $V$ has at most linear growth. Here
  $B(o,1) \subset C(Y)$ is the unit ball and
  $\omega_c = |s|^{-2}F_c^*\omega$ is the rescaled metric on $X_c$
  with $F_c: X_c \to X$ given by
  $F_c(z,x) = (sz, s^{\frac{n-1}{n-2}}x)$, and
  $s^{p-2\frac{n-1}{n-2}} = c$.
\end{lemma}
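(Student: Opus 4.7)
My plan proceeds in two parts: first identify $H$, then construct the vector field $V$ and constant $\beta$ associated to each $h\in H$.

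For the identification of $H$, I would apply Lemma~\ref{lemma:HS}(3), which realizes the algebra of real holomorphic vector fields commuting with $r\partial_r$ as $\mathfrak{p}\oplus J\mathfrak{p}$, with $\mathfrak{p}=\mathbf{R}\langle r\partial_r\rangle\oplus\{\nabla h:h\in H\}$. On $C(Y)=\mathbf{C}\times A_1$ the centralizer of the homothetic scaling inside $\mathrm{Aut}(C(Y))$ is $\mathbf{C}^*\times\mathbf{C}^*\times O(n,\mathbf{C})$ (the two $\mathbf{C}^*$-factors scale $z$ and $x$ independently, and $O(n,\mathbf{C})$ acts on $x$ preserving $\sum x_i^2$), of real dimension $4+n(n-1)$. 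Subtracting the two-dimensional span $\mathbf{R}\langle r\partial_r,Jr\partial_r\rangle$ and dividing by two yields $\dim_{\mathbf{R}}H=1+n(n-1)/2$. It then suffices to verify that the two explicit families belong to $H$ and span this dimension. Harmonicity of $(n-1)|z|^2-r^2$ is immediate from $\Delta_{\mathbf{C}}|z|^2=1$ and $\Delta_{A_1}r^2=n-1$, using that $r^2$ is a K\"ahler potential on the $(n-1)$-dimensional Stenzel cone $A_1$. For the second family, the vector field $V_a:=a_{jk}x_k\partial_{x_j}$ is tangent to $\{\sum x_i^2=0\}$ since $a$ is complex antisymmetric, and it acts by isometries of the Stenzel metric because $SO(n,\mathbf{R})$ acts isometrically on $A_1$; its Killing potential is, up to a universal constant, $|x|^{-2/(n-1)}a_{jk}x_j\bar x_k$, and any $\xi$-invariant degree-two Killing potential on a Ricci-flat K\"ahler cone is automatically harmonic. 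The dimension of this family is $\dim_{\mathbf{R}}\mathfrak{o}(n,\mathbf{R})=n(n-1)/2$, so the total matches.

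For the construction of $V$, by linearity it suffices to treat the two basis types separately. For $h_0=(n-1)|z|^2-r^2$, I would take $V=\alpha z\partial_z+\gamma\sum_jx_j\partial_{x_j}$, a linear holomorphic vector field on $\mathbf{C}^{n+1}$. Tangency to every $X_c$ amounts to $V(cz^p+\sum x_i^2)=\alpha p\cdot(cz^p+\sum x_i^2)$, which forces $\alpha p=2\gamma$. A direct computation gives $V\phi=\tfrac{1}{2}\alpha|z|^2+\tfrac{(n-2)\gamma}{2(n-1)}r^2$, so the requirement $V\phi-\beta\phi=h_0$ yields $\alpha-\beta=2(n-1)$ and $\tfrac{(n-2)\gamma}{n-1}-\beta=-2$. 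Together with $\alpha p=2\gamma$, this is a uniquely solvable linear system, well-defined because $p>d=2(n-1)/(n-2)$ ensures $p(n-2)\ne 2(n-1)$. Using the residue formula one computes $L_V\Omega=(\alpha+(n-2)\gamma)\Omega$, and the equations above force $\alpha+(n-2)\gamma=n\beta$, so $L_V\Omega=n\beta\Omega$ holds automatically. For $h_a=|x|^{-2/(n-1)}a_{jk}x_j\bar x_k$ with $a\in i\mathfrak{o}(n,\mathbf{R})$, I would take $V=c_a a_{jk}x_k\partial_{x_j}$ for a suitable real constant $c_a$; antisymmetry of $a$ makes $V$ annihilate both $\sum x_i^2$ and $z^p$, and $V$ has vanishing divergence on $\mathbf{C}^{n+1}$, so $L_V\Omega=0$ and $\beta=0$. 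A direct evaluation of $V\phi$ then pins down $c_a$ so that $V\phi=h_a$. A general $h\in H$ is handled by linear combination.

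For the bounds, $\beta$ is a real linear functional of $h$, vanishing on the subspace spanned by the $h_a$ and equal to a fixed multiple of the coefficient of $h_0$, so $|\beta|\le C\|h\|$ is immediate. In both basis cases $V$ is linear in $(z,x)$ with coefficients bounded by $C\|h\|$. On the cone $C(Y)$ the vector field $V$ commutes with scaling, giving $|V|_{\omega_{C(Y)}}\le C\|h\|r$ on $B(o,r)$. For the transfer to $(X_c,\omega_c)$, the dilation $F_c(z,x)=(sz,s^{(n-1)/(n-2)}x)$ conjugates $V$ into a vector field of the same linear form on $X$, and $\omega_c=|s|^{-2}F_c^*\omega$ combined with the asymptotic comparison of $\omega$ to the cone metric near $0\in X$ (which follows from the construction in \cite{Sz19}, since $C(Y)$ is the tangent cone at $0$) yields $|V|_{\omega_c}(q)=|s|^{-1}|V|_{\omega}(F_c(q))\le C\|h\|d_{\omega_c}(o,q)$ on $B_{\omega_c}(o,r)$.

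The main obstacle I anticipate is the identification of $H$: one needs both the harmonicity of $|x|^{-2/(n-1)}a_{jk}x_j\bar x_k$ on the singular Stenzel cone (which ultimately comes from the Ricci-flat Killing-potential principle but requires care at the singular locus) and the dimension-count argument matching these candidates with the output of Lemma~\ref{lemma:HS}(3). A secondary technical point is the norm comparison between $\omega_c$ and the cone metric on balls of varying radii, required to convert the growth bound for $V$ on $C(Y)$ into one on $(X_c,\omega_c)$, which will need effective asymptotic input from \cite{Sz19}.
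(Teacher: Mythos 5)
Your proposal is correct and follows essentially the same route as the paper: the identification of $H$ via Lemma~\ref{lemma:HS}(3) (the paper cites \cite[Lemma~2.2]{Sz20} or \cite[3.4.1]{ChiuThesis} for exactly this), and for the extra basis element the same diagonal field $\alpha z\partial_z+\gamma\sum x_i\partial_{x_i}$ with tangency forcing $\alpha p=2\gamma$, which after solving your linear system (solvable precisely because $p(n-2)>2(n-1)$) recovers the paper's explicit choice $\operatorname{Re}(\tfrac1p z\partial_z+\tfrac12 x_i\partial_{x_i})$ and its constants, including the forced identity $\alpha+(n-2)\gamma=n\beta$ giving $L_V\Omega=n\beta\Omega$. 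The only caveat is that the growth bound $|V|_{\omega_c}\le C\|h\|r$ does not follow from tangent-cone convergence alone but from the quantitative asymptotics of $\omega$ in \cite{Sz19}, exactly as in \cite[Proposition~2.1(2)]{Sz20}, which is also how the paper handles it and which you correctly flag as the needed input.
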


\begin{proof}
  The first part follows from \cite[Lemma~2.2]{Sz20} using Fourier
  transform in the $\mathbf{C}$ direction or
  \cite[Subsection~3.4.1]{ChiuThesis} using
  Lemma~\ref{lemma:HS}~(3). For the holomorphic vector fields, it is
  very similar to the proof of \cite[Lemma~2.3]{Sz20}. The only
  difference is that when
  $h = |z|^2 - \frac{1}{n-1}|x|^{2\frac{n-2}{n-1}}$, we consider
  the real holomorphic vector field
  \[
    V = \operatorname{Re}\left(\frac{1}{p}z\partial_z + \frac{1}{2}x_i\partial_{x_i}\right).
  \]
  Then $V$ preserves the hypersurfaces
  $cz^p + x_1^2 + \cdots x_n^2 = 0$, we have
  \[
    V(|z|^2 + r^2) - \left(\frac{2+np-2p}{2np}\right)(|z|^2 + r^2)
    =\left(\frac{2n-2-np+2p}{2np}\right)h_{aut},
  \]
  and we have
  \[
    L_V\Omega = \left(\frac{2+np-2p}{2p}\right)\Omega.
  \]
  The estimate for $|V|_{\omega_c}$ is analogous to
  \cite[Proposition~2.1~(2)]{Sz20}, using the construction of
  $\omega$. 
\end{proof}

We now construct the family of model metrics. Let $h \in H$. Then by
Lemma~\ref{lemma:H} there exists a vector field $V$ on
$\mathbf{C}^{n+1}$ and a constant $\beta > 0$ satisfying the required
properties. Let $\phi(t)$ be the one-parameter group of
biholomorphisms of $X$ generated by $V$. Set $g_h = \phi(1)$ and define
$\omega_h = e^{-\beta}g_h^*\omega$.

\begin{lemma}
  \label{lemma:unstablemodel}
  There exists a neighborhood $0 \in U \subset H$ such that
  $\mathcal{F} = \{ \omega_h \mid h \in U \}$ is a family of model
  metrics in the sense of Definition~\ref{defn:model}. 
\end{lemma}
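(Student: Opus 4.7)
The plan is to verify each of the five properties in Definition~\ref{defn:model} for the family $\mathcal{F} = \{\omega_h : h \in U\}$, using as key inputs: the construction of the Calabi-Yau metric $\omega$ on a neighborhood of $0\in X$ from \cite{Sz19}, whose K\"ahler potential $\phi$ satisfies $\phi = \psi + o(\rho^2)$ near the vertex with $\psi := \tfrac{1}{2}(|z|^2 + r^2)$ the cone potential and $\rho$ the cone distance; the structural data of $V$ and $\beta$ from Lemma~\ref{lemma:H}, in particular the identity $V(\psi) - \beta \psi = h$ and the linear growth $|V|_{\omega_c} \leq C\|h\|\rho$; and the template of Lemma~\ref{lemma:conemodel} for the cone case.

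For property (1), one rescales $\omega_{h_i}$ by $r_i^{-2}$ and undoes both the constant factor $e^{-\beta_i}$ and the biholomorphism $g_{h_i}$, reducing to a rescaled ball in $(X,\omega)$; by the construction in \cite{Sz19} these rescaled balls converge in the Gromov-Hausdorff sense to $B(0,1)\subset C(Y)$, and the maps $F_c: X_c\hookrightarrow \mathbf{C}^{n+1}$ from Lemma~\ref{lemma:H} limit to the standard embedding $F_\infty$. Composing with $g_{h_i}^{-1}$, which is uniformly $C^1$-controlled for $h_i\in U$, produces the required $F_i$. Property (2) follows from $L_V\Omega = n\beta \Omega$, which gives $g_h^*(\Omega\wedge\bar\Omega) = e^{n(\beta+\bar\beta)}\Omega\wedge\bar\Omega$; combined with $\omega^n = c_n\,\Omega\wedge\bar\Omega$ and the normalization factor $e^{-\beta}$, this yields $\omega_h^n = \omega^n$. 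For property (3), the bound $|V|_{\omega_c}\leq C\|h\|\rho$ integrates along the flow to $d_{\omega}(x, g_h(x)) \leq C\|h\|\,d_\omega(0,x)$, and together with $|1-e^{-\beta/2}|\leq C|\beta|\leq C\|h\|$ the triangle inequality yields $|d_{\omega_h}-d_{\omega_k}|\leq C(\|h\|+\|k\|)r$.

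The harder properties are (4) and (5), concerning the potential difference $u$ with $\omega_k = \omega_h + \ddb u$. I would Taylor-expand $e^{-\beta} g_h^*\phi = \phi + (V\phi - \beta\phi) + O(\|h\|^2\rho^2)$ and use $V\psi - \beta\psi = h$ together with $\phi = \psi + o(\rho^2)$ to obtain $\phi_h = \phi + h + E_h$ with $|E_h|\leq C\|h\|(\|h\| + \|\phi-\psi\|_{L^\infty(B(0,\rho))})\rho^2$. Taking $u = \phi_k-\phi_h = (k-h) + (E_k - E_h)$ and using $|k-h| \leq \|k-h\|\rho^2$ on $B(0,\rho)$ yields property (4) once $U$ is chosen small enough. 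The main obstacle is property (5), which demands the refined bound $|r_i^{-2}u_i - f_i^*(k_i-h_i)|\leq \epsilon\|k_i-h_i\|$ on compact subsets of the regular set of $B(0,1)\subset C(Y)$. To handle this I would rewrite $g_k^*\phi - g_h^*\phi = g_k^*(\phi - g^*\phi)$ with $g = g_h g_k^{-1} = g_{\tilde h}$ and $\tilde h = h-k + O(\|h\|\|h-k\|)$, and then use that on any fixed compact $K$ in the regular set of $B(0,1)$ the rescaled potentials $r_i^{-2}\phi$ of $r_i^{-2}\omega$ converge smoothly to $\psi$, so that the exact identity $V\psi - \beta\psi = h$ holds in the limit and all error terms, including the deviation $\phi - \psi$, become $o(\|k_i-h_i\|)$ after rescaling. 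This is parallel to item (5) in Lemma~\ref{lemma:conemodel}, with the sole additional complication that here $\phi\neq\psi$ on $X$, controlled by the regularity of the CY metric $\omega$ along the convergence.
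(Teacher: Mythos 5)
Your overall strategy---mirroring Lemma~\ref{lemma:conemodel}, with the vector field $V$ and constant $\beta$ from Lemma~\ref{lemma:H} and the quadratic bound on the K\"ahler potential $\varphi$ of $\omega$ from \cite{Sz19}---is the same as the paper's, and your treatment of properties (1), (2), (3) and (5) matches the actual proof in substance. The step that does not work as written is property (4). You expand each potential separately, $\phi_h = \phi + h + E_h$ with $|E_h| \le C\|h\|\bigl(\|h\| + \|\phi - \psi\|_{L^\infty(B(0,\rho))}\bigr)\rho^2$, and then subtract. But the triangle inequality only gives $|E_k - E_h| \le C(\|h\|+\|k\|)(\cdots)\rho^2$, which is not of the required form $C\|h-k\|\rho^2$ when $\|h-k\| \ll \|h\|+\|k\|$ (for instance $k$ a tiny perturbation of a fixed nonzero $h$). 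Property (4) demands a bound \emph{linear in} $\|h-k\|$, and differencing two independently estimated error terms cannot produce that.

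The repair is exactly the maneuver you already deploy for property (5), and it is what the paper does for (4) as well: write the difference as a pullback of a single flow term, $u = e^{-\gamma}g_k^*\varphi - e^{-\beta}g_h^*\varphi = e^{-\gamma}g_k^*\bigl(\varphi - e^{-(\beta-\gamma)}g_{\tilde h}^*\varphi\bigr)$ with $g_{\tilde h} = g_h g_k^{-1}$ and $\tilde h = h-k+O(\|h-k\|\,\|h\|)$, and then estimate by differentiating $\phi(t)^*\varphi$ along the flow of the vector field attached to $\tilde h$, using $|V_{\tilde h}|_{\omega} \le C\|\tilde h\|\rho$ together with $|\nabla\varphi|_{\omega} \le C\rho$ (which follows from $|\varphi|\le C\rho^2$ and the gradient estimate, since $\Delta_\omega\varphi = n$). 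This yields $|u| \le C\|\tilde h\|\rho^2 \le C\|h-k\|\rho^2$ directly. Note also that for (4) you do not need the asymptotic statement $\varphi = \psi + o(\rho^2)$ at all---only the quadratic bound on $\varphi$; the closeness of the rescaled potentials $r_i^{-2}\varphi$ to the cone potential is only needed for (5), on compact subsets of the regular set, where (as you say) smooth convergence makes all error terms of size $\Psi(i^{-1})\|h_i-k_i\|$.
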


\begin{proof}
  This is similar to the proof of Lemma~\ref{lemma:conemodel}. By the
  construction of $\omega$, for $r_i \to 0$,
  $(r_i \cdot): B_\omega(0,1) \to \mathbf{C}^{n+1}$ is a holomorphic
  map which is a $\Psi(i^{-1})$-Gromov-Hausdorff approximation in the
  sense of property~(1) of Definition~\ref{defn:model}, where $\cdot$
  denotes the homothetic scaling. Let $h_i \in H$ be a bounded
  sequence, and consider the corresponding model metrics
  $\omega_{h_i} = e^{-\beta_i}g_i^*\omega$. Since
  $r_i' = r_ie^{\beta_i/2} \to 0$ as $\beta_i$ are bounded
  (Lemma~\ref{lemma:H}), it follows that
  $F_i = (r_i' \cdot) \circ g_i: B_{r_i^{-2}\omega_{h_i}}(0,1) \to
  \mathbf{C}^{n+1}$ is also a $\Psi(i^{-1})$-Gromov-Hausdorff
  approximation. This establishes property~(1) for any bounded
  neighborhood $U$ of $0 \in H$.

  Property~(2) follows from
  Lemma~\ref{lemma:H}. Property (3) is entirely similar to the proof
  of Lemma~\ref{lemma:conemodel}. For the rest, recall that
  $\omega = \ddb \varphi$, where we have
  \[
    \sup_{B_\omega(0,r)} |\varphi| \le Cr^2,
  \]
  which follows from the construction in \cite[Section~8]{Sz19}. Since
  $\Delta_\omega \varphi = n$, we can apply the gradient estimate in
  annuli to get
  \[
    \sup_{B_\omega(0,r)}|\nabla \varphi| \le Cr
  \]
  for all $r>0$. Differentiating $\phi(t)^*\varphi$ and using the
  bounds in Lemma~\ref{lemma:H}, we have
  \[
    |g_h^*\varphi - \varphi| \le C\|h\|r^2
  \]
  for all $r > 0$. It follows that
  $|e^{-\beta}g_h^*\varphi - \varphi| \le C\|h\|r^2$. Now let
  $k \in H$ be another quadratic harmonic function, and let
  $W, \gamma$ be the corresponding vector field and constant given in
  \eqref{eq:H} of Lemma~\ref{lemma:H}. First we note that the vector
  fields given by \eqref{eq:H} form a Lie subalgebra. Thus by standard
  Lie theory, for sufficiently small $h,k$,
  $g_{\tilde{h}} = g_hg_k^{-1}$ for some $\tilde h \in H$, with
  $\tilde h = h - k + O(\Vert h-k\Vert\Vert h\Vert)$. Let $\tilde{V}$ and $\tilde\beta$
  be the vector field and the constant associated to $\tilde h$ in
  \eqref{eq:H}. We then have
  \[
    |e^{-\gamma}g_k^*\varphi - e^{-\beta}g_h|
    &\le e^{-\gamma}g_k^*|\varphi - e^{-(\beta-\gamma)}g_{\tilde h}^*\varphi| \\
    &\le e^{-\gamma}g_k^*(|\varphi - e^{-\tilde \beta}g_{\tilde
      h}^*\varphi| + |e^{-\tilde \beta} -
    e^{-(\beta-\gamma)}||g_{\tilde h}^*\varphi|) \\
    &\le e^{-\gamma}g_k^*(C\|\tilde h\|r^2 + C\|\tilde h\||g_{\tilde h}^*\varphi|) \\
    &\le C\|h-k\|r^2.
  \]
  This proves property (4) for some small neighborhood $U$.

  Finally,
  let us prove (5). Let $r_i \to 0$ and $h_i, k_i \in H$ with
  $\|h_i\|, \|k_i\| \to 0$. Let $V_i, W_i$ be the corresponding vector
  fields for $h_i, k_i$ defined in Lemma~\ref{lemma:H}. Let
  $\phi_i(t),\psi_i(t)$ be the flows of $V_i, W_i$, respectively. Set
  $g_{h_i} = \phi(1)$ and $g_{k_i} = \psi(1)$. Then the model metrics
  are given by $\omega_{h_i} = e^{-\beta_i}g_{h_i}^*\omega$ and
  $\omega_{k_i} = e^{-\gamma_i}g^*_{k_i}\omega$, with
  $|\beta_i| \le C\|h_i\|$ and $|\gamma_i| \le C\|k_i\|$. Fix a
  compact set $K$ in the regular set of $B(0,1)$, and let
  $K_i \subset B_{r_i^{-2}\omega_{h_i}}(0,1)$ be compact sets
  converging to $K$ in the Gromov-Hausdorff sense. By \eqref{eq:H} in
  Lemma~\ref{lemma:H}, we have
  \[
    V_i(r^2/2) - \beta_i(r^2/2) = h_i
  \]
  and the analogous equation for $W_i, \gamma_i, k_i$.  Here we denote the
  cone metric as $\omega_{C(Y)} = \frac{1}{2}\ddb r^2$. Since
  $\varphi_i = r_i^{-2}\varphi$ on $K_i$ converges to $r^2/2$ in
  $C^\infty$ on $K$, it follows that under the Gromov-Hausdorff
  approximation,
  \[
    |V_i\varphi_i - \beta_i\varphi_i - h_i| \le \Psi(i^{-1})\|h_i\|.
  \]
  Using power series expansion as in Lemma~\ref{lemma:conemodel} and
  the above inequality, it follows that
  \[
    |e^{-\beta_i}g_{h_i}^*\varphi_i - \varphi_i - h_i| \le O(\|h_i\|^2)+ \Psi(i^{-1})\|h_i\|
    \le \Psi(i^{-1})\|h_i\|.
  \]
  Now, let $\tilde{h}_i \in H$ with vector field $\tilde{V}_i$ and
  constant $\tilde{\beta}_i$ such that $g_{\tilde h_i} = g_{h_i}g_{k_i}^{-1}$
  and $\tilde h_i = h_i - k_i + O(\Vert h_i-k_i\Vert \Vert k_i\Vert)$. Then we have
  \[
    |e^{-\beta_i}g_{k_i}^*\varphi_i - e^{-\gamma_i}g_{h_i}^*\varphi_i
    - (k_i - h_i)| 
    &\le e^{-\beta_i}g_{k_i}^*|\varphi_i - e^{-(\gamma_i-\beta_i)}g_{\tilde{h}_i}^*\varphi_i + \tilde{h}_i| \\
    &\phantom{aa} + |e^{-\beta_i}g_{k_i}^*\tilde{h}_i+(k_i-h_i)| \\
    &\le \Psi(i^{-1})\|\tilde h_i\| + C\|h_i-k_i\|\|k_i\| \\
    &\le \Psi(i^{-1})\|h_i-k_i\|.
  \]
  Setting
  $u_i = e^{-\beta_i}g_{k_i}^*\varphi_i -
  e^{-\gamma_i}g_{h_i}^*\varphi_i$, this conclude the proof of (5).
\end{proof}

Now we turn to showing that $\omega_Z$ can be approximated by
$\mathcal{F}$. As in the previous section, let $\lambda = 1/\sqrt{2}$,
and let $(Z_i,p_i)$ denote $(Z,p)$ scaled up by a factor of
$\lambda^{-i}$. Let $B_i$ denote the unit ball centered at $p_i$,
i.e. the ball $B(p,\lambda^i)$ scaled up to unit size. Let $F_\infty$
denote the inclusion of $C(Y)$ in $\mathbf{C}^{n+1}$. Note that the
components of $F_\infty$ consist of $L^2$ orthonormal homogeneous
functions $z, x_i$. Let $B \subset C(Y)$ be the unit ball centered at
$0$.

\begin{prop}\label{prop:spec2}
  For sufficiently large $i$ we have holomorphic maps
  $F_i: B_i \to \mathbf{C}^{n+1}$ with the following properties, where
  $\Psi(i^{-1})$ denotes a function converging to zero as
  $i \to \infty$.
  \begin{enumerate}
  \item On the ball $B_i$ the map $F_i$ gives a
    $\Psi(i^{-1})$-Gromov-Hausdorff approximation to the embedding
    $F_\infty: B \to \mathbf{C}^{n+1}$. More precisely, there exist 
    $\Psi(i^{-1})$-Gromov-Hausdorff approximations $g_i: B_i \to B$ such that
    $|F_i - F_\infty \circ g_i| < \Psi(i^{-1})$.
  \item The image $F_i(B_i)$ lies in
    $X_i = \{ a_iz^p + x_1^2 + \dots x_n^2 = 0 \}$ for some $a_i > 0$ with
    $F_i(p_i) = 0$. $X_i$ is equipped with the metric
    $\omega_i = r_i^{-2}G_i^*\omega$, where
    $r_i^{p-2\frac{n-1}{n-2}} = a_i$ and $G_i: X_i \to X$ is given by
    $(z,x) \to (r_iz, r_i^{\frac{n-1}{n-2}}x)$.
  \item Let $\eta_i = (F_i^{-1})^*(\lambda^{-2i}\omega_Z)$ denote the
    metric on the image $F_i(B_i)$ induced by
    $\lambda^{-2i}\omega_Z$. Then we have
    $\mathrm{Ric}(\eta_i) = c_i\eta_i$ for some $|c_i| < \Psi(i^{-1})$,
    and the distance functions $d_{\eta_i}, d_{\omega_i}$ satisfy
    $|d_{\eta_i}-d_{\omega_i}| < \Psi(i^{-1})$.
  \item We have $\eta_i^n = e^{f_i}\omega_i^n$ and
    $\eta_i = \omega_i + \ddb u_i$, with $f_i(0) = 0$ and
    $|\nabla f_i|_{\eta_i}, |u_i| < \Psi(i^{-1})$.
  \end{enumerate}
  In particular $\omega_Z$ can be approximated by $\mathcal{F}$ in the
  sense of Definition~\ref{defn:modelapprox}.
\end{prop}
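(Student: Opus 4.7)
The plan is to follow the template of Proposition~\ref{prop:spec1} as closely as possible, adapting each step to handle the fact that the germ $(Z,p)$ is isomorphic to the (unstable) hypersurface $X$ rather than to its K-semistable degeneration $C(Y)$. As in that proof, we would begin by invoking Donaldson-Sun's adapted sequence of bases for the coordinate ring $R(C(Y))$, which is generated by the homogeneous functions $z, x_1, \ldots, x_n$. This produces holomorphic embeddings $F_i : B_i \to \mathbf{C}^{n+1}$ whose top-weight pieces converge to the $L^2$-orthonormal embedding $F_\infty$ of $C(Y)$ (up to unitary transformations), giving property~(1). One then needs to control the lower-weight remainders exactly as in the proof of Proposition~\ref{prop:spec1}, using that the $\mu_{ia}^{-1}$ factor is dominated by the extra decay $\lambda^{\epsilon/4}$ coming from the higher homogeneity degrees of the remainder terms.

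Next I would prove property~(2), the hypersurface identification, which is the main novelty. Because the germ $(Z,p) \cong (X,0)$ and the coordinate ring $R(C(Y))$ is generated by $z, x_1, \ldots, x_n$ in the degrees dictated by the homothetic weights $(1, \tfrac{n-1}{n-2}, \ldots, \tfrac{n-1}{n-2})$, the image $F_i(B_i)$ lies in some hypersurface $g_i \cdot X \subset \mathbf{C}^{n+1}$ with $g_i \in GL(n+1)^T$ converging to an element sending $X$ to $C(Y)$ (the test-configuration limit). Applying the variant of Luna's slice theorem from Donaldson~\cite{Don} to the multigraded Hilbert scheme, exactly as in the polystable case, produces $h_i \to 1$ in $GL(n+1)^T$ so that $h_i g_i \cdot X$ lies on the slice at $C(Y)$. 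The key structural fact is that the only $T$-equivariant deformations of $C(Y)$ of weight less than that of $\sum x_j^2$ are generated by the monomial $z^p$ (since $p > d = 2\tfrac{n-1}{n-2}$), so after composing with $h_i$ the image must lie in a hypersurface of the form $X_i = \{a_i z^p + x_1^2 + \cdots + x_n^2 = 0\}$, with $a_i \to 0$. The rescaling map $G_i$ with $r_i^{p - 2(n-1)/(n-2)} = a_i$ then identifies $(X_i, \omega_i)$ with a rescaling of $(X, \omega)$, whose tangent cone at $0$ is $C(Y)$ by \cite{Sz19}. The condition $F_i(p_i) = 0$ can be arranged by a further translation composed with an element of $GL(n+1)^T$ converging to the identity.

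Properties~(3) and (4) would then be verified along the lines of the proof of Proposition~\ref{prop:spec1}. The Ricci estimate $|c_i| = O(\lambda^{2i})$ is automatic from the scaling. The distance estimate $|d_{\eta_i} - d_{\omega_i}| < \Psi(i^{-1})$ follows because both metrics converge in Gromov-Hausdorff sense to the same cone metric on $B \subset C(Y)$: the $\omega_i$-convergence from \cite{Sz19} and property~(1), the $\eta_i$-convergence from the definition of the tangent cone. For the potential, I would again use \cite[Proposition~3.1]{LSzI} to construct K\"ahler potentials $\phi_i^\eta$ and $\phi_i^\omega$ for $\eta_i$ and $\omega_i$ that are $\Psi(i^{-1})$-close to $\tfrac{1}{2} d(o,\cdot)^2$ in each metric, so $u_i = \phi_i^\eta - \phi_i^\omega$ satisfies $|u_i| < \Psi(i^{-1})$. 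The bound on $f_i$ follows from $\ddb(f_i + c_i \phi_i^\eta) = 0$ (where Grauert-Remmert extends this pluriharmonic function across the singular set), Colding's volume convergence to deduce that $\int_B e^{f_i + c_i\phi_i^\eta} \omega_i^n$ is close to $\int_B \omega_i^n$, and the mean value inequality applied to the plurisubharmonic functions $e^{\pm(f_i + c_i\phi_i^\eta)}$ on a slightly smaller ball. Finally $f_i(0) = 0$ is arranged via a homothetic scaling close to the identity, and $|\nabla f_i|_{\eta_i} < \Psi(i^{-1})$ follows from $\Delta_{\eta_i} f_i = -nc_i$ and the gradient estimate in annuli.

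The main obstacle is property~(2), the slice-theorem argument producing the specific hypersurfaces $X_i$. In the polystable setting one can contract $g_i C(Y)$ to $C(Y)$ itself via the reductive group action, but in the unstable case $X$ is never isomorphic to $C(Y)$, so one cannot hope to get $F_i(B_i) \subset C(Y)$. What saves us is the uniqueness of the K-semistable degeneration of $(Z,p)$ from Li-Xu~\cite{LX} together with the explicit weight count showing that the degeneration of $X$ to $C(Y)$ is one-dimensional and is parametrized exactly by the coefficient of the single sub-leading monomial $z^p$. This rigidity is what allows one to write the family in the closed form $\{a_i z^p + \sum x_j^2 = 0\}$ and thereby define $\omega_i$ as a genuine rescaling of the fixed metric $\omega$ on $X$ rather than a metric on an abstract K\"ahler-Einstein deformation.
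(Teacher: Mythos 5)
Your overall strategy --- Donaldson--Sun adapted bases as in Proposition~\ref{prop:spec1}, identification of the image hypersurface, then repeating the arguments for the Ricci/distance/potential/volume-form estimates --- is the same as the paper's, and your treatment of properties (1), (3) and (4) matches what the paper does (it simply says the rest follows verbatim the proof of Proposition~\ref{prop:spec1}). The gap is in property (2), which is the real content here. First, you cannot take the adapted bases to produce maps directly into $\mathbf{C}^{n+1}$ with components modeled on $z,x_1,\dots,x_n$: the subspace $P$ consists of all germs of $d_{KE}$-degree below the generating degree, and in the critical case $n=3$ the weight of $x_j$ is $\tfrac{n-1}{n-2}=2$, so $P$ also contains $1$, $z$ and $z^2$, and the embeddings land in $\mathbf{C}^6$. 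To get down to $\mathbf{C}^4$ one must show that the component corresponding to $z^2$ is $\Psi(i^{-1})$-close to $z_i^2$, that the mixing of $z^2$ into the $x$-components (and of $\mathbf{x}$ into the $z^2$-component) can be removed by linear maps close to the identity, and that the remaining $x$-block converges to an orthogonal matrix; the paper does exactly this explicit linear algebra, writing $w_i=W_i^T\mathbf{x}+b_iz^2$, $\mathbf{x}_i=A_i\mathbf{x}+z^2V_i$ and using the defining equation of $X$ to bound $|d_i|^{-p}$, $|d_i^{-2}V_i|$, $|A_i^TA_i-\mathrm{Id}|$, after which the image hypersurface is read off as $\{a_iz^p+\sum x_j^2=0\}$ with $a_i=d_i^{-p}$. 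Your proposal skips this entirely, and it is precisely the reason the paper singles out $n=3$ as the hard case.

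Second, the Luna-slice argument you propose in place of this computation does not work as stated. In the polystable case the argument uses reductivity of the $T$-equivariant automorphism group of $C(Y)$ and the fact that the orbit of $C(Y)$ itself meets the slice in finitely many points, to conclude $h_ig_iC(Y)=C(Y)$; here the relevant orbit is that of $X$, whose closure only contains $C(Y)$ (there is no element of $GL(n+1)$ ``sending $X$ to $C(Y)$''), and what you need is a normal form for the nearby points of that orbit on the slice --- a different statement requiring its own proof. Your justification is moreover garbled: $z^p$ has weighted degree $p>d=2\tfrac{n-1}{n-2}$, i.e.\ \emph{higher} than $\sum x_j^2$, which is exactly why the $\mathbf{C}^*$-action degenerates $X$ to $C(Y)$; so there is no ``deformations of weight less than that of $\sum x_j^2$ are spanned by $z^p$'' to appeal to, and in any case the matrices relating the adapted bases to the homogeneous generators are general degree-preserving linear maps (including $O(n)$ rotations of the $x_j$ and, for $n=3$, the $z^2$-mixing above), not torus elements, so identifying the slice intersection would force you back into the explicit bookkeeping you are trying to avoid. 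The paper deliberately bypasses the Hilbert-scheme machinery at this step; to repair your proposal you would either need to carry out that computation or supply a correct and complete statement and proof of the claimed rigidity of the orbit of $X$ near $C(Y)$.
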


\begin{proof}
  Identifying the germ of $(Z,p)$ with the germ of $(X,0)$, we can
  assume that $B_i \subset X$. Write $R = \mathcal{O}_{X,0}$, and let
  $v$ be the evaluation of $(X,0)$ associated to $\omega$. By the
  construction of $\omega$, the associated graded ring $R_v$ is
  isomorphic to $R(C(Y))$, which is a Ricci-flat K\"ahler cone. So by
  Li-Xu~\cite[Theorem~1.3]{LX} and
  Li-Wang-Xu~\cite[Corollary~A.4]{LWX}, we have $d_{KE} = v$, where
  $d_{KE}$ is the valuation given by $\omega_Z$.

  We will focus on the case when $n = 3$, as when $n > 3$ it is
  simpler. As in Proposition~\ref{prop:spec1}, we have a subspace
  $P \subset \mathcal{O}_{Z,p}$ and an adapted sequence $\{G_i^a\}_i$ of
  bases for $P$, which for sufficiently large $i$ define holomorphic
  embeddings $F_i: B_i \to \mathbf{C}^N$. $F_i$ converges in the
  Gromov-Hausdorff sense to $F_\infty$, which up to a unitary rotation
  is given by $(1,z,z^2,\mathbf{x})$, the components of which form an
  orthonormal basis for the corresponding space in $R(C(Y))$ (we assume
  $n=3$). Here $\mathbf{x} = (x_1, x_2, x_3)$. From this we see that
  $N=6$. Note that since we have the isomorphism of germs,
  $\mathcal{O}_{Z,p}$ is also generated by
  $S = \{1,z, z^2, \mathbf{x}\}$ . We can decompose $G_i^a$ as
  \[
    G_i^a = g_i^a + k_i^a,
  \]
  where $g_i^a$ is a linear combination of elements in $S$ with degree
  equal to $d_a$ and $k_i^a$ has degree $>d_a$. As in the proof of
  Proposition~\ref{prop:spec1} we have
  $\sup_{B_i} |G^a_i-g_i^a| \to 0$ as $i \to \infty$.

  Define
  $\tilde{F}_i = (g_i^a)$. We can write
  $\tilde{F}_i = (c_i, z_i, w_i, \mathbf{x}_i)$, where
  \begin{align*}
    z_i &= d_iz, \\
    w_i &= W_i^T\mathbf{x} + b_iz^2, \\
    \mathbf{x}_i &= A_i\mathbf{x} + z^2V_i,
  \end{align*}
  and $b_i,c_i, d_i$ are scalars, $V_i, W_i$ are vectors and $A_i$ is a
  matrix. From the Gromov-Hausdorff convergence
  $\tilde{F}_i \to F_\infty$ we see that
  $\sup |z_i^2-w_i| \le \Psi(i^{-1})$, and so
  $|W_i|, |d_i^2-b_i| \le \Psi(i^{-1})$. On the other hand, writing
  the equation for $X$ in terms of $z_i, \mathbf{x}_i$ gives
  \[
    |d_i|^{-p}, |d_i^{-2}V_i|, |A_i^TA_i - Id| \le \Psi(i^{-1}).
  \]
  Using these we can modify the embeddings $\tilde{F}_i$ by some
  $g_i \in GL(6)$ with $|g_i - Id| \le \Psi(i^{-1})$ so that
  $\mathbf{x}_i = (A_i-b_i^{-1}V_iW_i^T)\mathbf{x}$. Since
  $A_i-b_i^{-1}V_iW_i^T$ converges to an orthogonal matrix, by further
  modifying the embeddings by linear transformations close to
  identity, we have $\mathbf{x}_i = A_i' \mathbf{x}$ with
  $A_i' \in O(3)$. We now drop the first and the third components of
  $\tilde{F}_i$ and obtain embeddings $F_i = (z_i, \mathbf{x}_i)$ into
  $\mathbf{C}^4$, whose image is given by
  $d_i^{-p}z^p + \mathbf{x}_i^T\mathbf{x}_i = 0$. Set
  $a_i = d_i^{-p}$. By applying scalings
  $(z, \mathbf{x}) \to (cz, c^{\frac{n-1}{n-2}}\mathbf{x})$ with some
  $|c| = 1$, we can assume that $a_i > 0$. So we have proved (1) and
  (2). The rest follows verbatim the proof of
  Proposition~\ref{prop:spec1}.
\end{proof}

\begin{proof}[Proof Theorem~\ref{thm:main2}]
  Proposition~\ref{prop:spec2} shows that $\omega_Z$ can be approximated by
  $\mathcal{F}$ constructed in Lemma~\ref{lemma:unstablemodel}. The
  rest of the proof is very similar to the proof of
  Theorem~\ref{thm:main}, so we omit it.
\end{proof}

\section{Uniqueness of Calabi-Yau metrics under small perturbation}
\label{sec:max}

In this section we prove Theorem~\ref{thm:main3}, which says that
polynomially subquadratic perturbation of a $\ddbar$-exact Calabi-Yau
metric with maximal volume growth must be trivial. Recall that $X$ is
said to have maximal volume growth if there exists $v> 0$ such that
for all $p \in X$ and $r>0$, we have
$\operatorname{Vol}(B(p,r)) \ge vr^{2n}$. It was proved in
\cite{LSzII} that tangent cones at infinity of a Calabi-Yau manifold
with maximal volume growth is an affine variety. It was also observed
in \cite[Section~3.1]{Sz20} that Donaldson-Sun theory extends to the
$\ddbar$-exact case. In particular the tangent cone at infinity is
unique. To prove Theorem~\ref{thm:main3}, we need the following decay
estimate. For the following, let $o \in X$ be a fixed point, and
write $B(o, r)$ for the $r$-ball in $X$ with respect to the rescaled
metric $c^2\omega$, where $0<c \ll 1$.

\begin{lemma}
  \label{lemma:cydecay}
  For any $\alpha > 0$ sufficiently small, there exists a constant
  $\lambda_0 > 0$ such that if $\lambda < \lambda_0$ and
  $\epsilon > 0$ is sufficiently small (depending on $\lambda$), then
  we have the following. Suppose that
  \[ d_{GH}(B(o, \epsilon^{-1}), B(0, \epsilon^{-1})) < \epsilon, \]
  where $B(0,\epsilon^{-1})$ is the corresponding ball in the tangent
  cone $C(Y)$.  Suppose $u$ is a smooth function on $B(o,1)$ with
  $\sup_{B(o,1)} |u| < \epsilon$ satisfying
  \[ (\omega + \ddb u)^n = \omega^n. \]
  Then we can find a smooth function $u'$ on $B(o,1/2)$ such that
  \begin{enumerate}
  \item $\ddbar (u - u') = 0$,
  \item $\sup_{B(o,\lambda)} |u'| \le \lambda^{2-\alpha} \sup_{B(o,1)} |u|$.
  \end{enumerate}
\end{lemma}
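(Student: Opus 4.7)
The plan is a compactness-contradiction argument modeled closely on the proof of Proposition~\ref{prop:decay}, but simplified since we work on a fixed manifold $X$ with no family of model metrics to adjust: the pluriharmonic lift that makes $u-u'$ pluriharmonic plays the role of the automorphism adjustment in the earlier argument.

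Suppose the lemma fails, so there exist sequences $\epsilon_i \to 0$ and corresponding $u_i$ with $\kappa_i = \sup_{B(o,1)}|u_i| \le \epsilon_i$, satisfying the Monge-Amp\`ere equation on the unit ball in $c_i^2\omega$ which is $\epsilon_i$-Gromov-Hausdorff close to $B(0,1)\subset C(Y)$. Normalize $\tilde u_i = u_i/\kappa_i$. By Lemma~\ref{lemma:savin} we obtain uniform $C^{k,\alpha}$ bounds for $\tilde u_i$ on compact subsets of the regular set, and, expanding the Monge-Amp\`ere equation as in \eqref{eq:eqnexpand}, a subsequential limit $h$ is harmonic with respect to the cone metric $\omega_{C(Y)}$. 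Since $|h|\le 1$ and the singular set has complex codimension at least two, $h$ extends as a bounded harmonic function across the singular set.

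Apply Lemma~\ref{lemma:HS} to decompose $h = h_{ph} + h_{aut} + h_{>2}$, where $h_{ph}$ collects all pluriharmonic homogeneous components of degree $\le 2$, $h_{aut}$ is the $2$-homogeneous $J(r\partial_r)$-invariant component, and $h_{>2}$ collects the strictly higher-degree components. Since the spectrum of homogeneous harmonic functions on the Ricci-flat cone $C(Y)$ is discrete there is $\alpha' > 0$ depending only on $C(Y)$ with $\sup_{B(0,r)}|h_{>2}| \le Cr^{2+\alpha'}$, while $\sup_{B(0,r)}|h_{aut}|\le Cr^2$ by $2$-homogeneity. Since $h_{ph}$ is pluriharmonic of at most quadratic growth, it is the real part of a holomorphic polynomial on the ambient $\mathbf{C}^N$ in which $C(Y)$ is embedded by $L^2$-orthonormal homogeneous functions. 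Using the extension of Donaldson-Sun theory to $\partial\bar\partial$-exact Calabi-Yau manifolds with maximal volume growth (as in Section~\ref{sec:max} and \cite[Section~3.1]{Sz20}), for sufficiently large $i$ we have holomorphic embeddings $F_i: B(o,1) \to \mathbf{C}^N$ converging, in the Gromov-Hausdorff sense, to the standard embedding of $B(0,1)\subset C(Y)$. Pulling back the holomorphic polynomial for $h_{ph}$ by $F_i$ and taking the real part produces a pluriharmonic function $H_i$ on $B(o,1)$ with $\kappa_i^{-1}H_i \to h_{ph}$ uniformly under Gromov-Hausdorff approximation.

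Set $u'_i = u_i - H_i$. Then $\ddbar(u_i - u'_i) = 0$, and $\ddb H_i = 0$ implies $u'_i$ still solves $(\omega + \ddb u'_i)^n = \omega^n$. On compact subsets of the regular set, $\kappa_i^{-1}u'_i$ converges smoothly to $h_{aut} + h_{>2}$, which satisfies $\sup_{B(0,\lambda)}|h_{aut} + h_{>2}| \le C\lambda^2$. Rescale $B(o,\lambda)$ to unit size, writing $U_i = \lambda^{-2}u'_i$ in the rescaled metric; the equation becomes $(\omega' + \ddb U_i)^n = (\omega')^n$, so $f = 0$. Smooth convergence on the regular set gives $\sup_{\{r_h>\delta\}} |U_i| \le C\kappa_i + \Psi(i^{-1})$, while trivially $\sup|U_i| \le C\lambda^{-2}\kappa_i$. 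Applying Theorem~\ref{thm:nonconc} with $\gamma = \lambda^{2+\alpha}$ absorbs the crude bound, yielding $\sup_{B(o,\lambda)}|u'_i| \le C\lambda^2\kappa_i$ after scaling back. For $\lambda$ small enough relative to $\alpha$ and $C$, this is bounded by $\lambda^{2-\alpha}\kappa_i$, contradicting our assumption.

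The main obstacle will be the rigorous construction of the approximating pluriharmonic functions $H_i$ via the Donaldson-Sun-type embeddings in the $\partial\bar\partial$-exact setting, and verifying that the convergence $\kappa_i^{-1}H_i \to h_{ph}$ is uniform enough to feed into the non-concentration step. A subtler bookkeeping issue is to ensure the spectral gap $\alpha' > 0$ above degree two (which requires the discreteness of the homogeneous spectrum on a Calabi-Yau cone) and to balance the choices of $\gamma, \delta, \epsilon$ so that the rescaled application of Theorem~\ref{thm:nonconc} produces the desired $\lambda^2$ decay rather than a rate that degrades with $\lambda$.
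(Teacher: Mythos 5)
Your argument is essentially the paper's (which defers to \cite[Proposition~4.1]{Sz20}): normalize by $\kappa_i$, pass to a harmonic limit on the cone, remove the low-degree pluriharmonic part by pulling back holomorphic functions through the Donaldson--Sun-type embeddings available in the $\ddbar$-exact setting, use the spectral gap for the higher modes, and propagate the decay across the singular set with Theorem~\ref{thm:nonconc}; your observation that the automorphism piece $h_{aut}$ need not be subtracted because the target rate $2-\alpha$ is subquadratic is exactly the simplification the paper highlights. Subtracting the degree-two pluriharmonic components as well is harmless but not needed, and the only small bookkeeping point is to run the non-concentration step at a slightly larger scale (say $2\lambda$) so the final estimate holds on $B(o,\lambda)$ rather than on a half-sized ball.
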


\begin{proof}
  The proof is very similar to the proof of
  \cite[Proposition~4.1]{Sz20}, so we omit it. We note that the decay
  rate in (2) is slower than quadratic. Thus for this result we only
  need to subtract ``subquadratic'' harmonic functions from $u$ and
  automorphisms of the cone do not enter the argument. The
  $\ddbar$-exactness is required to apply Theorem~\ref{thm:nonconc},
  and  to embed the manifold $X$ as an
  affine variety in $\mathbf{C}^N$. This in turn is required to employ the fact that
  subquadratic harmonic functions on the cone extend to pluriharmonic
  functions on the manifold.
\end{proof}

\begin{proof}[Proof of Theorem \ref{thm:main3}]
  We scale down the metric. Let $\omega_i = 2^{-2i} \omega$, and let
  $u_i = 2^{-2i}u$. Denote $B(o_i, 1)$ the unit ball with respect to the
  scaled-down metric $\omega_i$. Let $i_0$ be large enough so that
  \[ \sup_{B(o_i,1)} |u_i| \le 2^{-2i}C(1+ 2^{-i})^{2-\delta} \le
    C'2^{-i\delta} < \epsilon, \]
  and that
  \[
    d_{GH}(B(o_i,\epsilon^{-1}), B(o,\epsilon^{-1})) < \epsilon
  \]
  for $i > i_0$, where $\epsilon$ is given in
  Lemma~\ref{lemma:cydecay}. Let $\alpha > 0$ be
  sufficiently small as in Lemma~\ref{lemma:cydecay}. In particular we
  also want $\alpha < \delta$. Then we can apply
  Lemma~\ref{lemma:cydecay}. We may set $\lambda = 2^{-m}$, where
  $m>0$ an sufficiently large integer. Let $i = i_0 + km$, where
  $k > 0$ is an integer. Then by Lemma~\ref{lemma:cydecay}, there
  exists a smooth function $u'$ on $B(o_i, 1/2)$ such that
  $\ddbar (u_i-u') = 0$ and
  $\sup_{B(o_i,\lambda)} |u'| \le \lambda^{2-\alpha} \sup_{B(o,1)}
  |u_i|$. Set $u'_{i-1}$ = $\lambda^{-2} u'$. Note that
  \[B(o_i, \lambda) = B(o_{i-m},1) = B(o_{i_0+(k-1)m},1).\]
  So we have
  \[ \sup_{B(o_{i_0+(k-1)m},1)} |u'_{i-1}| \le \lambda^{-\alpha}
    \sup_{B(o_i,1)} |u_i| \le 2^{m\alpha-km\delta} C'2^{-i_0\delta} <
    \epsilon. \] We can then iterate this process $k$ times. In the end,
  we have a function $u'_{i_0}$ on $B(o_{i_0},1)$ with
  \[ \sup_{B(o_{i_0},1)} |u'_{i_0}| \le 2^{km(\alpha-\delta)} C'
    2^{-i_0\delta}\] Rescaling back, we now have a smooth function
  $v_k = 2^{2i_0} u'_{i_0}$ satisfying
  \[ (\omega+ \ddb v_k)^n = \omega^n \]
  on $B(o, 2^{i_0})$ such that
  $\ddbar (u - v_k) = 0$ and
  \[ \sup_{B(o_,2^{i_0})} |v_k| \le 2^{km(\alpha-\delta)} C'.\] By
  Lemma~\ref{lemma:savin}, up to passing to a subsequence $v_k$
  converges uniformly in $C^\infty$ to $0$ as $k\to\infty$. It follows
  that $\ddbar u = 0$ on $B(o, 2^{i_0})$. We can then increase $i_0$
  and conclude that $\ddbar u = 0$ on $X$.
\end{proof}

We remark that the $\ddbar$-exactness condition is not required when
the tangent cone at infinity has a smooth link (and hence is unique by
Colding-Minicozzi~\cite{CM}). In this case one can show
Lemma~\ref{lemma:cydecay} using the existence of adapted sequences of
bases for harmonic functions with polynomial growth (see for example
\cite[4.2.2]{ChiuThesis}) and the maximum principle for the complex
Monge-Amp\`ere equation. While the setup in this case is closer to the
asymptotically conical case considered in Conlon-Hein~\cite{CH}, this
approach has the advantage that the polynomial convergence to the
tangent cone at infinity is not required. It would be interesting to
know if a version of the $\ddbar$ lemma holds in the setting of
maximal volume growth, which would enable us to prove results on the
level of metrics similar to \cite[Theorem~3.1]{CH} as opposed to
potentials.

\end{document}